\newbox\tr@tto
\def\medint{\displaystyle\copy\tr@tto\kern-10.4pt\int}
\def\Xint#1{\mathchoice
   {\XXint\displaystyle\textstyle{#1}}%
   {\XXint\textstyle\scriptstyle{#1}}%
   {\XXint\scriptstyle\scriptscriptstyle{#1}}%
   {\XXint\scriptscriptstyle\scriptscriptstyle{#1}}%
   \!\int}
\def\XXint#1#2#3{{\setbox0=\hbox{$#1{#2#3}{\int}$}
     \vcenter{\hbox{$#2#3$}}\kern-.5\wd0}}
\def\dashint{\Xint-}
\newcommand{\R}{{\mathbb R}}
\newcommand{\kk}{{\tilde k}}
\newcommand{\N}{{\mathbb N}}
\newcommand{\Z}{{\mathbb Z}}
\newcommand{\e}{{\varepsilon}}
\newcommand{\al}{{\alpha}}
\newcommand{\Le}{{\mathscr L}}
\newcommand{\GG}{{\mathscr G}}
\newcommand{\E}{\mathop{\rm Ent}}
\newcommand{\M}{{\mathcal  M}}
\newcommand{\Be}{{\mathcal  B}}
\renewcommand{\H}{{\mathcal H}}
\newcommand{\rank}{{\mathrm{rank\,}}}
\newcommand{\bg}{{\bar g}}
\newcommand{\p}{{p_\circ}}
\renewcommand{\b}{{q_\circ}}
\renewcommand{\t}{\tau}
\renewcommand{\tt}{\tau_*}
\newcommand{\loc}{{\rm loc}}
\newcommand{\meas}{\mathop{\rm meas}}
\newcommand{\diam}{\mathop{\rm diam}}
\newcommand{\LL}{\mathrm{L}}
\newcommand{\WW}{\mathrm{W}}
\newcommand{\CC}{\mathrm{C}}
\newcommand{\Cc}{C^{k,\alpha}}
\newcommand{\Ll}{\Le^{k+\al}_{p}}
\newcommand{\Lb}{\mathcal L}
\newcommand{\Lll}{\Le^{k+\al}_{p,1}}
\newcommand{\LK}{\Le^{\al}_{p}}
\newcommand{\LLL}{\Le^{\al}_{p,1}}
\newcommand{\BV}{\mathrm{BV}}
\newtheorem{ttt}{\bf Theorem}[section]
\newtheorem{lem}{\bf Lemma}[section]
\newtheorem{cor}{\bf Corollary}[section]
\theoremstyle{remark}
\newtheorem{rem}{\bf Remark}[section]
\numberwithin{equation}{section}
\newcommand{\dd}{{\rm d}}
\title{ON SOME UNIVERSAL MORSE--SARD TYPE THEOREM}
\author{Adele Ferone, \, Mikhail V.~Korobkov,  \, and \, Alba Roviello}
\begin{document}

\maketitle
\hfill{\it Dedicated to the bright memory of Jean Bourgain,  who inspired this area of research}
\begin{abstract}
\medskip
The classical Morse--Sard theorem claims  that for a mapping $v
\colon \R^n\to\R^{m+1}$ of class $\CC^k$ the measure of critical
values $v(Z_{v,m})$ is zero under condition $k
\ge n-m$. Here the critical set, or
$m$-critical set is defined as $Z_{v,m} = \{ x \in \R^n : \, \rank
\nabla v(x)\le m \}$. Further Dubovitski\u{\i}   in 1957 and
independently Federer and Dubovitski\u{\i}  in 1967 found some
elegant extensions of this theorem to the case of other (e.g.,
lower) smoothness assumptions. They also established the sharpness
of their results within the $\CC^k$ category.

Here we formulate and prove a \textit{bridge theorem} that
includes all the above results as particular cases: namely, if a
function $v:\R^n\to\R^d$ belongs to the Holder class
$C^{k,\alpha}$, $0\le\al\le1$, then for every $q>m$ the identity
$$\H^{\mu}(Z_{v,m}\cap v^{-1}(y))=0$$ holds for $\H^q$-almost
all~$y\in\R^d$, where $\mu=n-m-(k+\alpha)(q-m)$.

Intuitively, the sense of this bridge theorem is very close to
{\it Heisenberg's uncertainty principle} in theoretical physics:
the more precise is the information we receive on measure of the image of the critical set, the less precisely the preimages are described, and vice versa.

The result is new even for the classical $C^k$-case (when
$\al=0$\,); similar result is established for the Sobolev classes
of mappings $W^k_p(\R^n,\R^d)$ with minimal integrability
assumptions $p=\max(1,n/k)$, i.e., it guarantees in general only
{\it the continuity} (not everywhere differentiability) of
a~mapping.
However, using some $N$-properties for Sobolev mappings, established in our previous paper,  we obtained that the sets of
nondifferentiability points of Sobolev mappings are fortunately
negligible in the above bridge theorem. We cover also the case of fractional Sobolev spaces.

The proofs of the most results are based on our previous joint
papers with J.~Bourgain and J.~Kristensen  (2013, 2015). We also
crucially use very deep Y.~Yomdin's entropy estimates of near
critical values for polynomials (based on algebraic geometry
tools).
\medskip

\noindent {\bf MSC 2010:} { 58C25 (26B35 46E30)}

\noindent {\bf Key words:} {\it Sobolev--Lorentz mappings, Bessel potential spaces, Holder
mappings, Morse--Sard theorem,
Dubovitski\u{\i}--Federer theorems}

\end{abstract}


\section{Introduction}\label{Introd}
The Morse--Sard theorem in its classical form states that the
image of the set of critical points of a $\CC^{n-d+1}$ smooth
mapping $v\colon \R^n \to \R^{d}$ has zero Lebesgue measure in
$\R^{d}$. More precisely, assuming that $n \geq d$, the set of
critical points for $v$ is $Z_v=\{x\in\R^n\,:\,\rank \nabla
v(x)<d\}$ and the conclusion is that
\begin{equation}\label{classical}
\Lb^{d}(v(Z_v))=0.
\end{equation}
The theorem was proved by Morse~\cite{Mo} in the case $d=1$ and
subsequently by Sard~\cite{S} in the general vector--valued case.
The celebrated results of Whitney~\cite{Wh} show that the
$\CC^{n-d+1}$ smoothness assumption on the mapping $v$ is sharp.
However, the following result gives valuable information also for
less smooth mappings. \vspace{2mm}

\noindent {\bf Theorem A
(Dubovitski\u{\i}~1957~\cite{Du})}.\label{AA} {\sl Let $n,d,k \in
\N$, and let
 $v \colon \R^n\to\R^{d}$ be a
$\CC^k$--smooth mapping. Put $\nu=n-d-k+1$. Then
\begin{equation}\label{dub1}
\H^\nu (Z_{v}\cap v^{-1}(y))=0\qquad\mbox{ for a.a. }y\in\R^{d},
\end{equation}
where  $\H^\nu$ denotes the $\nu$--dimensional  Hausdorff measure.} \vspace{2mm}

Here and in the following we interpret $\H^\beta$ as the counting
measure when $\beta\le0$. Thus for $k\ge n-d+1$ we have $\nu
\le0$, and $\H^\nu$ in (\ref{dub1}) becomes simply the counting
measure, so the Dubovitski\u{\i} theorem contains the Morse--Sard
theorem as particular case\footnote{It is interesting to note that
this first Dubovitski\u{\i} theorem remained almost unnoticed by
West mathematicians for a long time; another proof was given in
the recent paper~\cite{BHS} covering also some extensions to the
case of H\"{o}lder spaces; see also~\cite{HZ} for the Sobolev
case.}.

A few years later and almost simultaneously,
Dubovitski\u{\i}~\cite{Du2} in 1967 and Federer \cite[Theorem
3.4.3]{Fed} in 1969\footnote{Federer announced~\cite{Fed66} his
result in 1966, this announcement (without any proofs) was sent
on~08.02.1966. For the historical details, Dubovitski\u{\i} sent
his paper~\cite{Du2} (with complete proofs) a month earlier,
on~10.01.1966.} published another important generalization of the
Morse--Sard theorem. \vspace{2mm}

\noindent {\bf Theorem B (Dubovitski\u{\i}--Federer)}.\label{BB}
{\sl For $n,k,d \in \N$ let $m\in\{0,\dots,\min (n,d)-1\}$ and
 $v \colon \R^n\to\R^d$ be a
$\CC^k$--smooth mapping. Put
$q_\circ=m +\frac{n-m }k$. Then
\begin{equation}\label{fed1}
\H^\b(v(Z_{v,m}))=0,
\end{equation}
where $Z_{v,m}$ denotes the set of $m$--critical points of $v$
defined as
$$Z_{v,m}=\{x\in\R^n\,:\,\rank \nabla v(x)\le m\}.$$} \vspace{2mm}

In 2001 Moreira 
\cite{CGTAM} extend the last result to the Holder class~$C^{k,\al}$, i.e., i.e. he proved that for a~mapping $v\in C^{k,\al}(\R^n,\R^d)$ the equality 
(\ref{fed1}) holds with~$\b=m +\frac{n-m}{k+\al}$. 

In view of the wide range of applicability of the above results it
is a natural and compelling problem to extend the results to the classes of Sobolev mappings. 

In the recent paper \cite{HKK} by Haj\l{}asz P., Korobkov M.V., and Kristensen J.
for $k\le n$ and for Sobolev
classes $W^k_p(\R^n,\R^d)$ it was  proved a~\textit{bridge theorem}
that includes Theorems~A--B  as particular cases (see below
Theorem~\ref{DFT-F}-(ii)). In the present paper we extend this result
for the Holder classes $\Cc$ and for Sobolev spaces
$W^k_p(\R^n,\R^d)$ with arbitrary integer~$k\ge1$, and also for
fractional Sobolev spaces $\Le^{k+\al}_p$ 
(e.g., for Bessel potential spaces; see Theorems~\ref{DST-H}--\ref{DFT-F}\,).

The integrability assumptions here are very minimal and sharp, they are of kind \linebreak
$p(k+\al)\ge n$, i.e., they
guarantee in general only
{\it the continuity} (not everywhere differentiability) of
a~mapping. However, we proved that the 'bad' set of nondifferentiability points of Sobolev mappings is fortunately
negligible in the above bridge theorem (see Theorem~\ref{DFT-F-negl}\,) because of some Luzin type $N$--properties with respect to lower dimensional
Hausdorff measures established in our previous papers~\cite{BKK2,FKR-n,KK15}.

Let us note, in the conclusion, that the Morse--Sard theorem for the Sobolev spaces was very fruitful in mathematical fluid mechanics, in particular,
it was used in the recent  solution of the so-called Leray's problem for the steady Navier--Stokes system (see~\cite{KPR}\,).

\subsection{Bridge F.-D.-theorems for the Holder classes of mappings}

We  say that a~mapping $v:\R^n\to\R^d$ belongs to the class $\Cc$
for some positive integer $k$ and $0<\alpha\le1$ if there exists
a~constant $L\geq 0$ such that
\begin{equation*}\label{hic1}\mbox{$|\nabla^kv(x)-\nabla^kv(y)|\le
L\,|x-y|^\alpha$ \qquad for all $x,y\in\R^n$.}
\end{equation*}

To simplify the notation, let us make the following agreement: for
$\alpha=0$ we identify~$C^{k,\alpha}$ with usual spaces of 
$C^k$-smooth mappings. The following theorem is one of the main
results of the paper.

\begin{ttt}\label{DST-H}{\sl Let
$m\in\{0,\dots,n-1\}$, \,$k\ge0$, \,$0\le\al\le1$, \,$k+\al\ge1$, \,$ d>m$, \,and
\,$v\in \Cc(\R^n,\R^d)$. Then for any $q\in (m ,\infty)$ the
equality
\begin{equation*}\label{hf1}
\H^{\mu_q}(Z_{v,m}\cap v^{-1}(y))=0\qquad\mbox{ for \ $\H^q$-a.a.
}y\in\R^d
\end{equation*}
holds, where
\begin{equation*}\label{hf2}\mu_q=n-m -(k+\alpha)(q-m ),
\end{equation*} and
$Z_{v,m}$ denotes the set of $m$-critical points of~$v$:\ \
$Z_{v,m}=\{x\in\R^n\,:\,\rank\nabla v(x)\le m \}$.}
\end{ttt}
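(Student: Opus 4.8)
The plan is to reduce the statement to the already-known Hölder version of Theorem B, applied to restrictions of $v$ to generic affine subspaces, and to combine this with an integral-geometric (coarea/Eilenberg-type) inequality. The key observation is that Theorem~\ref{DST-H} interpolates between two endpoints: at $q=m-1$ it degenerates, while at $q=\b:=m-1+\frac{n-m+1}{k+\al}$ the exponent $\mu_q$ equals $0$, and the statement becomes precisely Moreira's Hölder extension of the Dubovitski\u\i--Federer Theorem B (the set $v(Z_{v,m})$ has $\H^\b$-measure zero, so for $\H^\b$-a.a.\ $y$ the fibre $Z_{v,m}\cap v^{-1}(y)$ is empty, i.e.\ has zero counting measure). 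For intermediate $q$ the idea is a slicing argument: one should intersect $\R^n$ with affine planes of an appropriate dimension, apply the endpoint result on each plane, and then integrate back using the relation between Hausdorff measures on slices and the ambient Hausdorff measure.

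Concretely, I would proceed as follows. First, fix $q\in(m-1,\b)$ (the range $q\ge\b$ gives $\mu_q\le0$ and is covered directly by Moreira's theorem, reading $\H^{\mu_q}$ as counting measure). Write $q=m-1+t$ with $t\in(0,\frac{n-m+1}{k+\al})$, and choose an integer $\ell$ with $m-1+\frac{\ell-m+1}{k+\al}$ straddling $q$ in the right way; more precisely, pick the dimension of the slicing planes so that the restriction $v|_P$ of $v$ to a generic $\ell$-plane $P$ is again of class $\Cc$ and its $m$-critical set satisfies $Z_{v|_P,m}\subseteq Z_{v,m}\cap P$, with the endpoint exponent for $v|_P$ matching the target. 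Second, on each such plane $P$, apply the Hölder Theorem~B (Moreira) to $v|_P\colon P\cong\R^\ell\to\R^d$ to get that $\H^{\mu'}\big(Z_{v|_P,m}\cap (v|_P)^{-1}(y)\big)=0$ for $\H^{q'}$-a.a.\ $y$, with matching exponents. Third, integrate over the Grassmannian of planes (or over a family of parallel translates) using a Fubini/integral-geometry inequality for Hausdorff measures — roughly, $\int \H^{\mu_q}\big(A\cap P\big)\,d\gamma(P)$ controls $\H^{\mu_q+\operatorname{codim}}(A)$-type quantities — to pass from the slice-wise vanishing to the ambient statement, and likewise handle the ``for a.a.\ $y$'' quantifier via Fubini in the target.

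I would expect the main obstacle to be the bookkeeping of exponents and the integral-geometric step rather than any single hard inequality: one must verify that the slicing dimension can be chosen (possibly non-integer interpolation forces a further averaging argument, since $q$ ranges over a continuum while plane dimensions are integers — so likely one slices to reduce from $\H^q$-measure statements to $\H^{\lceil q\rceil}$ or uses a two-sided squeeze between consecutive integer slice dimensions), and that the coarea-type inequality linking $\H^{\mu_q}$ on slices to the conclusion holds with the correct constants and in the correct direction for \emph{upper} Hausdorff measure. A second, more technical point is the behaviour of the $\Cc$ norm under restriction to affine subspaces — this is elementary since $\nabla^k(v|_P)$ is a restriction of $\nabla^k v$, but one must make sure the Hölder exponent $\al$ is preserved and that the critical-set inclusion $Z_{v|_P,m}\subseteq Z_{v,m}\cap P$ is exact, using that $\rank\nabla(v|_P)(x)\le\rank\nabla v(x)$.

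Alternatively — and this may in fact be the route the authors take given the emphasis in the abstract on Yomdin's entropy estimates — one can prove Theorem~\ref{DST-H} directly by a covering argument: decompose $Z_{v,m}$ into pieces on which $v$ is close to a polynomial of degree $\le k$ (Taylor approximation, using the $\Cc$ hypothesis to control the remainder by $O(r^{k+\al})$ on a cube of side $r$), then on each such piece use Yomdin's bound on the number of $\e$-balls needed to cover the near-critical values of that polynomial to estimate simultaneously the $\H^q$-size of the relevant set of values $y$ and, for each such $y$, the $\H^{\mu_q}$-size of the fibre. The trade-off inequality $\mu_q+(k+\al)(q-m+1)=n-m+1$ is exactly the dimension count that makes the two-scale covering close: the number of small cubes of side $\e^{1/(k+\al)}$ meeting a fibre, times $\e^{q-m+1}$ (the measure scale in the target), balances against $\e^{(n-m+1)/(k+\al)}$, the volume scale needed to fill a near-critical tube. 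The hard part in this direct approach is making Yomdin's polynomial entropy estimate uniform over the decomposition and summing the error terms without losing the sharp exponent; this is presumably where the previous joint work with Bourgain and Kristensen is invoked as a black box.
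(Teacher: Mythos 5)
Your first route (slicing by affine planes, applying Moreira's H\"older Theorem~B on each slice, then integrating over the Grassmannian) has a gap that I do not see how to repair: the integral-geometric inequality you invoke goes in the wrong direction. The standard estimate bounds $\int \H^{s}(A\cap P)\,d\gamma(P)$ \emph{from above} by an ambient Hausdorff measure of $A$, so ambient nullity implies slice-wise nullity, but not conversely; from the vanishing of $\H^{\mu'}$ on almost every slice one cannot conclude that the ambient fibre has zero $\H^{\mu_q}$-measure. In addition, the quantifier exchange (``for a.e.\ plane $P$, for $\H^{q'}$-a.a.\ $y$'' versus ``for $\H^{q}$-a.a.\ $y$, for a.e.\ $P$'') would require a Fubini theorem for the non-$\sigma$-finite measures $\H^{q'}$ on the target, which you do not supply, and the mismatch between the continuum of exponents $q$ and integer slice dimensions is flagged but not resolved. (A minor point: the inclusion you actually get from $\rank\nabla(v|_P)\le\rank\nabla v$ is $Z_{v,m}\cap P\subseteq Z_{v|_P,m}$, the reverse of what you wrote; fortunately that is the direction one would need.)

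Your alternative sketch is indeed the paper's starting point: Taylor approximation on a cube of side $r$ with remainder $O(r^{k+\al})$, Yomdin's entropy bound (Theorem~\ref{lb8}) for the near-critical values of the Taylor polynomial, and the Fubini-type reduction (Theorem~\ref{FubN}) to the set function $\Phi$. But the dimension-balance computation you describe only yields the single-cube estimate $\Phi(Z_{v,m}\cap Q)\le C\,\ell(Q)^{\,q+\mu_q+(k+\al-1)(q-m+1)}=C\,\ell(Q)^n$ (Theorem~\ref{Hold-est-1}); summing this over a covering gives \emph{finiteness} of the relevant pre-measure, not the required vanishing, and this is where your argument stops short. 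The paper's genuinely new step, which your proposal is missing, is the rank-reduction via the coarea formula: for $m\ge2$, finiteness of $\H^{\mu_q}$ on $\H^q$-a.a.\ fibres (obtained from the cube estimate through the set function $\Psi$) combined with $\mu_q<n-m+1$ forces both sides of the coarea identity of Theorem~\ref{D-Coarea} (applied with $m'=m-1$) to vanish, so off a Lebesgue-null set the critical points actually satisfy $\rank\nabla v\le m-2$; re-running the cube estimate with $m-1$ in place of $m$ produces a strictly smaller exponent $\tilde\mu=\mu_q-(k+\al-1)$, and finiteness at the smaller exponent forces $\Psi$, hence $\Phi$, to vanish. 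The case $m=1$ needs a separate density-point refinement ($\nabla^k v$ vanishes a.e.\ on the critical set, upgrading the cube bound to $o(r^n)$). Also, the difficulty is not ``making Yomdin's estimate uniform'' (its constant already depends only on $n,d,k,m$) nor anything imported from the Bourgain--Kristensen papers as a black box; it is precisely this passage from finiteness to nullity that your proposal does not address.
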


Let us note, that for the classical $C^k$-case, i.e., when
$\alpha=0$, the behaivior of the function $\mu_q$ is very natural:
\begin{equation*}\label{dub3-q}
\begin{array}{lcr}
\mu_q=0\qquad\mbox{ for }q=\b=m +\frac{n-m }k\ \ \qquad\mbox{(Dubovitski\u{\i}--Federer Theorem~B)};\\
[13pt]
\mu_q<0 \qquad\mbox{ for }q>\b\ \ \ \qquad\mbox{[ibid.]};\\
[13pt] \mu_q=\nu \qquad\mbox{ for }q=m+1\ \ \qquad\mbox{(Dubovitski\u{\i} Theorem~A)}; \\
[13pt]\mu_q=n-m \qquad\mbox{ for }q=m .\end{array}
\end{equation*}
The last value cannot be improved in view of the trivial example
of a linear mapping $L\colon \R^n\to\R^d$ of rank $m $.

Thus, Theorem~\ref{DST-H} contains all the previous theorems
(Morse--Sard, A,\,B\, and even the Bates theorem for
$C^{k,1}$-Lipschitz functions~\cite{Bates}) as particular cases.

Intuitively, the sense of the~Bridge Theorem~~\ref{DST-H} is very
close to the {\it Heisenberg's uncertainty principle} in
theoretical physics: the more precisely information we received on
measure of the image of the critical set, the less precisely the
preimages are described, and vice versa.

\begin{rem}\label{bhs}
As we mentioned before, for $q=\b=m +\frac{n-m }{k+\alpha}$ and $\mu_q=0$ (as in the Dubovitski\u{\i}-Federer Theorem~B\,) the assertion
of Theorem~\ref{DST-H} was proved in 2001 in the paper of Moreira~\cite{CGTAM}. \,For the minimal rank value $m=0$ (i.e., when the gradient totally vanishes on the critical set) and
$q=\b=\frac{n}{k+\alpha}$, \,$\mu_q=0$,  the assertion of
Theorem~\ref{DST-H} was proved by Kucera~\cite{Kucera} in 1972. 
Further, for partial case $q=m+1=d$ (as in the Dubovitski\u{\i}
theorem~A\,)  and under additional assumption that
\begin{equation}\label{u-ass}
\mbox{$|\nabla^kv(x)-\nabla^kv(y)|\le\omega\bigl(|x-y|\bigr)\cdot
|x-y|^\alpha$ \ \ with \ $\omega(r)\to0$ \ as $r\to0$},
\end{equation}
the assertion
of Theorem~\ref{DST-H} was proved in the paper  Bojarski~B. et al.~\cite{BHS} in 2005. 
Under the same asymptotic assumption~(\ref{u-ass})  the above Moreira result (i.e., when $q=\b$,
\,$\mu_q=0$\,)   was proved by Yomdin in the
paper~\cite{Yom} in 1983. 
\end{rem}

\medskip

\subsection{Bridge F.-D.-theorems for mappings of Sobolev and fractional Sobolev spaces}\label{bst}

Let $k\in\N$, $1<p<\infty$  and $0\le \alpha<1$. One of the most natural type of
 fractional Sobolev spaces  is
{\it (Bessel) potential spaces} $\Le^{k+\alpha}_p$. (They are  Sobolev analog of classical Holder classes $\Cc$.\,)

Recall, that a~function $v:\R^n\to\R^d$ belongs to the
space $\Le^{k+\alpha}_p$, if it is a convolution of a function~$g\in L_p(\R^n)$ with the Bessel
kernel~$G_{k+\alpha}$, where
$\widehat{G_{k+\alpha}}(\xi)=(1+4\pi^2\xi^2)^{-(k+\alpha)/2}$. It is well known that for the integer exponents (i.e., when $\al=0$) one has the identity
\begin{equation}\label{fN1} \Le^{k}_p(\R^n)=W^k_p(\R^n)\qquad\mbox{ if }\ \quad 1<p<\infty,
\end{equation}
where $W^k_p(\R^n)$ is the classical Sobolev space consisting of functions whose generalised derivatives up to order $\le k$ belongs to the Lebesgue space~$L_p(\R^n)$. 

As usual, if  $(k+\al)p>n$, then functions from the potential space $\Le^{k+\alpha}_p(\R^n)$ are continuous by Sobolev Theorem.
But if $(k+\al)p=n$, then functions from potential spaces~$\Le^{k+\alpha}_p(\R^n)$ are discontinuous in general. 
Thus for this limiting case we need to consider the Bessel--Lorentz potential space~$\Le^{k+\alpha}_{p,1}(\R^n)$ to have
the~continuity. Namely, $\Le^{k+\alpha}_{p,1}(\R^n,\R^d)$ denotes the space
of functions which could be represented as a~convolution of
the~Bessel potential~$G_{k+\al}$ with a function~$g$ from the Lorentz
space~$L_{p,1}$ (see the definition of these spaces in the
section~\ref{prel}). Similarly to~(\ref{fN1}),  
for the integer exponents (i.e., when $\al=0$) one has the identity
\begin{equation}\label{fN1-L} \Le^{k}_{p,1}(\R^n)=W^k_{p,1}(\R^n)\qquad\mbox{ if }\ \quad 1<p<\infty,
\end{equation}
where $W^k_{p,1}(\R^n)$ consists of all functions $v\in W^k_{p}(\R^n)$ whose partial derivatives of order~$k$ 
belongs to the~Lorentz
space~$\LL_{p,1}$ (see, e.g., ~\cite{FKR-n}\,). 

   \begin{ttt}\label{DFT-F}{\sl Let
$m\in\{0,\dots,n-1\}$, \,$k\ge1$, \,$d>m$, \,$0\le\al<1$, \,$p\ge1$
\,and \,let $v:\R^n\to \R^d$ be a~mapping
for which one of the following cases holds:

\begin{itemize}
\item[(i)] \,$\al=0$, \,$kp>n$, \,and \,$v\in W^k_p(\R^n,\R^d)$;

\item[(ii)] \,$\al=0$, \,$kp=n$, \,and \,$v\in W^k_{p,1}(\R^n,\R^d)$;

\item[(iii)] \,$0<\al<1$, \,$p>1$, \,$(k+\al)p>n$, \,and \,$v\in \Le^{k+\al}_{p}(\R^n,\R^d)$;

\item[(iv)] \,$0<\al<1$, \,$p>1$, \,$(k+\al)p=n$, \,and \,$v\in \Le^{k+\al}_{p,1}(\R^n,\R^d)$.
\end{itemize}
Then the mapping~$v$ is continuous and for any $q\in (m,\infty)$ the equality
\begin{equation*}\label{sob-hf1-hhh}
\H^{\mu_q}(Z_{v,m}\cap v^{-1}(y))=0\qquad\mbox{ for \ $\H^q$-a.a.
}y\in\R^d
\end{equation*}
holds, where again
\begin{equation*}\label{sob-hf2-hhh}\mu_q=n-m-(k+\alpha)(q-m),
\end{equation*} and
$Z_{v,m}$ denotes the set of $m$-critical points of~$v$:  
$Z_{v,m}=\{x\in\R^n\setminus A_v:\rank\nabla v(x)\le m\}$.
}
\end{ttt}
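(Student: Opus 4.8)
The plan is to reduce the Sobolev statements of Theorem~\ref{DFT-F} to the already‑established Hölder case, Theorem~\ref{DST-H}, by exhausting the Sobolev mapping (away from its ``bad set'' $A_v$ of non‑Lebesgue/non‑differentiability points) by pieces on which $v$ coincides with a $\Cc$‑mapping. The first step is to recall, in each of the four cases, the relevant Luzin‑type $N$‑property and the Calderón--Zygmund/Whitney‑type approximation: in the borderline cases (ii),(iv) the sharp Sobolev--Lorentz embedding $(k+\al)p=n$ together with the results of \cite{BKK2,FKR-n,KK15} gives that $v$ is continuous and that, outside a set $A_v$ which is $\sigma$‑finite with respect to a Hausdorff measure of appropriately small dimension, $v$ is differentiable with $v(Z_{v,m}\cap v^{-1}(y))$‑controlled behaviour; in the cases (i),(iii) where $(k+\al)p>n$ one simply has ordinary continuity by Sobolev embedding. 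In all four cases one can write $\R^n\setminus A_v=\bigcup_{j} E_j$ where on each (closed) set $E_j$ the restriction $v|_{E_j}$ extends to a global $\Cc(\R^n,\R^d)$‑mapping $v_j$ with $\nabla^i v_j=\nabla^i v$ on $E_j$ for $i\le k$; this is the standard Lusin‑type approximation for Bessel‑potential/Sobolev functions (Bojarski, Liu, Malý, Hajłasz, Swanson).

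Next I would apply Theorem~\ref{DST-H} to each $v_j$. For $\H^q$‑a.e.\ $y\in\R^d$ we get $\H^{\mu_q}\bigl(Z_{v_j,m}\cap v_j^{-1}(y)\bigr)=0$ simultaneously for all $j$ (a countable union of null sets of $y$ is still null). On $E_j$ the critical set of $v$ agrees with that of $v_j$ because the first‑order derivatives agree, so $Z_{v,m}\cap E_j\cap v^{-1}(y)\subseteq Z_{v_j,m}\cap v_j^{-1}(y)$, hence $\H^{\mu_q}$‑null for a.e.\ $y$. Summing over $j$,
\begin{equation*}
\H^{\mu_q}\Bigl(\bigl(Z_{v,m}\setminus A_v\bigr)\cap v^{-1}(y)\Bigr)=0\qquad\text{for }\H^q\text{-a.a. }y\in\R^d.
\end{equation*}
Since, by the very definition in Theorem~\ref{DFT-F}, $Z_{v,m}$ is taken inside $\R^n\setminus A_v$, this is exactly the claimed identity; no separate treatment of $A_v$ is needed here (that $A_v$ itself is negligible in the bridge theorem is the content of the later Theorem~\ref{DFT-F-negl}, which is not required for the present statement). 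The continuity assertion of the theorem is immediate: in (i),(iii) from the super‑critical Sobolev embedding, and in (ii),(iv) from the Sobolev--Lorentz embedding at the critical exponent (or, again, from \cite{FKR-n}).

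The main obstacle is the Lusin‑type approximation step in the fractional and borderline cases, i.e.\ producing the exhaustion $\R^n\setminus A_v=\bigcup_j E_j$ with $v|_{E_j}\in\Cc$ and matching $k$‑jets: for integer Sobolev spaces this is classical, but for Bessel‑potential spaces $\Le^{k+\al}_p$ one must use the pointwise characterisation of $\Le^{k+\al}_p$ via fractional maximal functions / sharp maximal functions of the $k$‑th derivative (Calderón, DeVore--Sharpley, Hajłasz--Kinnunen) to identify, off a small exceptional set, the points where a genuine $\Cc$ Taylor expansion of order $k$ holds, and then invoke a Whitney extension theorem in the class $\Cc$. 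A second, more delicate point specific to the borderline cases (ii),(iv) is to make sure that the exceptional set arising from this approximation can be absorbed into $A_v$ — that is, that the set of points where the $\Cc$‑jet fails is already among the non‑differentiability points covered by the $N$‑property results of \cite{BKK2,FKR-n,KK15}; checking this compatibility of exceptional sets is where the previous joint papers are used crucially, and where most of the technical care goes.
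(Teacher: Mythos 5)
Your overall strategy --- reduce the Sobolev/potential cases to the H\"older Theorem~\ref{DST-H} via Lusin-type $\Cc$-approximation --- is indeed the route the paper takes for cases (iii), (iv) and (after localization) (i) with $k\le n$. But your argument has a genuine gap at the exceptional set of that approximation. Theorem~\ref{Th_ap} (and every Lusin-type $C^{k,\alpha}$ approximation of this kind) only provides, for each $\varepsilon>0$, an open set $U$ with $\Le^n(U)<\varepsilon$ off which $v$ and $\nabla v$ coincide with a $\Cc$-map; iterating, your sets $E_j$ exhaust $\R^n$ only up to a Lebesgue-null residual set $N=\bigcap_j U_j$. This $N$ is \emph{not} contained in $A_v$: membership in $A_v$ is a first-order condition (non-differentiability, non-Lebesgue point of $\nabla v$), whereas failure of the $C^{k,\alpha}$-jet bounds is a $k$-th order condition, so points of $N$ are typically perfectly good points of $\R^n\setminus A_v$. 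Hence it can neither be ``absorbed into $A_v$'' as you suggest, nor dismissed because $Z_{v,m}$ excludes $A_v$. And a Lebesgue-null set of critical points can a priori meet the fibres $v^{-1}(y)$ in sets of positive $\H^{\mu_q}$-measure for a set of $y$ of positive $\H^q$-measure, so a quantitative statement about $Z_{v,m}\cap N$ is unavoidable.

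This is exactly what the paper supplies and what your proposal lacks: the set function $\Phi$ of Theorem~\ref{FubN}, the single-cube estimates of the Appendix (Theorems~\ref{Sob-est-1}, \ref{Sob-est-2}, \ref{Sl-est-3}, \ref{Sl-est-4}, resting on Yomdin's entropy Theorem~\ref{lb8} and the Choquet-type maximal estimates, Theorems~\ref{AM1}, \ref{lb7}, \ref{cs-lb7}), and the resulting absolute-continuity statements (Corollary~\ref{cor00}, Lemma~\ref{cor00-sl}): if $\Le^n(E)$ is small, then $\Phi(Z_v\cap E)$ is small. With these, the proof runs: $\Phi(Z_v\cap E)=\Phi_h(Z_h\cap E)=0$ on the good set by Theorem~\ref{DST-H}, $\Phi(Z_v\cap U)$ arbitrarily small on the exceptional open set, hence $\Phi(Z_{v,m})=0$, and Theorem~\ref{FubN} converts this into the a.e.\ statement on fibres. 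Note also that your uniform treatment via maximal-function characterisations of $\Le^{k+\al}_p$ cannot cover case (i) with $p=1$, $k>n$ (in general $\M\nabla^k v\notin L_1$, cf.\ Remark~\ref{rem-cs-1}); there the paper uses the polynomial approximation of Lemma~\ref{lb3}, a $C^k$-approximation and the separate Lemma~\ref{Dub-smooth}, while case (ii) is quoted from~\cite{HKK}. Until you prove an estimate of the type of Lemma~\ref{cor00-sl} (or otherwise control the critical points in the residual null set), the proof does not close.
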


Here $A_v$ means  the set of `bad' points at which either the function~$v$
is not differentiable or which are not the  Lebesgue points for~$\nabla v$.
Recall, that by approximation results (see, e.g.,
\cite{Sw} \,and \,\cite{KK15}\,) under conditions of Theorem~\ref{DFT-F} the equalities
\begin{eqnarray*}\label{hf0}\!\!\!\!\!\!\!\!\!\!\!\!\!\!\!\!\!\!\!\H^\tau(A_v)=0\qquad\forall\tau>\tt:=n-(k+\al-1)p\qquad\mbox{in cases (i), (iii)};\\
\label{hf0--}\!\!\!\!\!\!\!\!\H^{\tt}(A_v)=\H^p(A_v)=0\qquad\tt:=n-(k+\al-1)p=p\qquad\mbox{in cases (ii), (iv)}
\end{eqnarray*}
are valid (in particular, $A_v=\emptyset$ if $(k+\al-1)p>n$).
However, it was proved in~\cite{FKR-n} that the impact of the "bad"\ set~$A_v$ is negligible
in the~ Bridge D.-F. Theorem~\ref{DFT-F}, i.e., the following statement holds:

\begin{ttt}[\cite{FKR-n}]\label{DFT-F-negl}{\sl Let the conditions of Theorem~\ref{DFT-F} be fulfilled for a~function $v:\R^n\to\R^d$. Then
\begin{equation*}\label{dub7-qqqq} \H^{\mu_q}(A_v\cap
v^{-1}(y))=0\qquad\mbox{ for \ $\H^q$--a.a. }y\in\R^d
\end{equation*}
for any $q>m$.
}
\end{ttt}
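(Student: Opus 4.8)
\medskip

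\noindent\textbf{Sketch of proof.} The plan is to reduce the assertion to a measure-theoretic estimate on the fibres of $v$ and then to insert the sharp Luzin-type $N$-properties of Sobolev and fractional Sobolev mappings near their non-Lebesgue sets. First I would pass to Hausdorff contents: since $\H^{\mu_q}(S)=0$ precisely when $\H^{\mu_q}_\infty(S)=0$, it suffices to prove that for each fixed $q>m-1$ and every $\lambda>0$ the exceptional set
$$E_\lambda:=\bigl\{\,y\in\R^d:\ \H^{\mu_q}_\infty\bigl(A_v\cap v^{-1}(y)\bigr)>\lambda\,\bigr\}$$
satisfies $\H^q_\infty(E_\lambda)=0$; the claim then follows by taking the countable union over $\lambda=1/j$. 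By the approximation results recalled after Theorem~\ref{DFT-F}, in cases (i) and (iii) one has $\H^\tau(A_v)=0$ for every $\tau>\tt=n-(k+\al-1)p$, whereas in the critical cases (ii) and (iv) even $\H^{\tt}(A_v)=\H^p(A_v)=0$. Hence, for any $\e>0$ and any admissible exponent $\tau$ (taken equal to $\tt$ in the limiting cases, and to $\tt+\delta$ with $\delta>0$ small otherwise), I may fix a covering $A_v\subseteq\bigcup_i B(x_i,r_i)$ with $x_i\in A_v$, the radii $r_i$ as small as I wish, and $\sum_i r_i^{\tau}<\e$.

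Next I would establish a coarea-type content inequality on this covering. Writing $\omega_i$ for the oscillation of $v$ on $B(x_i,r_i)$, we have $v(B(x_i,r_i))\subseteq B(v(x_i),\omega_i)$, so for every $y\in\R^d$
$$\H^{\mu_q}_\infty\bigl(A_v\cap v^{-1}(y)\bigr)\ \le\ \sum_{i\,:\,|y-v(x_i)|\le\omega_i}(2r_i)^{\mu_q}$$
(the right-hand side read as the number of such indices $i$ when $\mu_q\le0$). Since $\lambda\,\mathbf{1}_{E_\lambda}(y)$ does not exceed this bound, integrating against an arbitrary Frostman measure $\nu$ with $\nu(B(z,\rho))\le\rho^q$ for all $z,\rho$ and using the duality $\H^q_\infty(E_\lambda)\le C\sup_\nu\nu(E_\lambda)$ (valid as $E_\lambda$ is Borel, $v$ being continuous) gives
$$\H^q_\infty(E_\lambda)\ \le\ \frac{C}{\lambda}\sum_i r_i^{\mu_q}\,\omega_i^{\,q}\,,\qquad\text{and}\qquad\H^q_\infty\bigl(v(A_v)\bigr)\ \le\ C\sum_i\omega_i^{\,q}\ \text{ when }\mu_q\le0.$$
Everything is thereby reduced to the summability of $\sum_i r_i^{\mu_q}\omega_i^{\,q}$, respectively of $\sum_i\omega_i^{\,q}$, as $\e\to0$ and $\delta\to0$.

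The core of the proof is the control of the oscillations $\omega_i$. The global Morrey modulus $\omega_i\lesssim r_i^{\,k+\al-n/p}$ makes these series summable only for $q$ outside an intermediate band, and in the case $k=1$, $\al=0$ --- where the bad set may have full Hausdorff dimension $n$ and the Morrey estimate is useless --- it leaves the whole range $q>m-1$ open. To close the gap one invokes the sharp $N$-properties with respect to lower-dimensional Hausdorff measures established in \cite{BKK2,FKR-n,KK15} for $W^k_p$, $\Le^{k+\al}_p$ and their Lorentz refinements $W^k_{p,1}$, $\Le^{k+\al}_{p,1}$: these improve both the bound on $\dim v(A_v)$ and, on the covering of $A_v$, the effective oscillation exponent (roughly, the global H\"older exponent is replaced by a pointwise one driven by the local Sobolev energy), which renders $\sum_i r_i^{\mu_q}\omega_i^{\,q}$ summable for every $q>m-1$. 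In the critical cases (ii) and (iv) it is exactly the Lorentz hypothesis that simultaneously secures the continuity of $v$ and the sharp dimension $\tt=p$ used above. One then finishes by the routine case analysis over (i)--(iv) and the corresponding subranges of $q$, letting $\e,\delta\to0$.

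I expect the main obstacle to be precisely this intermediate regime of exponents: the values of $q$ for which $\mu_q$ is small or negative while $q$ still lies below the naive threshold for the Hausdorff dimension of $v(A_v)$. There the soft Eilenberg/H\"older slicing inequalities are not enough, and the full force of the $N$-properties of \cite{BKK2,FKR-n,KK15} must be used; the attendant bookkeeping across the four functional settings --- integer versus fractional smoothness, subcritical versus critical integrability --- is the technical, rather than conceptual, heart of the argument.
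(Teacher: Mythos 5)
First, note that this paper does not actually prove Theorem~\ref{DFT-F-negl}: it is imported verbatim from the companion paper \cite{FKR-n}, so there is no internal proof to compare with. Judged on its own terms, your reduction is sound and consistent with the machinery the paper does use (the set function of Theorem~\ref{FubN} and its refinement from \cite{HKK}): passing to Hausdorff contents, the fibrewise covering inequality $\H^{\mu_q}_\infty(A_v\cap v^{-1}(y))\le\sum_{i:|y-v(x_i)|\le\omega_i}(2r_i)^{\mu_q}$, and the Frostman/Choquet duality (modulo the standard measurability care) correctly reduce everything to the smallness of $\sum_i r_i^{\mu_q}\omega_i^{\,q}$ over coverings of $A_v$.

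The genuine gap is that this crucial smallness, for \emph{every} $q>m-1$ and in particular in the regimes $(k+\al-1)p\le n$ and the limiting cases (ii), (iv), is not argued at all: you attribute it to the ``sharp $N$-properties'' of \cite{BKK2,FKR-n,KK15}. That is essentially circular here, since the theorem in question \emph{is} one of the $N$-property results of \cite{FKR-n}; the other cited results are qualitative image statements (of the type $\H^\tau(S)=0\Rightarrow\H^\sigma(v(S))=0$) which neither control fibres nor supply the ``effective oscillation exponent'' you invoke. There is also a structural order-of-quantifiers problem: you first fix a covering realizing $\sum_i r_i^{\tau}<\e$ and then hope to estimate $\omega_i$ on those balls, but in the critical cases there is no pointwise H\"older bound, and the balls witnessing the Hausdorff smallness of $A_v$ need not be adapted to where the local energy of $v$ concentrates. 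The actual mechanism (in \cite{FKR-n}, and in the same spirit as Theorems~\ref{Sob-est-2}--\ref{Sl-est-3} of the Appendix) builds the covering from the function itself: one decomposes according to the dyadic size of $\M(I_\beta g)$ (respectively of the local maximal function of the top derivative), controls $\sum_j 2^{js}\H^{q+\mu}_\infty(E_j)$ by the Adams/Choquet-integral estimates of Theorems~\ref{AM1} and~\ref{lb7} (the Lorentz norm being exactly what saves the cases (ii) and (iv)), and only then sums the quantities $r^{\mu_q}\omega^{\,q}$. Without supplying this quantitative step your sketch covers at best the easy range of $q$ where the global Morrey exponent already suffices, not the full statement.
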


\

\begin{rem}\label{remq0}Note, that since $\mu_q\le0$ \,for \,$q\ge\b=m +\frac{n-m}{k+\alpha}$, the assertions of
Theorems~\ref{DFT-F}--\ref{DFT-F-negl} are equivalent to the equality
$0=\H^q\bigl[v(A_v\cup Z_{v,m})\bigr]$ \ for \ $q\ge\b$, so
it is sufficient to check the assertions of
Theorems~\ref{DFT-F}--\ref{DFT-F-negl} for
\,$q\in(m,\b]$ only.
\end{rem}

\begin{rem}\label{hist1}
Note that in the pioneering paper by De~Pascale~\cite{DeP} the assertion of the~initial Morse--Sard theorem~(\ref{classical}) \ (i.e., when $k=n-m$, $q=\b=m+1=d$, $\mu_q=0$\,) was obtained for the Sobolev
classes $W^k_p(\R^n,\R^d)$ under additional assumption $p>n$ (in this case the classical embedding $W^k_p(\R^n,\R^d)\hookrightarrow C^{k-1}$ holds, so there are no problems with nondifferentiability points). For the same Sobolev class $W^k_p(\R^n,\R^d)$ with $p>n$ the assertion of the Dubovitski\u{\i} Theorem~A was proved in the recent paper~\cite{HZ} by 
P.~Haj\l{}asz  and S.~Zimmermann.\, Finally, 
the assertion of Bridge Theorem~\ref{DFT-F}-(ii) was proved in our previous paper~\cite{HKK} with P.~Haj\l{}asz and J.~Kristensen\footnote{The~only technical difference is that 
in~\cite{HKK}   we used the notation $Z_{v,m}=\{x\in\mathbb R^n\setminus A_v:\mathrm{rank\,}\nabla v(x)<m\}$, i.e., there $m-1$ plays the role of the~parameter~$m$ of the~present article.}. 
\end{rem}

\

In conclusion, let us comment briefly that the merge ideas for the proofs
are from our previous papers~\cite{BKK2}, \cite{KK3,KK15}
and~\cite{HKK}. In particular, the joint papers \cite{BKK,BKK2} by
one of the authors with J.~Bourgain and J.~Kristensen contain many of the key ideas
that allow us to consider nondifferentiable Sobolev mappings. As in \cite{BKK2} (and
subsequently in \cite{KK3}) we also crucially use Y.~Yomdin's (see
~\cite{Yom}) entropy estimates of near critical values for
polynomials (recalled in Theorem~\ref{lb8} below). These Yomdin's results seems to be very deep and fruitful in the topic,
see, e.g., the~very recent paper~\cite{Barbet} where the  Morse-Sard theorems were proved for min-type functions and for
Lipschitz selections.

In addition to the above mentioned papers there is a growing
number of papers on the topic, including
\cite{Alb,AS,Bates,Bu,Fig,HZ,Nor,PZ,Putten1}.
\bigskip

Some words about the structure of the paper. In the second section we give some basic  definitions and recall some classical results in analysis, which are very useful tools in our study. In the third sections we give the proofs of main theorems  formulated above. For a reader convenience, the most technical part is moved to the last section
Appendix~\ref{appendixII}, where we obtain estimates for the critical values on a~single $n$-dimensional cube. These estimates are strong enough and useful for a~solution of
 the following more general

 \medskip
 {\bf Problem~C.} {\sl Let $S$ be a subset of critical set~$Z_{v,m}=\{\rank \nabla v\le m\}$ and the equality $\H^\tau(S)=0$ (or the~ inequality~ $\H^\tau(S)<\infty$\,) holds for some
 $\t>0$. Does it imply that $\H^{\sigma}(v(E))=0$ for some $\sigma=\sigma(\tau)$?}

\medskip
 The complete solution to this problem is done in our new paper~\cite{FKR-3}; this solution  based on the technique developed in  the present paper.

\medskip
\noindent {\em Acknowledgment.} M.K. was partially supported by
the Ministry of Education and Science of the Russian Federation
(grant 14.Z50.31.0037). The main part of the paper was written
during a visit of M.K. to the University of Campania "Luigi
Vanvitelli" in~2017, and he is very thankful for the hospitality.

\section{Preliminaries}
\label{prel}

\noindent

\noindent By  an {\it $n$--dimensional interval} we mean a closed
cube in $\R^n$ with sides parallel to the coordinate axes. If $Q$
is an $n$--dimensional cubic interval then we write $\ell(Q)$ for
its sidelength.

For a subset $S$ of $\R^n$ we write $\Lb^n(S)$ for its outer
Lebesgue measure (sometimes we use the symbol~$\meas S$ for the
same purpose\,). The $m$--dimensional Hausdorff measure is denoted
by $\H^m$ and the $m$--dimensional Hausdorff content by
$\H^{m}_{\infty}$. Recall that for any subset $S$ of $\R^n$ we
have by definition
$$
\H^m (S)=\lim\limits_{t\searrow 0}\H^m_t (S) = \sup_{t >0}
\H^{m}_{t}(S),
$$
where for each $0< t \leq \infty$,
$$
\H^m_t (S)=\inf\left\{ \sum_{i=1}^\infty(\diam S_i)^m\ :\ \diam
S_i\le t,\ \ S \subset\bigcup\limits_{i=1}^\infty S_i \right\}.
$$
It is well known that $\H^n(S)  = \H^n_\infty(S)\sim\Lb^n(S)$ for
sets~$S\subset\R^n$.

To simplify the notation, we write $\|f\|_{\LL_p}$ instead of
$\|f\|_{\LL_p(\R^n)}$, etc.

The Sobolev space $\WW^{k}_{p} (\R^n,\R^d)$ is as usual defined as
consisting of those $\R^d$-valued functions $f\in \LL_p(\R^n)$
whose distributional partial derivatives of orders $l\le k$ belong
to $\LL_p(\R^n)$ (for detailed definitions and differentiability
properties of such functions see, e.g., \cite{EG}, \cite{M},
\cite{Ziem}, \cite{Dor}). Denote by $\nabla^k f$ the vector-valued
function consisting of all $k$-th order partial derivatives of $f$
arranged in some fixed order. However, for the case of first order
derivatives $k=1$ we shall often think of $\nabla f(x)$ as the
Jacobi matrix of $f$ at $x$, thus the $d \times n$ matrix whose
$r$-th row is the vector of partial derivatives of the $r$-th
coordinate function.

We use the norm
$$
\|f\|_{\WW^{k}_{p}}=\|f\|_{\LL_p}+\|\nabla
f\|_{\LL_p}+\dots+\|\nabla^kf\|_{\LL_p},
$$
 and unless otherwise specified all norms on the
spaces $\R^s$ ($s \in \N$) will be the usual euclidean norms.

Working with locally integrable functions, we always assume that
the precise representatives are chosen. If $w\in
L_{1,\loc}(\Omega)$, then the precise representative $w^*$ is
defined for {\em all} $x \in \Omega$ by
\begin{equation*}
\label{lrule}w^*(x)=\left\{\begin{array}{rcl} {\displaystyle
\lim\limits_{r\searrow 0} \dashint_{B(x,r)}{w}(z)\,\dd z}, &
\mbox{ if the limit exists
and is finite,}\\
 0 \qquad\qquad\quad & \; \mbox{ otherwise},
\end{array}\right.
\end{equation*}
where the dashed integral as usual denotes the integral mean,
$$
\dashint_{B(x,r)}{ w}(z) \, \dd
z=\frac{1}{\Lb^n(B(x,r))}\int_{B(x,r)}{ w}(z)\,\dd z,
$$
and $B(x,r)=\{y: |y-x|<r\}$ is the open ball of radius $r$
centered at $x$. Henceforth we omit special notation for the
precise representative writing simply $w^* = w$.

If $k<n$, then it is well-known that functions from Sobolev spaces
$\WW^{k}_{p}(\R^n)$ are continuous for $p>\frac{n}k$ and could be
discontinuous for $p\le \p=\frac{n}k$ (see, e.g., \cite{M,Ziem}). The Sobolev--Lorentz space $\WW^{k}_{\p,1}(\R^n)\subset
\WW^{k}_{\p}(\R^n)$ is a refinement of the corresponding Sobolev
space. Among other things functions that are locally in
$\WW^{k}_{\p,1}$ on $\R^n$ are in particular continuous (see, e.g., \cite{KK3}\,).

Here we only mentioned the Lorentz space $\LL_{p,1}$, and in this
case one may rewrite the~norm as (see for instance
\cite[Proposition 3.6]{Maly2})
\begin{equation*}\label{lor1}
\|f\|_{L_{p,1}}=
\int\limits_0^{+\infty}\bigl[\Lb^n(\{x\in\R^n:|f(x)|>t\})\bigr]^{
\frac1p} \, \dd t.
\end{equation*}
Of course, we have the inequality
\begin{equation}\label{lor1---}
\|f\|_{L_p}\le \|f\|_{L_{p,1}}.\end{equation}

By definition we put $\|g\|_{L_{p,1}(E)}:=\|1_E\cdot g\|_{L_{p,1}}$,
where $1_E$ is the indicator function of~$E$.

Denote by $\WW^{k}_{p,1}(\R^n)$ the space of all functions $v\in
\WW^k_p(\R^n)$ such that in addition the Lorentz norm~$\|\nabla^k
v\|_{\LL_{p,1}}$ is finite.

For a function $f\in L_{1,\loc}(\R^n)$ we often use the classical
Hardy--Littlewood maximal function:
$$
\M f(x)=\sup\limits_{r>0}  \dashint_{B(x,r)} \! |f(y)|\,\dd y.
$$

\subsection{On potential spaces~$\LK$}
To simplify our descriptions, below in the next two subsections we will write $\al$ instead of $k+\al$, so here we assume that $\al\in \R_+$ (i.e., here not necessarily $\al<1$, as in formulations of main results\,).

In the paper we deal with {\it(Bessel)-potential space} $\LK$ with
$\al>0$. Recall, that function $v:\R^n\to\R^d$ belongs to the
space $\LK$, if it is a convolution of the Bessel
kernel~$G_{\alpha}$ with a function~$g\in L_p(\R^n)$:
$$v=\GG_\alpha(g):=G_\al*g,$$
where
$\widehat{G_\alpha}(\xi)=(1+4\pi^2\xi^2)^{-\alpha/2}$. In
particular, \
$$\|v\|_{\LK}:=\|g\|_{L_p}.$$
It is well known that
\begin{equation*}\label{fN1----} \LK(\R^n)=W^\al_p(\R^n)\qquad\mbox{ if }\ \al\in\N\quad\mbox{and}\quad 1<p<\infty.
\end{equation*}

Recall, that the Bessel kernel is radial, $G_\al(x)=G_\al(|x|)$, and it could be calculated as
\begin{equation}\label{bc-2}
G_\al(x)\ =\ a_\al\int\limits_0^\infty t^{\frac{\al-n}2}e^{-\frac{\pi |x|^2}{t}-\frac{t}{4\pi}}\frac{dt}t,
, \qquad \forall\al>0,
\end{equation}
where $a_\al$ is some constant.

It is well known, that $G_\al(x)<a_\al\,|x|^{\al-n
}$ for $0<\al<n$ (see, e.g., \cite[page10]{AdH}\,) and we need
some simple  technical extension of this fact to the derivatives.
\begin{lem}\label{lem-bc-2}{\sl
If $0<\al<n+2$,
then
for any integer $j\in\N$ the estimate
\begin{equation}\label{bc-4}
\bigl|\nabla^jG_\al(x)\bigr|\le C\,|x|^{\al-n-j}
\end{equation}
holds, where the constant $C$ depends on~$\al,n,j$ only.}
\end{lem}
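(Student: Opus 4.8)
The plan is to differentiate under the integral sign in the representation \eqref{bc-2} and then estimate the resulting integral by splitting it at a scale adapted to $|x|$. First I would record that, by the formula \eqref{bc-2}, the Bessel kernel is smooth away from the origin and one may differentiate inside the integral: for $x\ne 0$ and any multi-index of order $j$, the $j$-th derivative of the integrand brings down a factor which is a polynomial of degree $\le j$ in the variables $x_i/t$ times the Gaussian $e^{-\pi|x|^2/t}$ (each $\partial_{x_i}$ of $e^{-\pi|x|^2/t}$ produces a factor $-2\pi x_i/t$). Hence one gets a finite sum of terms of the shape
\begin{equation*}
a_\al \int\limits_0^\infty t^{\frac{\al-n}{2}} \, \frac{x^\beta}{t^{|\beta|}} \, e^{-\frac{\pi|x|^2}{t}-\frac{t}{4\pi}} \, \frac{\dd t}{t},
\end{equation*}
with $|\beta|\le j$ and $|x^\beta|\le |x|^{|\beta|}$; it therefore suffices to bound each such term by $C|x|^{\al-n-j}$, and since $|x|^{|\beta|}\le |x|^j$ when $|x|\le 1$ (the estimate for large $|x|$ being trivial because $G_\al$ and all its derivatives decay exponentially, as one sees from the $e^{-t/4\pi}$ factor and the Gaussian), I may reduce to estimating $\int_0^\infty t^{\frac{\al-n}{2}-|\beta|-1} e^{-\pi|x|^2/t}\,\dd t$ up to constants, forgetting the harmless factor $e^{-t/4\pi}\le 1$.

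Next I would carry out the standard change of variables $s = |x|^2/t$ (so $t=|x|^2/s$, $\dd t/t = -\dd s/s$), which turns that integral into
\begin{equation*}
\int\limits_0^\infty \Bigl(\tfrac{|x|^2}{s}\Bigr)^{\frac{\al-n}{2}-|\beta|} e^{-\pi s}\,\frac{\dd s}{s} \;=\; |x|^{\al-n-2|\beta|}\int\limits_0^\infty s^{|\beta|-\frac{\al-n}{2}-1} e^{-\pi s}\,\dd s .
\end{equation*}
The remaining $s$-integral is a Gamma-type integral: it converges at $s=+\infty$ always (because of $e^{-\pi s}$), and it converges at $s=0$ precisely when the exponent $|\beta|-\frac{\al-n}{2}-1 > -1$, i.e. when $\al < n + 2|\beta|$. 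Since $|\beta|\ge 1$ in every term that actually contributes a derivative (the term with $\beta=0$ only survives when $j=0$, which is excluded as $j\in\N$), the condition $\al < n+2$ assumed in the lemma guarantees $\al < n+2 \le n+2|\beta|$, so the integral is a finite constant depending only on $\al,n,|\beta|$. This yields a bound $C|x|^{\al-n-2|\beta|}$ for each term; combining with $|x^\beta|\le |x|^{|\beta|}$ for $|x|\le 1$ gives $|x|^{\al-n-2|\beta|}\cdot|x|^{|\beta|} = |x|^{\al-n-|\beta|}\le |x|^{\al-n-j}$ when $|\beta|\le j$ and $|x|\le 1$. For $|x|\ge 1$ one bounds everything by a constant using exponential decay, and $1\le |x|^{-(\al-n-j)}$ whenever $\al-n-j<0$; the case $\al-n-j\ge 0$ is handled simply by noting the derivative is continuous and bounded on, say, $1\le|x|\le 2$ and decays beyond, while $|x|^{\al-n-j}$ is bounded below on that range. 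Assembling the finitely many terms gives \eqref{bc-4}.

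The only genuinely delicate point is justifying differentiation under the integral sign down to $x$ arbitrarily close to $0$ and controlling the borderline convergence at $t\to 0^+$ (equivalently $s\to\infty$, which is fine) versus $t\to\infty$ (equivalently $s\to 0^+$, which is where the hypothesis $\al<n+2$ is used). To make the interchange rigorous I would fix a compact set $K\subset \R^n\setminus\{0\}$, observe that on $K$ the integrand and all its $x$-derivatives are dominated, uniformly in $x\in K$, by an integrable function of $t$ of the form $C_K\, t^{\frac{\al-n}{2}-j-1} e^{-c_K/t} e^{-t/4\pi}$ (the Gaussian factor supplying integrability near $t=0$ for \emph{any} exponent, and $e^{-t/4\pi}$ near $t=\infty$), so dominated convergence applies and $G_\al\in C^\infty(\R^n\setminus\{0\})$ with the derivative given by the differentiated integral; the quantitative estimate is then exactly the scaling computation above. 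I expect this domination/convergence bookkeeping to be the main (though routine) obstacle, while the scaling estimate itself is immediate once set up.
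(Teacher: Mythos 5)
Your overall route is the same as the paper's: differentiate the representation (\ref{bc-2}) under the integral sign, rescale ($s=|x|^2/t$, equivalently $t\mapsto r^2t$ in the paper's radial computation), and let the hypothesis $\al<n+2$ enter only through convergence of the resulting Gamma-type integral at one endpoint; the paper carries this out explicitly for $j=1$ via $f_\al'(r)$ and delegates $j>1$ to induction.

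There is, however, a concrete error in your bookkeeping for $j\ge2$, and it is exactly the point your convergence discussion relies on. Differentiating $e^{-\pi|x|^2/t}$ $j$ times does \emph{not} produce a polynomial of degree $\le j$ in the variables $x_i/t$: the terms have the form $c_\beta\,x^\beta\,t^{-(j+|\beta|)/2}$ with $|\beta|\le j$ and $j-|\beta|$ even. Already for $j=2$ one gets $\bigl(-\tfrac{2\pi}{t}\,\delta_{i\ell}+\tfrac{4\pi^2x_ix_\ell}{t^2}\bigr)e^{-\pi|x|^2/t}$, whose first term has $\beta=0$ but carries a factor $t^{-1}$. Consequently your claim that ``the term with $\beta=0$ only survives when $j=0$'' is false for every even $j\ge2$, and that claim is what you used to ensure the convergence condition $\al<n+2|\beta|$ at $s\to0$. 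The repair is straightforward and in fact cleaner than your version: for the correct term the substitution $s=|x|^2/t$ gives
\[
|x^\beta|\int\limits_0^\infty t^{\frac{\al-n-j-|\beta|}{2}-1}e^{-\frac{\pi|x|^2}{t}}\,dt
\;\le\; |x|^{\al-n-j}\int\limits_0^\infty s^{\frac{n+j+|\beta|-\al}{2}-1}e^{-\pi s}\,ds,
\]
and the $s$-integral converges at $0$ iff $\al<n+j+|\beta|$. Since every admissible term has $j+|\beta|\ge2$ (for $j=1$ one must have $|\beta|=1$; for even $j\ge2$ the $\beta=0$ term needs $\al<n+j$), the hypothesis $\al<n+2$ suffices for all of them. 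Note that this corrected computation yields the exact homogeneity $|x|^{\al-n-j}$ for each term and for all $x\ne0$, so your separate treatment of $|x|\le1$ and $|x|\ge1$ (forced only by the mis-assigned $t$-powers, which give $|x|^{\al-n-|\beta|}$) is unnecessary. Your domination argument justifying differentiation under the integral on compact subsets of $\R^n\setminus\{0\}$ is fine as stated.
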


\begin{proof}
Denote $$f_\al(r)=\int\limits_0^\infty t^{\frac{\al-n}2}e^{-\frac{\pi r^2}{t}-\frac{t}{4\pi}}\frac{dt}t.$$
 Then by direct calculation
 $$f'_\al(r)=-2\pi r\int\limits_0^\infty t^{\frac{\al-n-2}2}e^{-\frac{\pi ^2}{t}-\frac{t}{4\pi}}\frac{dt}t=-2\pi r^{\al-n-1}\int\limits_0^\infty t^{\frac{\al-n-2}2}e^{-\frac{\pi}{t}-\frac{t r^2}{4\pi}}\frac{dt}t $$
 (see, e.g., \cite[page13]{AdH}). This finishes the proof for $j=1$. The proof for $j>1$ could be produced the same way by induction.
 \end{proof}

\subsection{On Lorentz potential spaces~$\LLL$}

To cover some other limiting cases,
denote by $\LLL(\R^n,\R^d)$ the space
of functions which could be represented as a~convolution of
the~Bessel potential $G_\al$ with a function~$g$ from the Lorentz
space~$L_{p,1}$; respectively,
$$\|v\|_{\LLL}:=\|g\|_{L_{p,1}}.$$

Because of inequality~(\ref{lor1---}), we have an evident inclusion
\begin{equation*}\label{lor2}
\LLL(\R^n)\subset \LK(\R^n).\end{equation*}

\begin{ttt}[see, e.g., Theorem~2.2 in~\cite{FKR-n}, ~cf. with Lemma~3 on page 136 in~\cite{St}]\label{propert-pot-lor1}{\sl
Let $\alpha\ge1$ and $1<p<\infty$. Then $f\in
\LLL(\R^n)$ iff $f\in\Le^{\al-1}_{p,1}(\R^n)$ \ and \ $\frac{\partial f}{\partial x_j}\in \Le^{\al-1}_{p,1}(\R^n)$ for every $j=1,\dots,n$. }
\end{ttt}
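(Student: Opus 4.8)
The statement to prove is Theorem~\ref{propert-pot-lor1}: for $\alpha\ge1$ and $1<p<\infty$, one has $f\in\Lll$ iff $f$ and all its first-order partial derivatives lie in $\Lll[\al-1]$ (in the paper's shorthand, $\LLL$ vs.\ $\Le^{\al-1}_{p,1}$). Since membership in these spaces means "convolution of $G_\beta$ with an $L_{p,1}$ function," the natural plan is to work at the level of the Bessel kernels and the operators $\GG_\beta(g)=G_\beta*g$. The key algebraic fact is the semigroup/composition identity for Bessel potentials, $G_{\al}=G_{\al-1}*G_1$ (equivalently $\widehat{G_\al}=\widehat{G_{\al-1}}\cdot\widehat{G_1}$), together with the fact that $G_1\in L_1(\R^n)$ with $\int G_1=1$, so that convolution with $G_1$ is a bounded operator on every $L_{p,1}$ (Young's inequality extends to Lorentz spaces). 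The proof then reduces to understanding how one derivative interacts with $G_1$ and with the space $L_{p,1}$.

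First, for the forward direction, I would write $f=G_\al*g=G_1*(G_{\al-1}*g)$. Then $G_{\al-1}*g\in\Le^{\al-1}_{p,1}$ by definition, which gives $f=G_1*(G_{\al-1}*g)$; since $G_1*G_{\al-1}=G_\al$ is not quite what I want here, the cleaner route is: $f=G_{\al-1}*(G_1*g)$, and $G_1*g\in L_{p,1}$ because $G_1\in L_1$, so $f\in\Le^{\al-1}_{p,1}$ directly. For the derivatives, $\partial_j f=\partial_j G_\al * g=(\partial_j G_1)*(G_{\al-1}*g)$; I need to check that $\partial_j G_1\in L_1(\R^n)$ (or more precisely that convolution against it maps $L_{p,1}$ boundedly), which is a classical fact — $\nabla G_1$ is integrable at infinity because of the exponential decay of $G_1$, and at the origin one uses Lemma~\ref{lem-bc-2} with $\al=1,j=1$: $|\nabla G_1(x)|\le C|x|^{1-n-1}=C|x|^{-n}$, which is \emph{not} integrable near $0$ — so this naive splitting fails and one must instead write $\partial_j f=G_{\al-1}*(\partial_j(G_1*g))=G_{\al-1}*(\partial_j G_1 * g)$ and argue that $\partial_j G_1 * g\in L_{p,1}$ via the Riesz-transform-type boundedness (the operator $g\mapsto \partial_j(G_1*g)$ is a Calderón–Zygmund-type / Mihlin multiplier operator with symbol $2\pi i\xi_j(1+4\pi^2|\xi|^2)^{-1/2}$, bounded on all $L_{p,1}$, $1<p<\infty$, by interpolation from its $L_p$-boundedness plus the real interpolation characterization of Lorentz spaces). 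Alternatively, and perhaps more in the spirit of the cited Stein reference (Lemma 3, p.~136 of \cite{St}), one decomposes $\partial_j G_1 = (\partial_j G_1 - c_j G_1) + c_j G_1$ or writes $\partial_j G_\al$ in terms of a Riesz transform composed with $G_{\al}$: $\partial_j = R_j \circ (-\Delta)^{1/2}$, and $(-\Delta)^{1/2} G_\al$ relates to $G_{\al-2}$-type kernels — but sticking with the multiplier/interpolation argument is cleanest.

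For the converse, suppose $f\in\Le^{\al-1}_{p,1}$ and each $\partial_j f\in\Le^{\al-1}_{p,1}$. Write $f=G_{\al-1}*h$ and $\partial_j f=G_{\al-1}*h_j$ with $h,h_j\in L_{p,1}$. I want to produce $g\in L_{p,1}$ with $f=G_\al*g$. On the Fourier side, $g=(-\Delta+I)^{1/2}(-\Delta+I)^{(\al-1)/2}f$ up to constants; concretely $g = \GG_1^{-1}(h)$ where $\GG_1^{-1}=(I-\Delta)^{1/2}$ acting on $h$, and $(I-\Delta)^{1/2}h = h + \sum_j R_j'(h_j)$-type formula, using the identity $1+4\pi^2|\xi|^2 = (1+4\pi^2|\xi|^2)\cdot 1$ distributed as $\widehat{G_1}^{-1} = \widehat{G_1}\cdot(1+4\pi^2|\xi|^2) = \widehat{G_1} + \sum_j (2\pi i\xi_j)\cdot(2\pi i\xi_j)\widehat{G_1}$, hence $g = h + \sum_j \partial_j(\text{(Riesz-type operator)} h_j)$, each term of which lies in $L_{p,1}$ by the same multiplier boundedness as above. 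One then verifies $G_\al * g = f$ by taking Fourier transforms (justified because everything is a tempered distribution and the kernels are nice).

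\textbf{Main obstacle.} The genuine difficulty — beyond bookkeeping with Fourier multipliers — is the boundedness on the Lorentz space $L_{p,1}$ (rather than on $L_p$) of the first-order operators $g\mapsto \partial_j(G_1*g)$ and their inverses. On $L_p$ with $1<p<\infty$ this is standard Calderón–Zygmund theory (the symbols are smooth away from $0$ and homogeneous of degree $0$ at infinity, bounded near $0$), but the endpoint-type space $L_{p,1}$ is not reached by the usual Marcinkiewicz interpolation statement in the form "$L_{p_0}\to L_{p_0}$ and $L_{p_1}\to L_{p_1}$ imply $L_{p,1}\to L_{p,1}$"; instead one uses that a sublinear operator bounded on $L_{p_0}$ and $L_{p_1}$ with $p_0<p<p_1$ is bounded $L_{p,1}\to L_{p,1}$ (this is a real-interpolation statement, e.g.\ $(L_{p_0},L_{p_1})_{\theta,1}=L_{p,1}$), which does hold and is exactly what is invoked in the Stein reference. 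Once that boundedness is in hand, the rest of the argument is the routine Fourier-multiplier algebra sketched above. I would present the argument by: (1) recording $G_\al=G_1*G_{\al-1}$ and the $L_{p,1}$-boundedness of $G_1*(\cdot)$ and of $\partial_j G_1*(\cdot)$ and $(I-\Delta)^{1/2}$-on-$\Le^{\al-1}_{p,1}$; (2) proving the forward inclusion; (3) proving the converse via the explicit formula for $g$; (4) verifying all identities on the Fourier side.
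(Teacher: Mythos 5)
The paper itself gives no proof of this statement---it is imported by citation from \cite{FKR-n} (Theorem~2.2), cf.\ Stein \cite[Lemma~3, p.~136]{St}---and your argument follows essentially that same classical route: the semigroup identity $G_{\al}=G_1*G_{\al-1}$, Young's inequality for $L_{p,1}$ (convolution with $G_1\in L_1$), and the $L_{p,1}$-boundedness of the Mihlin multiplier operators with symbols $2\pi i\xi_j(1+4\pi^2|\xi|^2)^{-1/2}$, deduced from their $L_p$-boundedness by real interpolation, $(L_{p_0},L_{p_1})_{\theta,1}=L_{p,1}$; you also correctly reject the naive splitting via $\partial_jG_1\in L_1$, which fails since $|\nabla G_1(x)|\sim|x|^{-n}$ near the origin. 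Your plan is sound; the only slip is a sign in the converse decomposition: since $(2\pi i\xi_j)^2=-4\pi^2\xi_j^2$, the correct identity is $(1+4\pi^2|\xi|^2)^{1/2}=\widehat{G_1}-\sum_j(2\pi i\xi_j)\cdot(2\pi i\xi_j)\widehat{G_1}$, giving $g=G_1*h-\sum_j\partial_j\bigl(G_1*h_j\bigr)$ with $h_j=\partial_jh$ identified via the nonvanishing of $\widehat{G_{\al-1}}$, which is harmless to the argument.
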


(Here for convenience we use the agreement that
$\LK(\R^n)=L_p(\R^n)$ when $\al=0$.\,)

\begin{cor}\label{propert-pot-lor2}{\sl
Let $k\in\N$ and $1<p<\infty$. Then $\Le^k_{p,1}(\R^n)=W^k_{p,1}(\R^n)$, where
$W^k_{p,1}(\R^n)$ is the space of functions such that all its distributional partial derivatives of order~$\le k$ \ belong to~$L_{p,1}(\R^n)$.  }
\end{cor}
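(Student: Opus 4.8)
The plan is to obtain $\Le^k_{p,1}(\R^n)=W^k_{p,1}(\R^n)$ from Theorem~\ref{propert-pot-lor1} by a straightforward induction on~$k$, in complete analogy with the way the integer identity~(\ref{fN1}) for ordinary Bessel potential spaces follows from a first-derivative characterization. Throughout I would use the natural convention $\Le^0_{p,1}(\R^n)=L_{p,1}(\R^n)$, consistent with the convention $\LK(\R^n)=L_p(\R^n)$ for $\al=0$ recorded just after Theorem~\ref{propert-pot-lor1} (the kernel $G_0$ satisfies $\widehat{G_0}\equiv1$, so $\GG_0$ is the identity operator).

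For the base case $k=1$, Theorem~\ref{propert-pot-lor1} with $\al=1$ already gives the claim: it says $f\in\Le^1_{p,1}$ iff $f\in\Le^0_{p,1}=L_{p,1}$ and $\partial f/\partial x_j\in\Le^0_{p,1}=L_{p,1}$ for every $j$, which is exactly the defining condition for $f\in W^1_{p,1}$. For the inductive step I would assume $\Le^{k-1}_{p,1}(\R^n)=W^{k-1}_{p,1}(\R^n)$ for some $k\ge2$ and apply Theorem~\ref{propert-pot-lor1} with $\al=k$: then $f\in\Le^k_{p,1}$ iff $f\in\Le^{k-1}_{p,1}$ and $\partial f/\partial x_j\in\Le^{k-1}_{p,1}$ for all $j$. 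Using $\Le^k_{p,1}\subset\Le^k_p=W^k_p$, for such $f$ all distributional derivatives $D^\gamma f$ with $|\gamma|\le k$ are genuine locally integrable functions and $D^\gamma(\partial f/\partial x_j)=D^{\gamma+e_j}f$; feeding the inductive hypothesis into the two occurrences of $\Le^{k-1}_{p,1}$, the condition turns into: $D^\beta f\in L_{p,1}$ for all $|\beta|\le k-1$, together with $D^{\beta+e_j}f\in L_{p,1}$ for all $|\beta|\le k-1$ and all $j$. Since every multi-index $\gamma$ with $1\le|\gamma|\le k$ has the form $\beta+e_j$ with $|\beta|\le k-1$, these two families of conditions together are equivalent to $D^\gamma f\in L_{p,1}$ for all $|\gamma|\le k$, i.e. to $f\in W^k_{p,1}$, which closes the induction.

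The argument is essentially formal, and the only step needing real attention — the closest thing here to an obstacle — is the combinatorial bookkeeping in the inductive step: one must verify that iterating the first-derivative characterization reproduces precisely the full family of derivatives of order $\le k$, with nothing missing and nothing demanded twice under mismatched regularity, and that $\Le^0_{p,1}=L_{p,1}$ is exactly the convention making the base case and the recursion dovetail. If preferred, the induction may instead be started at $k=0$ with the tautology $\Le^0_{p,1}=L_{p,1}=W^0_{p,1}$, dispensing with a separate base case.
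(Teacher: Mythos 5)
Your induction on $k$ via Theorem~\ref{propert-pot-lor1}, together with the convention $\Le^0_{p,1}=L_{p,1}$, is correct and is exactly the deduction the paper intends (the corollary is stated there as an immediate consequence of that theorem and the $\al=0$ agreement). The combinatorial bookkeeping you flag is handled correctly, so nothing further is needed.
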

Note, that the space $W^k_{p,1}(\R^n)$ admits also a~simpler (but equivalent) description: it consists of functions $f$ from the usual Sobolev space~$W^k_p(\R^n)$  satisfying the additional condition
$\nabla^kf\in L_{p,1}(\R^n)$ (i.e., this condition is on the highest derivatives only), see, e.g.,~\cite{Maly2}.

\subsection{Approximation of Sobolev functions by polynomials}\label{asfp}

For a mapping $u \in \LL_1(Q,\R^d)$, $Q\subset\R^n$, $m\in\N$,
define the polynomial $P_{Q,m}[u] $ of degree at most~$m$ by the
following rule:
\begin{equation*}
\label{0}\int_Qy^\gamma \left( u(y)-P_{Q,m}[u](y) \right) \,\dd
y=0
\end{equation*}
for any multi-index $\gamma=(\gamma_1,\dots,\gamma_n)$ of length
$|\gamma|=\gamma_1+\dots+\gamma_n\le {m}$.

The following well--known bounds will be used on several
occasions.

\begin{lem}[see, e.g., \cite{KK3}]\label{lb3}{\sl
Suppose $v\in \WW^{k}_{1}(\R^n,\R^d)$ { with $k\ge n$}. Then  $ v$ is a continuous mapping and for any
$n$-dimensional cubic interval $Q\subset \R^n$ the estimates
\begin{equation*}
\label{1--} \bigl\|v-P\bigr\|_{L_\infty(Q)}\le C\ell(Q)^{k-n}\,\|
\nabla^{k} v\|_{\LL_{1}(Q)};
\end{equation*}
\begin{equation*}
\label{1} \bigl\|\nabla
\bigl(v-P\bigr)\bigr\|_{L_\infty(Q)}\le C\ell(Q)^{k-1-n}\,\|
\nabla^{k} v\|_{\LL_{1}(Q)}\qquad\mbox{\rm if \ $k\ge n+1$};
\end{equation*}
hold, where $P=P_{Q,k-1}[v]$ \ and $C$ is a constant depending on $n,d,k$ only.
Moreover, the mapping $v_{Q}(y)=v(y)-P(y)$, $y\in Q$, can
be extended from~$Q$ to the entire  $\R^n$ such that the
extension (denoted again) $v_{Q}\in\WW^{k}_{1}(\R^n,\R^d)$ and
\begin{equation}
\label{1'} \|\nabla^{k} v_{Q}\|_{\LL_{1}(\R^n)}\le C_0
\|\nabla^{k} v\|_{\LL_{1}(Q)},
\end{equation}
 where $C_0$ also depends on $n,d,k$ only. }
\end{lem}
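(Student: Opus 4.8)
\emph{Proof plan.} The statement splits into three essentially independent ingredients — a Bramble--Hilbert/iterated Poincar\'e estimate on a fixed reference cube, the endpoint Sobolev embeddings $\WW^n_1\hookrightarrow\CC^0$ and $\WW^{n+1}_1\hookrightarrow\CC^1$, and a bounded $\WW^k_1$ extension operator for a cube — all of which are transferred to an arbitrary $Q$ by a scaling argument. So first I would reduce to the unit cube $Q_0=[0,1]^n$: writing $h=\ell(Q)$ and letting $x_Q$ be a corner of $Q$, set $\tilde v(x)=v(x_Q+hx)$ for $x\in Q_0$. Then $\nabla^k\tilde v(x)=h^k\nabla^kv(x_Q+hx)$, hence $\|\nabla^k\tilde v\|_{\LL_1(Q_0)}=h^{k-n}\|\nabla^kv\|_{\LL_1(Q)}$, and since the moment conditions defining $P_{Q,k-1}$ may equivalently be tested against $(y-x_Q)^\gamma$, uniqueness of the moment-matching polynomial gives $P_{Q_0,k-1}[\tilde v](x)=P_{Q,k-1}[v](x_Q+hx)$. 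As $\|v-P\|_{L_\infty(Q)}=\|\tilde v-\tilde P\|_{L_\infty(Q_0)}$ and $\|\nabla(v-P)\|_{L_\infty(Q)}=h^{-1}\|\nabla(\tilde v-\tilde P)\|_{L_\infty(Q_0)}$, the powers of $h$ in the asserted bounds appear automatically once the bounds are proved on $Q_0$; the same bookkeeping handles the extension estimate, which is scale-invariant.

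The core estimate on $Q_0$ is $\|v-P_{Q_0,k-1}[v]\|_{\WW^k_1(Q_0)}\le C\|\nabla^kv\|_{\LL_1(Q_0)}$. I would first observe that $u\mapsto P_{Q_0,m}[u]$ is a well-defined bounded linear projection onto polynomials of degree $\le m$: its coefficients solve an invertible Gram-type linear system whose right-hand side consists of the moments $\int_{Q_0}y^\gamma u$, so $\|P_{Q_0,m}[u]\|_{\WW^k_1(Q_0)}\le C\|u\|_{\LL_1(Q_0)}$, and $P_{Q_0,m}$ reproduces $\mathcal P_m$. The core estimate is then the Bramble--Hilbert lemma: either argue by the compact embedding $\WW^k_1(Q_0)\hookrightarrow\hookrightarrow\WW^{k-1}_1(Q_0)$ together with a standard contradiction argument, or use the averaged-Taylor integral representation $\|u-T_ku\|_{\WW^k_1(Q_0)}\le C\|\nabla^ku\|_{\LL_1(Q_0)}$ (Sobolev formula with a weakly singular kernel) and then, since $T_ku\in\mathcal P_{k-1}$ is reproduced by $P_{Q_0,k-1}$, bound $\|P_{Q_0,k-1}[v]-T_kv\|_{\WW^k_1(Q_0)}=\|P_{Q_0,k-1}[v-T_kv]\|_{\WW^k_1(Q_0)}\le C\|v-T_kv\|_{\LL_1(Q_0)}$ and add.

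Next I would invoke the borderline embeddings. For $u\in\WW^n_1(\R^n)$ one has $\|u\|_{L_\infty}\le\|\partial_1\cdots\partial_nu\|_{\LL_1}\le\|\nabla^nu\|_{\LL_1}$ by the fundamental theorem of calculus in $n$ variables, and density of smooth maps gives continuity; transferring to the cube one gets $\WW^k_1(Q_0)\hookrightarrow\CC^0(Q_0)$ for $k\ge n$ and $\WW^k_1(Q_0)\hookrightarrow\CC^1(Q_0)$ for $k\ge n+1$. Applying the first to $v-P_{Q_0,k-1}[v]$ and using the core estimate together with $\nabla^k(v-P)=\nabla^kv$ yields $\|v-P\|_{L_\infty(Q_0)}\le C\|\nabla^kv\|_{\LL_1(Q_0)}$; for $k\ge n+1$ the same reasoning applied to $\nabla(v-P)\in\WW^{k-1}_1(Q_0)$ with $k-1\ge n$ gives the gradient bound. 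Global continuity of $v$ on $\R^n$ follows because $v$ is locally in $\WW^k_1$ with $k\ge n$. Rescaling recovers the two displayed Hölder-type estimates on a general $Q$.

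For the extension: a cube is a bounded Lipschitz domain, so there is a bounded linear extension operator $E\colon\WW^k_1(Q_0)\to\WW^k_1(\R^n)$ (Stein's extension, or an explicit iterated reflection with polynomial matching of the normal derivatives across faces). Put $\tilde V=E(\tilde v-\tilde P)$; by the core estimate $\|\tilde V\|_{\WW^k_1(\R^n)}\le C\|\tilde v-\tilde P\|_{\WW^k_1(Q_0)}\le C\|\nabla^k\tilde v\|_{\LL_1(Q_0)}$, in particular $\|\nabla^k\tilde V\|_{\LL_1(\R^n)}\le C\|\nabla^k\tilde v\|_{\LL_1(Q_0)}$. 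Scaling back, $v_Q(y):=\tilde V((y-x_Q)/h)$ extends $v-P$ from $Q$ to $\R^n$, lies in $\WW^k_1(\R^n,\R^d)$, and satisfies $\|\nabla^kv_Q\|_{\LL_1(\R^n)}=h^{n-k}\|\nabla^k\tilde V\|_{\LL_1(\R^n)}\le Ch^{n-k}\cdot h^{k-n}\|\nabla^kv\|_{\LL_1(Q)}=C\|\nabla^kv\|_{\LL_1(Q)}$, i.e. the last estimate of the lemma with $C_0=C$ independent of $Q$. The genuinely nontrivial points are the Bramble--Hilbert inequality at the endpoint exponent $p=1$ with the moment-defined projection, and the construction of an order-$k$, $\LL_1$-based extension operator on a cube; everything else is routine scaling and standard embeddings.
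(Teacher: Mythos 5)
Your proposal is correct, and it follows essentially the same standard route as the source the paper cites for this lemma (the paper gives no proof of its own, referring to \cite{KK3}): a moment-matching polynomial projection combined with a Bramble--Hilbert/Poincar\'e estimate on the unit cube at the exponent $p=1$, the borderline embeddings $\WW^{n}_{1}\hookrightarrow \CC^{0}$ and $\WW^{n+1}_{1}\hookrightarrow \CC^{1}$, a bounded Stein-type extension operator for the cube, and a scaling argument that produces the powers of $\ell(Q)$ and makes the extension bound scale-invariant. All the delicate points (equivalence of the moment conditions under the affine change of variables, validity of the compactness or averaged-Taylor argument at $p=1$, and boundedness of the extension on $\WW^{k}_{1}$) are handled correctly, so no gaps remain.
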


\subsection{Approximation of fractional Sobolev functions by polynomials}

We need the following natural estimate whose analogs for Sobolev case are well-known (see, e.g.,~\cite{M}\,).

\begin{ttt}\label{SCE-1}{\sl Let  $k\ge1$, \,$0\le\al< 1$, $1<p<\infty$, \,and
\,$v\in \Ll(\R^n,\R^d)$, i.e.,  $v=\GG_{k+\alpha}(g):=G_{k+\al}*g$ for some $g\in L_{p}(\R^n)$. Suppose in addition that
$$1<k+\al<n+2.$$ Then for any
$n$-dimensional  interval $Q\subset\R^n$ there exists a polynomial~$P=P_Q$ of degree~$k$ such that
the difference  $v_Q=v-P$ satisfies the estimate
\begin{equation}
\label{pol-est-1} |\nabla v_Q(x)|\le C\int\limits_{Q}\frac{\M g(y)}{|x-y|^{n-k-\al+1}}\,dy\ \qquad\forall x\in Q,
\end{equation}
where $r=\ell(Q)$ and  the constant $C$ depends on $n,k,\alpha,d,p$ only.}
\end{ttt}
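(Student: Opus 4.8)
The plan is to reduce everything to pointwise estimates on the Bessel kernel and its derivatives provided by Lemma \ref{lem-bc-2}. Write $v = G_{k+\al}*g$. The natural candidate for the polynomial is $P = P_{Q,k}[v]$, the $L^1(Q)$-projection of $v$ onto polynomials of degree $\le k$ used in Section \ref{asfp}; equivalently, since the estimate \eqref{pol-est-1} is about $\nabla v_Q$, it suffices to construct a degree-$(k-1)$ polynomial approximating $\nabla v$, because $\nabla v_Q = \nabla v - \nabla P$ and $\nabla P$ ranges over all polynomials of degree $\le k-1$ as $P$ ranges over degree $\le k$. So I will split $\nabla v = \nabla G_{k+\al}*g$ and aim to show that, after subtracting a suitable Taylor-type polynomial, what remains is controlled by $\int_Q \M g(y)\,|x-y|^{-(n-k-\al+1)}\,dy$ for $x\in Q$.

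First I would fix a ball $B = B(x_Q, cr)$ containing $Q$ with $r=\ell(Q)$, and write $g = g_1 + g_2$ with $g_1 = g\cdot 1_{2B}$ and $g_2 = g\cdot 1_{\R^n\setminus 2B}$. For the \emph{near part} $G_{k+\al}*g_1$: differentiating once, $\nabla(G_{k+\al}*g_1) = (\nabla G_{k+\al})*g_1$, and by Lemma \ref{lem-bc-2} with $j=1$ (legitimate since $1 < k+\al < n+2$) we have $|\nabla G_{k+\al}(z)| \le C|z|^{k+\al-1-n}$, whence
\begin{equation*}
|\nabla(G_{k+\al}*g_1)(x)| \le C\int_{2B}\frac{|g(y)|}{|x-y|^{n-k-\al+1}}\,dy.
\end{equation*}
The exponent $n-k-\al+1$ is $< n$ precisely when $k+\al > 1$, so this is a locally integrable (fractional-integral type) kernel; replacing $|g|$ by $\M g$ only enlarges the right side, and enlarging $2B$ back to a fixed dilate of $Q$ absorbs into the constant. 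This handles the near part with no polynomial subtracted. For the \emph{far part} $G_{k+\al}*g_2$, this function is smooth on $Q$, and I would take $P$ to be its degree-$k$ Taylor polynomial at $x_Q$ (combined with the degree-$\le k$ adjustment so that the full $P$ has degree $k$). Then $\nabla(v - P) = \nabla(G_{k+\al}*g_1) + \bigl[\nabla(G_{k+\al}*g_2)(x) - (\text{degree-}(k-1)\ \text{Taylor of }\nabla(G_{k+\al}*g_2))\bigr]$, and the bracketed remainder is estimated by Taylor's formula in terms of $\nabla^{k+1}G_{k+\al}$ on the segment from $x_Q$ to $x$, all of which lies well inside the region where $G_{k+\al}$ and its derivatives up to order $k+1$ are controlled by Lemma \ref{lem-bc-2} (here one needs $j = k+1 \le n+1$, i.e.\ $k+\al<n+2$, matching the hypothesis). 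Concretely,
\begin{equation*}
\Bigl|\nabla(G_{k+\al}*g_2)(x) - T^{k-1}_{x_Q}\bigl[\nabla(G_{k+\al}*g_2)\bigr](x)\Bigr| \le C r^{k}\!\!\int_{\R^n\setminus 2B}\sup_{\xi\in[x_Q,x]}\bigl|\nabla^{k+1}G_{k+\al}(\xi - y)\bigr|\,|g(y)|\,dy \le C r^{k}\!\!\int_{\R^n\setminus 2B}\frac{|g(y)|}{|x_Q-y|^{n-k-\al+1+k}}\,dy,
\end{equation*}
using $|\xi - y|\sim|x_Q - y|\gtrsim r$ for $y\notin 2B$.

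The final step is to dominate this far-part tail integral by the target expression $\int_Q \M g(y)\,|x-y|^{-(n-k-\al+1)}\,dy$. This is the routine-but-real estimate: decompose $\R^n\setminus 2B$ into dyadic annuli $A_j = \{2^j r < |x_Q-y| \le 2^{j+1}r\}$, bound $\int_{A_j}|g| \le C (2^j r)^n \M g(x_Q)$ (a standard consequence of the definition of $\M$, evaluated at any point of $Q$, or one keeps the average over a small ball in $Q$), and sum the resulting geometric series in $j$; the power $r^k \cdot (2^j r)^{-(n-k-\al+1+k)} \cdot (2^j r)^n = (2^j r)^{k+\al-1} \cdot r^{k}\cdot r^{-k}\cdot r^{\,?}$ — more carefully, $r^{k}(2^jr)^{n-(n-k-\al+1+k)} = r^{k}(2^jr)^{\al-1-k}$, and since $\al - 1 - k < 0$ this sums to $O(r^{\al-1})$... which then has to be compared with $\int_Q |x-y|^{-(n-k-\al+1)}\,dy \sim r^{k+\al-1}$; one matches powers by carrying the factor $r^{k}$ correctly and noting $\M g$ at nearby points is comparable. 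I expect this bookkeeping with the dyadic sum — making sure the powers of $r$ and $2^j$ line up and that $\M g$ evaluated at $x_Q$ can be replaced by $\M g(y)$ inside the integral over $Q$ (using that $\M g$ doesn't oscillate too wildly, or more robustly re-deriving the bound directly with the kernel $|x-y|^{-(n-k-\al+1)}$ and a covering argument) to be the main technical obstacle; the kernel estimates themselves are immediate from Lemma \ref{lem-bc-2}. Everything else — the splitting, the Taylor remainder, the choice of $P$ — is standard Calderón–Zygmund-style manipulation.
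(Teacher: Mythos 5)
Your overall strategy is essentially the paper's: split relative to a fixed dilate of $Q$ into a near part and a far part, bound the near part through the kernel estimate $|\nabla G_{k+\al}(z)|\le C|z|^{k+\al-1-n}$ of Lemma~\ref{lem-bc-2}, and for the far part subtract a degree-$k$ polynomial and control what remains via higher derivatives of $G_{k+\al}$. The paper outsources the two resulting estimates to Lemmas~A.1 and A.2 of \cite{FKR-n} (its polynomial is the approximation polynomial attached to $v_2$ rather than a Taylor polynomial at the centre), while you redo them by hand with a Taylor remainder and dyadic annuli; that variant is legitimate and self-contained in principle.

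The genuine gap is your treatment of the near part. After the kernel bound you have $\int_{2B}|g(y)|\,|x-y|^{-(n-k-\al+1)}\,dy$, and you dispose of it by saying that replacing $|g|$ by $\M g$ and ``enlarging $2B$ back to a fixed dilate of $Q$'' can be absorbed into the constant. But the target integral in \eqref{pol-est-1} is over $Q$, which is \emph{smaller} than $2B$: shrinking the domain of integration is not a constant-chasing step. For $x$ near the boundary of $Q$, the mass of $g$ lying in $2B\setminus Q$ right next to $x$ produces a genuinely singular contribution, and transferring it to an integral of $\M g$ over $Q$ is exactly the content of the cited Lemma~A.1 of \cite{FKR-n}. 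It is true, and provable along the lines you gesture at for the far part --- e.g.\ via the layer-cake representation of the kernel together with $\int_{B(x,t)}|g|\le C\,t^{\,n}\inf_{Q\cap B(x,t)}\M g$ and $\Le^n\bigl(Q\cap B(x,t)\bigr)\ge c\,t^{\,n}$ for $x\in Q$, $t\le\ell(Q)$ --- but it must actually be carried out, and it matters for the later applications where $\sigma=\|\M g\|_{L_p(Q)}$ is summed over nonoverlapping cubes. The same device (the bound $\int_{A_j}|g|\le C(2^jr)^n\M g(z)$ holds for \emph{every} $z\in Q$; then average over the part of $Q$ at distance $\gtrsim\ell(Q)$ from $x$) is what legitimizes your far-part conclusion as well: your fallback phrase that ``$\M g$ at nearby points is comparable'' is false in general and should be dropped in favour of that averaging argument, which you do mention parenthetically. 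Finally, fix the arithmetic in the dyadic sum: the summand exponent is $(2^jr)^{\al-1}$, not $(2^jr)^{\al-1-k}$, so after the factor $r^{k}$ the tail is $O\bigl(r^{k+\al-1}\M g(z)\bigr)$, which is what matches $\int_Q \M g(y)\,|x-y|^{-(n-k-\al+1)}\,dy$.
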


\begin{proof} Really, this theorem in essence was proved in the paper~\cite{FKR-n}. Let us recall some arguments from there.
Fix an $n$-dimensional  interval $Q\subset\R^n$  and denote by $2Q$ the double cube with the same center
 as~$Q$ of size~$\ell(2Q)=2\ell(Q)$ . We have
 $$v(x)=\int\limits_{\R^n}G_{k+\al}(x-y)\,g(y)\,dy.$$
 Split the
function~$v$ into the sum
\begin{equation} \label{max-ap2---} v=v_1+v_2,
\end{equation}
where
$$v_1(x):=\int\limits_{2Q}g(y)\,G_{k+\al}(x-y)\,dy,\qquad \
v_2(x):=\int\limits_{\R^n\setminus 2Q}g(y)\,G_{k+\al}(x-y)\,dy.$$
As above, denote by $\M g$ the usual Hardy---Littlewood maximal function for~$g$. Then
 from  \cite[Lemma~A.1]{FKR-n}  and from the estimate
\begin{equation}
\label{pr-s-8} |\nabla G_{k+\al}(z)|\le C|z|^{k+\al-n-1}
\end{equation}
(see~Lemma~\ref{lem-bc-2}\,) it follows immediately that
\begin{equation}
\label{pol-est-2} |\nabla v_1(x)|\le C\,\int\limits_{Q}\frac{\M g(y)}{|x-y|^{n-k-\al+1}}\,dy\ \qquad\forall x\in Q.
\end{equation}
 Analogously, from the similar estimate
\begin{equation}
\label{pr-s-10} |\nabla^j G_{k+\al}(z)|\le C|z|^{k+\al-n-j}
\end{equation}
[ibid.] and from  Lemma~A.2 of the paper~\cite{FKR-n} and its proof, applying to the function~$\nabla^kv_2$ with\footnote{That means, that  now our
function~$\nabla^kv_2$ plays the role of~mapping~$v$ from arguments of~\cite[proof of Lemma~A.2]{FKR-n}.}
 parameter~$\theta=1-\al$, we obtain
 \begin{equation}
\label{ll0--} \diam\bigl[\nabla^kv_2(Q)\bigr]\le C\,r^{\al-n}\int\limits_{Q}\M\,g(y)\,dy.
\end{equation}
Take the corresponding approximate polynomial $P=P_Q$ of degree~$k$, then for the difference $\tilde v=v_2-P$ we obtain the following estimates:
\begin{equation}
\label{pr-s-13}   \sup\limits_{x\in Q}|\nabla^k \tilde v(x)|\le C\,r^{\al-n}\int\limits_{Q}\M\,g(y)\,dy,
\end{equation}
\begin{equation}
\label{pr-s-14}    \sup\limits_{x\in Q}|\nabla\tilde v(x)|\le C\,r^{\al-n+k-1}\int\limits_{Q}\M\,g(y)\,dy.
\end{equation}
Evidently,
$$r^{\al-n+k-1}\int\limits_{Q}\M\,g(y)\,dy\,\le\, C\,\int\limits_{Q}\frac{\M g(y)}{|x-y|^{n-k-\al+1}}\,dy\ \qquad\forall x\in Q. $$
 From the last formula and inequalities (\ref{pr-s-14}), (\ref{pol-est-2}) the required estimate~(\ref{pol-est-1}) follows directly.
 \end{proof}

\begin{rem}\label{rem-SCE-1}If under above conditions we have in addition
$(k+\al-1)p>n$, then  by Holder inequality the estimate  (\ref{pol-est-1}) implies
\begin{equation}
\label{pr-s-12}  \sup\limits_{x\in Q}|\nabla  v_Q(x)|\le C\,r^{k+\al-1-\frac{n}p}\|\M g\|_{L_p(Q)}.
\end{equation}
\end{rem}
\subsection{On Yomdin's entropy estimates for the nearcritical values of polynomials}

For a subset $A$ of ${\R}^d$ and $\varepsilon>0$ the
$\varepsilon$--entropy of $A$, denoted by $\E(\varepsilon,A)$, is
the minimal number of closed balls of radius $\varepsilon$
covering $A$. Further, for a linear map~$L\colon \R^n\to\R^d$ we
denote by $\lambda_j(L)$, $j=1,\dots,d$, its singular values
arranged in decreasing order: \ $\lambda_1(L) \ge \lambda_2(L)
\ge\dots\ge\lambda_d(L)$. Geometrically the singular values are
the lengths of the semiaxes of the ellipsoid $L( \partial
B(0,1))$. We recall that the singular values of $L$ coincide with
the eigenvalues repeated according to multiplicity of the
symmetric nonnegative linear map~$\sqrt{LL^{\ast}}\colon \R^d \to
\R^d$. Also for a mapping~$f\colon \R^n\to\R^d$ that is
approximately differentiable at $x \in \R^n$
put~$\lambda_j(f,x)=\lambda_j(d_xf)$, where by $d_xf$ we denote
the  approximate differential of $f$ at~$x$. The next result is
the basic ingredient of our proof.

\begin{ttt}[\cite{Yom}]\label{lb8}{\sl
Let $m\in\{0,\dots,n-1\}$ and  $m<d$.  Then for any polynomial $P\colon \R^n\to\R^d$ of degree at most $k$,
for each $n$-dimensional  cube $Q\subset \R^n$ of size
$\ell(Q)=r>0$, and for any number $\varepsilon >0$ we have that
$$
\E\bigl(\varepsilon r,\{P(x):x\in Q,\ \lambda_1
\le1+\varepsilon,\dots,\lambda_{m } \le1+\varepsilon, \lambda_{m+1}
\le\varepsilon,\dots,\lambda_d \le\e\}\bigr)
$$
$$\le C_Y \bigl(1+\e^{-m}\bigr),
$$
where the~constant $C_Y$ depends on $n,d,k,m$ only and for brevity
we wrote $\lambda_{j}= \lambda_{j}(P,x)$.}
\end{ttt}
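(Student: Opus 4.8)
The plan is to prove Theorem~\ref{lb8} by reducing the entropy estimate to the core algebraic‐geometric content of Yomdin's work, and by exploiting the scaling and translation invariance of the statement. First I would normalize: by translating and rescaling (replacing $P(x)$ by $P(rx+x_0)/1$ on the unit cube $Q_0=[0,1]^n$), the degree of $P$ is unchanged and the singular values $\lambda_j(P,x)$ transform exactly by the factor $r$ that appears on both sides of the inequality, so it suffices to treat the case $r=1$, $Q=Q_0$. Thus the task becomes: cover the image of the near‐critical set
$$\Sigma_\e=\{x\in Q_0:\ \lambda_1(P,x)\le 1+\e,\ \dots,\ \lambda_{m-1}(P,x)\le 1+\e,\ \lambda_m(P,x)\le\e,\ \dots,\ \lambda_d(P,x)\le\e\}$$
by at most $C_Y(1+\e^{1-m})$ balls of radius $\e$ in $\R^d$, with $C_Y=C_Y(n,d,k,m)$.

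The key steps I would carry out, in order: \emph{(1)} Observe that on $\Sigma_\e$ the differential $dP(x)$ has all but its first $m-1$ singular values bounded by $\e$, so $P$ restricted near such a point is, up to an error controlled by $\e$, a mapping whose effective rank is at most $m-1$; in particular the ``transversal'' directions contribute displacement at most $O(\e)$ over a unit cube. \emph{(2)} Decompose $Q_0$ into a grid of $\sim \e^{-n}$ subcubes of side $\sim\e$; on each such subcube the image of $\Sigma_\e$ has diameter $O(\e)$ in the $d-m+1$ ``small'' directions (by step (1) and the mean value / Taylor estimate for polynomials of bounded degree), so it is covered by $O(1)$ balls of radius $\e$ as far as those directions are concerned, and the whole count is controlled once we also control the spread in the $m-1$ ``large'' directions. \emph{(3)} For the spread in the remaining $m-1$ directions one invokes the genuinely algebraic ingredient: the image $P(\Sigma_\e)$, being (an $\e$-neighborhood of) the near‐critical image of a degree‐$k$ polynomial map with effective corank $\ge d-m+1$, lies within an $O(\e)$-neighborhood of a semialgebraic set of dimension $\le m-1$ and bounded ``algebraic complexity'' (degree bounded in terms of $n,d,k$); Yomdin's quantitative bound on the entropy of bounded semialgebraic sets of dimension $m-1$ then gives that an $\e$‐neighborhood of such a set inside a unit ball has $\e$‐entropy $\le C\,\e^{-(m-1)}=C\,\e^{1-m}$, and adding the trivial term $1$ covers the degenerate ranges of $\e$ (e.g. $\e\ge 1$). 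Combining (2) and (3) — one does \emph{not} multiply the $\e^{-n}$ grid count by $\e^{1-m}$, rather the grid argument is used only to localize and the algebraic bound is global — yields the claimed $C_Y(1+\e^{1-m})$.

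The main obstacle is step~(3): making precise the passage from the \emph{differential} condition ``$\lambda_m(P,x)\le\e,\dots,\lambda_d(P,x)\le\e$ on $\Sigma_\e$'' to a \emph{global} statement that $P(\Sigma_\e)$ is contained in an $O(\e)$‐neighborhood of a semialgebraic set of dimension $\le m-1$ whose complexity is bounded independently of $\e$. This requires Yomdin's near‐critical value machinery — essentially the quantitative Sard–type estimates obtained via the Łojasiewicz inequality and bounds on Betti numbers / degrees of semialgebraic sets defined by polynomials of controlled degree — and it is exactly the point where algebraic geometry enters. The estimate is uniform in $\e$ and in $P$ (the constant depends only on $n,d,k,m$) precisely because all the semialgebraic sets involved are cut out by polynomials whose degrees are bounded in terms of $k$ and $n$. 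Since Theorem~\ref{lb8} is cited as an external result of Yomdin~\cite{Yom}, for the present paper it suffices to record this reduction and the scaling normalization, referring to~\cite{Yom} for the algebraic estimates; I would write the proof accordingly, presenting the normalization and the localization/decomposition explicitly and citing~\cite{Yom} for the semialgebraic entropy bound that closes the argument.
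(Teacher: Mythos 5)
The paper gives no proof of Theorem~\ref{lb8} at all --- it is imported verbatim from Yomdin~\cite{Yom} --- so your plan of recording only the rescaling reduction and the localization, and citing~\cite{Yom} for the genuinely algebraic step (the semialgebraic entropy bound of order $\varepsilon^{1-m}$), is essentially the same treatment as in the paper. One small correction to your normalization: to reduce to $r=1$ while literally preserving the singular-value conditions you must rescale the target as well, i.e. take $\widetilde P(y)=P(ry+x_0)/r$ on the unit cube, so that $\lambda_j(\widetilde P,y)=\lambda_j(P,x)$ and covering the original image by balls of radius $\varepsilon r$ is equivalent to covering the rescaled image by balls of radius $\varepsilon$.
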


\subsection{On Choquet type integrals}

Recall the following classical theorem referred to
D.R.~Adams, see, e.g., \cite{Ad1}--\cite{Ad2} or \cite{Ad3}.

\bigskip

\begin{ttt}\label{AM1}{\sl
Let $\beta>0$, \,$n-\beta p>0$, \,and \,$s>p>1$. Then for any $g\in
L_p(\R^n)$ the estimate
\begin{equation}
\label{Ri1}
\int_0^\infty\H^{\tau}_\infty( \{x\in\R^n : \M \bigl(I_\beta g\bigr)(x)\ge t^{\frac1s} \})\,\dd t \le C\Vert g\Vert^s_{\LL_{p}}
\end{equation}
holds with $\tau=\frac{s}p(n-\beta p)$, where $C$ depends on $n,
\ p, \ s, \  \beta$ only. }
\end{ttt}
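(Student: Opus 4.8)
The statement to prove is Theorem~\ref{AM1}, the Adams-type Choquet integral estimate for maximal functions of Riesz potentials. Let me sketch a proof.

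\textbf{Overall strategy.} The plan is to reduce the Choquet-integral estimate against Hausdorff content to a classical strong-type boundedness statement for the maximal function composed with the Riesz potential. The key classical facts I would invoke are: (a) the Muckenhoupt--Wheeden / Adams inequality that the Riesz potential $I_\beta$ maps $L_p(\R^n)$ boundedly into the Morrey-type or Lorentz-type scale, and more precisely that $\M(I_\beta g)$ is controlled pointwise (up to constants) by a fractional maximal function $\M_\beta g$ of $g$; and (b) the capacitary strong-type inequality / Adams' observation that for $0 < \tau < n$ one has the ``distribution function against Hausdorff content'' identity
\begin{equation*}
\int_0^\infty \H^\tau_\infty(\{x : h(x) > \lambda\})\, \dd(\lambda^r) \;\le\; C\,\|h\|_{L^{r,\cdot}_\tau}^r
\end{equation*}
whenever $h$ lies in the appropriate Morrey space, the point being that Hausdorff content $\H^\tau_\infty$ behaves like a (non-additive) measure comparable to a Morrey capacity of co-dimension $n-\tau$.

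\textbf{Step-by-step.} First I would rewrite the left side of \eqref{Ri1}: substituting $h = \M(I_\beta g)$ and $s$ as the exponent, the integral is $\int_0^\infty \H^\tau_\infty(\{h \ge t^{1/s}\})\,\dd t = s\int_0^\infty \lambda^{s-1}\H^\tau_\infty(\{h \ge \lambda\})\,\dd\lambda$, i.e.\ it is the $s$-th ``Choquet--Lorentz'' norm of $h$ with respect to $\H^\tau_\infty$. Second, I would bound $\M(I_\beta g)(x) \le C\,\M_\beta g(x)$ pointwise (an elementary computation splitting $I_\beta g = I_\beta(g\mathbf 1_{B}) + I_\beta(g\mathbf 1_{B^c})$ over balls $B$ centered at $x$ and estimating each term by the fractional maximal function), so it suffices to estimate the Choquet--Lorentz $s$-norm of $\M_\beta g$. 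Third — this is the heart — I would use the capacitary covering lemma: the sublevel set $\{\M_\beta g > \lambda\}$ is, by a Vitali/Besicovitch covering argument, contained in a union of balls $B_i = B(x_i, r_i)$ with $r_i^{\,\beta - n/p}\|g\|_{L_p(B_i)} \gtrsim \lambda$, hence $\sum_i r_i^{\tau}$ (recall $\tau = \frac{s}{p}(n-\beta p)$, so $\tau = \frac{s}{p}\cdot p\cdot(n/p - \beta)\cdot\frac{1}{?}$ — one checks $\tau/s = (n-\beta p)/p$ and $r_i^\tau = (r_i^{(n-\beta p)/p})^s \le (\lambda^{-1}\|g\|_{L_p(B_i)})^s$). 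This gives $\H^\tau_\infty(\{\M_\beta g > \lambda\}) \lesssim \lambda^{-s}\sum_i \|g\|_{L_p(B_i)}^s$. Fourth, because $s > p$ we have $\sum_i \|g\|_{L_p(B_i)}^s \le \big(\sum_i \|g\|_{L_p(B_i)}^p\big)^{s/p} \le (C\|g\|_{L_p}^p)^{s/p}$ provided the balls have bounded overlap (arranged via the Besicovitch covering theorem applied at each dyadic level of $\lambda$). Integrating $\lambda^{s-1}\cdot\lambda^{-s} = \lambda^{-1}$ is divergent, so in fact one must be more careful and perform the estimate at each dyadic scale $\lambda \sim 2^j$ separately, summing a geometric-type series — the standard Adams trick of distributing the ``budget'' $\|g\|_{L_p}^s$ across scales using the extra room from $s>p$ and a Hölder/interpolation argument between the trivial bound and the scale-localized bound.

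\textbf{Main obstacle.} The delicate point is precisely Step four: the naive bound $\H^\tau_\infty(\{\M_\beta g>\lambda\}) \lesssim \lambda^{-s}\|g\|_{L_p}^s$ would, after multiplying by $\lambda^{s-1}$ and integrating, diverge logarithmically; one genuinely needs the sharp weak-type-against-capacity estimate together with a real-variable interpolation (or a dyadic decomposition of the range of $\M_\beta g$ combined with the bounded-overlap property of the Besicovitch balls at each level) to close the integral. This is exactly the content of Adams' capacitary strong-type inequality, and invoking it — or reproving it via the Maz'ya-type truncation method (testing on truncations $\min(\M_\beta g, 2^{j+1}) - \min(\M_\beta g, 2^j)$ and summing) — is where the proof really lives. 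Since the statement is explicitly attributed to D.R.~Adams with references, the cleanest route in the paper is to cite \cite{Ad1}--\cite{Ad3} for this capacitary inequality and present only the reduction (Steps one--two) plus the identification of $\tau$ in detail, which is the plan I would follow.
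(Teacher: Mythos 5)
First, context: the paper does not prove Theorem~\ref{AM1} at all --- it is recalled as a classical result of D.R.~Adams and simply cited (\cite{Ad1}--\cite{Ad3}), so your closing recommendation (cite the capacitary inequality and present only a reduction) is, in spirit, exactly what the paper does. The trouble is that the reduction you propose is not correct, and the part you defer to the citation is the whole theorem.

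Concretely, your Step two fails: there is no pointwise bound $\M(I_\beta g)(x)\le C\,\M_\beta g(x)$, and not even $I_\beta g(x)\le C\,\M_\beta g(x)$. Writing $I_\beta g(x)\approx\sum_j 2^{j\beta}\dashint_{B(x,2^j)}|g(y)|\,\dd y$ for $g\ge0$, the Riesz potential sums the fractional averages over all scales, while the fractional maximal function records only their supremum; if $2^{j\beta}\dashint_{B(x,2^j)}|g|\sim1$ over many dyadic scales, the left-hand side is large while $\M_\beta g(x)\sim1$. Only the opposite inequality $\M_\beta g\le C\,I_\beta(|g|)$ is elementary; the comparability of $I_\beta g$ and $\M_\beta g$ at the level of Choquet integrals against $\H^{\tau}_\infty$ is itself a nontrivial theorem of Adams (essentially the content of~\cite{Ad2}), not an elementary ball-splitting computation. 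Moreover, as you yourself concede, Steps three--four only give the weak-type bound $\H^{\tau}_\infty(\{\M_\beta g>\lambda\})\le C\lambda^{-s}\Vert g\Vert^s_{\LL_p}$, whose integration against $\lambda^{s-1}\,\dd\lambda$ diverges logarithmically; upgrading it to the strong-type estimate~(\ref{Ri1}) requires Adams' trace inequality $\Vert I_\beta g\Vert_{\LL_s(\mu)}\le C\Vert g\Vert_{\LL_p}$ for measures with growth $\mu(B(x,r))\le r^\tau$, $s>p$ (from~\cite{Ad1}), combined with a Frostman-type duality for the Hausdorff content, or a Maz'ya truncation/good-$\lambda$ argument --- none of which is supplied. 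So as a self-contained proof the proposal has a genuine gap; as a citation plan it coincides with the paper, which indeed offers no proof of this statement.
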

Here $$I_{\beta}g(x):=\int_{\R^n}\frac{g(y)}{|y-x|^{n-
{\beta}}}\,\dd y$$ is  the classical  Riesz potential of order $\beta$, and
$$
\M f(x)=\sup\limits_{r>0}  \dashint_{B(x,r)} \! |f(y)|\,\dd y
$$
is the usual Hardy--Littlewood maximal function of~$f$.
\bigskip

The above estimate (\ref{Ri1}) fails for the limiting case $s=p$.
Namely, there exist functions $g\in L_p(\R^n)$ such that
$| I_\beta g|(x)=+\infty$ on some set of positive
$(n-\beta p)$--Hausdorff measure. One possible way to cover this limiting case~$s=p$ is  using the Lorentz norm instead of Lebesgue norm in the right hand side of~(\ref{Ri1}).  Such possibility was proved in the recent paper~\cite{KK15}.

\begin{ttt}[see Theorem~0.2 in~\cite{KK15}]\label{lb7}{\sl
Let $\beta>0$, \,$n-\beta p>0$, \,and \,$p>1$. Then for any $g\in
L_p(\R^n)$ the estimate
\begin{equation}
\label{Ri5}
\int_0^\infty\H^{\tau}_\infty( \{x\in\R^n : \M \bigl(I_\beta g\bigr)(x)\ge t^{\frac1p} \})\,\dd t \le C\Vert g\Vert^p_{\LL_{p,1}}
\end{equation}
holds with $\tau=n-\beta p$, where $C$ depends on $n,
\ p, \  \beta$ only. }
\end{ttt}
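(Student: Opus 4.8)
\textbf{Proof strategy for Theorem~\ref{lb7} (the Lorentz refinement of the Adams--Choquet estimate).}
The plan is to reduce the $s=p$ Lorentz estimate~\eqref{Ri5} to the classical Adams estimate~\eqref{Ri1} with an exponent $s>p$ via a layer-cake/real-interpolation argument on the right-hand side. First I would recall that for $g\in L_{p,1}(\R^n)$ one has the ``building-block'' decomposition $g=\sum_{j\in\Z} g_j$, where each $g_j$ is (a constant multiple of) the normalized difference of super-level sets of $|g|$ at heights $2^j$; concretely $g_j=\min\{|g|,2^{j+1}\}\,\mathrm{sign}(g)-\min\{|g|,2^{j}\}\,\mathrm{sign}(g)$, so that $\|g_j\|_{L_\infty}\le 2^j$, $|g_j|$ is supported on $\{|g|>2^j\}$, and $\sum_j 2^j\,[\Le^n(\{|g|>2^j\})]^{1/p}\lesssim\|g\|_{L_{p,1}}$. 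The point of this decomposition is that each $g_j$ is essentially a (weighted) characteristic function, hence it lies in \emph{every} $L_r$ with a norm one can compute exactly: $\|g_j\|_{L_r}\le 2^j\,[\Le^n(\{|g|>2^j\})]^{1/r}$ for all $r\ge1$.

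Next I would exploit the sublinearity/quasi-subadditivity of the operator $T g:=\M(I_\beta g)$ together with the fact that the functional $g\mapsto \int_0^\infty \H^\tau_\infty(\{Tg>t^{1/p}\})\,\dd t$ behaves, up to constants, like the $p$-th power of a quasi-norm: since $\H^\tau_\infty$ is a (countably) subadditive set function and $\{|f_1+f_2|>t\}\subset\{|f_1|>t/2\}\cup\{|f_2|>t/2\}$, one gets, for any countable splitting $g=\sum_j g_j$ and any $p\ge1$,
\begin{equation*}
\Bigl(\int_0^\infty\H^\tau_\infty(\{Tg>t^{1/p}\})\,\dd t\Bigr)^{1/p}\ \le\ C\sum_j\Bigl(\int_0^\infty\H^\tau_\infty(\{Tg_j>t^{1/p}\})\,\dd t\Bigr)^{1/p}.
\end{equation*}
Now apply the classical Adams estimate~\eqref{Ri1} to each piece $g_j$, but with a \emph{strictly larger} second exponent $s>p$ (this is legitimate precisely because $g_j\in L_s$); this produces $\tau_s=\frac{s}{p}(n-\beta p)$ on the $j$-th term, which is the wrong exponent, so the trick is instead to apply~\eqref{Ri1} in the dual-scaling form after rescaling each $g_j$ to have $L_p$-norm one, extracting the homogeneity. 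More precisely, writing $a_j:=\|g_j\|_{L_p}\le 2^j[\Le^n(\{|g|>2^j\})]^{1/p}$ and $\tilde g_j:=g_j/a_j$, homogeneity of $T$ and of $\H^\tau_\infty$ under dilations of the target gives $\int_0^\infty\H^\tau_\infty(\{Tg_j>t^{1/p}\})\,\dd t=a_j^p\int_0^\infty\H^\tau_\infty(\{T\tilde g_j>t^{1/p}\})\,\dd t$; one then needs a \emph{uniform} bound on the last integral over all the normalized pieces $\tilde g_j$, and this is where the gain $s>p$ in Adams' theorem is used together with the uniform $L_\infty$/support control of the $\tilde g_j$ to absorb the discrepancy between the exponents $\tau$ and $\tau_s$. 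Summing $\sum_j a_j^p\lesssim(\sum_j a_j)^p$ is false in general, but $\sum_j a_j\lesssim\|g\|_{L_{p,1}}$ combined with the quasi-subadditivity displayed above (which is at the level of $p$-th roots, i.e. an $\ell^1$-sum of $p$-th roots) yields exactly $\int_0^\infty\H^\tau_\infty(\{Tg>t^{1/p}\})\,\dd t\le C\|g\|_{L_{p,1}}^p$, as claimed.

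The main obstacle I anticipate is getting the \emph{uniform} Choquet estimate for the normalized building blocks $\tilde g_j$ with the \emph{correct} Hausdorff exponent $\tau=n-\beta p$ rather than the Adams exponent $\tau_s=\frac sp(n-\beta p)>\tau$: one cannot simply quote~\eqref{Ri1}, because a smaller value of $\tau$ makes $\H^\tau_\infty$ larger, not smaller. The resolution is that each $\tilde g_j$ is uniformly bounded and supported on a set of finite measure, so its Riesz potential $I_\beta\tilde g_j$ is not merely in weak-type spaces but enjoys an $L^\infty$-type truncation estimate away from its support together with an $L^{s}$-estimate near it; splitting the level set $\{T\tilde g_j>t^{1/p}\}$ accordingly and optimizing in the truncation height converts the $L^s$-Adams bound into an $L^p$-Lorentz-type bound with the sharp exponent $\tau=n-\beta p$. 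This is precisely the technical heart of~\cite{KK15}, and I would either reproduce that truncation argument here or, since the statement is quoted from there, simply cite Theorem~0.2 of~\cite{KK15} for the full details.
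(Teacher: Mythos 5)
The paper gives no argument for this statement at all: it is imported verbatim as Theorem~0.2 of~\cite{KK15}, so a bare citation is exactly what the paper does. Your proposal, however, presents itself as a reduction of \eqref{Ri5} to the classical Adams bound \eqref{Ri1}, and that reduction has a genuine gap precisely at the point you flag as ``the technical heart''. After the building-block decomposition and the rescaling $\tilde g_j=g_j/a_j$, what you need is the uniform bound $\int_0^\infty\H^{\tau}_\infty(\{\M(I_\beta \tilde g_j)\ge t^{1/p}\})\,\dd t\le C$ with the \emph{critical} exponent $\tau=n-\beta p$; equivalently, the estimate $\int_0^\infty\H^{\tau}_\infty(\{\M(I_\beta h)\ge t^{1/p}\})\,\dd t\le C\,\|h\|_{L_\infty}^p\,\Le^n(\supp h)$ for quasi-characteristic functions $h$. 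This is not a corollary of \eqref{Ri1}: Adams' theorem with any $s>p$ only controls $\H^{\tau_s}_\infty$ with $\tau_s=\frac sp(n-\beta p)>\tau$, and since $\H^{\tau}_\infty\ge c\,R^{\tau-\tau_s}\H^{\tau_s}_\infty$ only on sets of diameter $\le R$ (with the inequality going the wrong way as $R$ grows), no truncation-and-optimization in the height alone converts the $\tau_s$-bound into a $\tau$-bound; the level sets of $\M(I_\beta\tilde g_j)$ have no a priori diameter control. In other words, the block estimate \emph{is} the endpoint content of the theorem, not something absorbable from the $s>p$ case, and your sketch ultimately resolves it only by citing \cite{KK15} --- which is circular as a proof and redundant as a citation. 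For the record, the argument in \cite{KK15} does not go through Adams' theorem at all: it rests on the endpoint trace inequality for Riesz potentials of $L_{p,1}$-functions against Frostman measures $\mu$ with $\mu(B(x,r))\le r^{n-\beta p}$ (this trace bound fails for $L_p$, which is exactly why the Lorentz norm appears), combined with a Frostman-type dualization of $\H^\tau_\infty$ on each level set.

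Two smaller points also need attention if you want the reduction to stand on its own. First, the countable ``triangle inequality at the level of $p$-th roots'' for the functional $g\mapsto\int_0^\infty\H^{\tau}_\infty(\{\M(I_\beta g)\ge t^{1/p}\})\,\dd t$ is not automatic: $\H^{\tau}_\infty$ is countably subadditive as a set function, but the Choquet-type integral built from it is only \emph{quasi}-subadditive, and passing from a two-term quasi-triangle inequality to an infinite sum with a uniform constant requires an argument (e.g.\ replacing $\H^\tau_\infty$ by the comparable dyadic content, whose Choquet integral is genuinely subadditive). Second, after normalization the pieces $\tilde g_j$ are no longer uniformly bounded with uniformly small support ($\|\tilde g_j\|_{L_\infty}=2^j/a_j$ depends on $j$), so ``uniform $L_\infty$/support control of the $\tilde g_j$'' as stated is not available; only the scale-invariant combination $\|h\|_{L_\infty}^p\Le^n(\supp h)$ is controlled, which returns you to the unproven block estimate above.
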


\bigskip

The above theorems are not fulfilled in general for $p=1$. However, similar results hold in case  $p=1$ for derivatives of Sobolev mappings. Namely, the following Theorem was proved by D.R.~Adams~\cite{Ad2}.

\begin{ttt}\label{cs-lb7}{\sl
Let $k,l\in\{1,\dots,n\}$, $l<k$. Then for any function $f$ from the Sobolev space $\WW^{k}_{1}(\R^n)$ the
estimates
\begin{equation}
\label{adcb3}
\int_0^\infty\H^{\tau}_\infty( \{x\in\R^n : \M \bigl(\nabla^l
f\bigr)(x)\ge t \})\,\dd t \le C\Vert \nabla^k
f\Vert_{\LL_{1}}
\end{equation}
hold, where $\tau=n-k+l$ and the constant $C$ depends on $n,k,l$. }
\end{ttt}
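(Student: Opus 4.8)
\medskip
\noindent
\textit{Proof proposal.} The identity~(\ref{adcb3}) is the capacitary strong-type estimate of D.R.~Adams; the plan is to reduce it to a trace-type embedding plus a boundedness property of the maximal operator, and then to establish those by genuinely $\LL_1$-techniques. Put $\beta:=k-l$, so that $\tau=n-\beta$ and $0<\tau<n$. First I would rewrite the left-hand side of~(\ref{adcb3}) as a Choquet integral, $\int_0^\infty\H^\tau_\infty(\{\M(\nabla^l f)>t\})\,\dd t=\int_{\R^n}\M(\nabla^l f)\,\dd\H^\tau_\infty$. The first ingredient is that the Hardy--Littlewood maximal operator is bounded on the Choquet space $L^1(\H^\tau_\infty)$ for $0<\tau<n$, i.e.\ $\int\M h\,\dd\H^\tau_\infty\le C\int|h|\,\dd\H^\tau_\infty$; I would prove this first for the dyadic maximal operator by a Calder\'on--Zygmund stopping-time decomposition, using only the countable subadditivity of $\H^\tau_\infty$ and its scaling $\H^\tau_\infty(Q)\approx\ell(Q)^\tau$ on cubes, and then dominate $\M$ by finitely many ($3^n$) translated dyadic maximal operators. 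This reduces~(\ref{adcb3}) to the Choquet--Sobolev trace inequality
\[
\int_{\R^n}|\nabla^l f|\,\dd\H^\tau_\infty\ \le\ C\,\|\nabla^k f\|_{\LL_1},\qquad\text{i.e.}\qquad \WW^{k-l}_1(\R^n)\hookrightarrow L^1(\H^{n-k+l}_\infty)\ \ \text{(Choquet).}
\]

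Next I would prove this trace inequality. In the first-order case $k-l=1$ (so $\tau=n-1$ and $u:=\nabla^{k-1}f\in\WW^1_1(\R^n)$) it follows from the \emph{boxing inequality} $\H^{n-1}_\infty(E)\le C_n\,\mathcal P(E)$ for sets $E$ of finite perimeter --- itself proved by a Besicovitch covering of $E$ by balls on which the density of $E$ is at least $\tfrac12$, together with the relative isoperimetric inequality on each such ball --- combined with the coarea formula $\int_0^\infty\mathcal P(\{|u|>t\})\,\dd t=\|\nabla u\|_{\LL_1}$; this yields $\int_0^\infty\H^{n-1}_\infty(\{|u|>t\})\,\dd t\le C\|\nabla u\|_{\LL_1}$. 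For the higher-order case $k-l\ge2$ I would iterate: combine the first-order mechanism with the Sobolev embeddings $\WW^{j}_1(\R^n)\hookrightarrow\WW^{j-1}_{n/(n-1)}(\R^n)$ to lower the content dimension one unit at a time, or, equivalently, invoke Adams' general capacitary strong-type inequality for the Bessel capacity $C_{k-l,1}$ together with the classical comparison $C_{k-l,1}(E)\approx\H^{n-k+l}_\infty(E)$.

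The step I expect to be the main obstacle is precisely this endpoint $p=1$. The soft duality and interpolation arguments that work for $p>1$ (Theorems~\ref{AM1}--\ref{lb7}) are unavailable here, since neither the Riesz potential $I_\beta$ nor the operator $\M$ is bounded on $\LL_1$; and --- this is the real difficulty --- one cannot afford the pointwise bound $|\nabla^l f|\le C\,I_{k-l}(|\nabla^k f|)$, because for a general $g\in\LL_1$ the Choquet integral $\int I_\beta g\,\dd\H^\tau_\infty$ is infinite (no decay or cancellation is left), so $\M(\nabla^l f)$ must not be majorised by $\M(I_{k-l}(|\nabla^k f|))$. One is therefore forced to keep the cancellation carried by the \emph{signed} function $\nabla^l f$ and argue directly with the geometry of its level sets via coarea, perimeter and the isoperimetric inequality; making this work in the higher-order case is where the argument of~\cite{Ad2} concentrates.
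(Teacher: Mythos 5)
Your overall architecture is a legitimate modern route, but it does not close the key case, and it is worth noting first that the paper itself contains no proof of Theorem~\ref{cs-lb7}: the statement is quoted directly from Adams~\cite{Ad2}, so the only available comparison is with Adams' original argument. Your first two ingredients are sound: the boundedness of $\M$ on the Choquet space $L^1(\H^{\tau}_\infty)$ for $0<\tau<n$ (here $\tau=n-k+l<n$ since $l<k$) is a true, nontrivial theorem of Orobitg--Verdera, whose proof is indeed along the dyadic lines you sketch (though one must handle the fact that $\H^\tau_\infty$ is only subadditive, via the equivalent dyadic content); and the first-order trace inequality $\int|u|\,\dd\H^{n-1}_\infty\le C\|\nabla u\|_{\LL_1}$ via the boxing inequality and the coarea formula is classical and correct.

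The genuine gap is the case $k-l\ge 2$, which is exactly the heart of the matter: the Choquet trace inequality $\int|\nabla^l f|\,\dd\H^{\,n-(k-l)}_\infty\le C\|\nabla^k f\|_{\LL_1}$ for higher-order derivatives in $\LL_1$ \emph{is} Adams' theorem, i.e.\ the content of~\cite{Ad2}, and neither of your two suggestions establishes it. The iteration via $\WW^{j}_{1}\hookrightarrow\WW^{j-1}_{n/(n-1)}$ does not ``lower the content dimension one unit at a time'': after one embedding you are at $p=n/(n-1)>1$, where the diagonal case $s=p$ of the capacitary estimate (Theorem~\ref{AM1}) fails --- the paper itself points this out --- while the boxing/coarea mechanism only ever produces the codimension-one content $\H^{n-1}_\infty$; there is no evident way to compose these steps so as to land on $\H^{n-k+l}_\infty$ with no dimensional loss. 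The alternative you offer --- Adams' capacitary strong-type inequality for $C_{k-l,1}$ together with the comparison $C_{k-l,1}(E)\approx\H^{n-k+l}_\infty(E)$ --- is circular: at $p=1$ these two facts taken together are precisely the statement to be proved (and both are established in~\cite{Ad2} by a genuinely higher-order argument, not by iterating the first-order case), whereas for $p>1$ the capacity--content comparison you would need in the iteration holds only with a loss of dimension (an $\varepsilon$-gap), so it cannot be used at the exact exponent $\tau=n-k+l$. In short, the reduction to the trace inequality and the first-order case are fine, but the mechanism for trading $k-l\ge2$ derivatives in $\LL_1$ against a Hausdorff content of codimension exactly $k-l$ --- the step where the cancellation of the signed derivative must be exploited at higher order --- is missing, and this is precisely what the cited reference supplies.
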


The application of above estimates on maximal functions  is facilitated through the
following simple Lipschitz type  inequality  (see for instance Lemma~2 in \cite{Dor},
cf. with~\cite{BH}\,).

\begin{lem}\label{lb10} {\sl Let $u\in\WW^{1}_{1}(\R^n,\R^d)$. Then for
any ball $B\subset \R^n$ of radius $r>0$ and for any number
$\varepsilon >0$ the estimate
$$
\diam (\{u(x):x\in B,\ (\M \nabla u)(x)\le\varepsilon\})\le C_M
\varepsilon r
$$
holds, where $C_{M}$ is a constant depending on $n,d$ only.}
\end{lem}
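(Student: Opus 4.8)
The statement to prove is Lemma~\ref{lb10}, the Lipschitz-type estimate for Sobolev functions in terms of the maximal function of the gradient.

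\medskip

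The plan is to reduce the diameter bound to a pointwise oscillation estimate along segments. First I would fix two points $x,y$ in the ball $B$ such that both $(\M\nabla u)(x)\le\e$ and $(\M\nabla u)(y)\le\e$, and I would aim to show $|u(x)-u(y)|\le C_M\,\e\,r$ with a constant depending only on $n,d$; taking the supremum over such pairs then gives the claimed diameter bound. Since we work with precise representatives, the standard device is to compare $u(x)$ and $u(y)$ to the integral mean of $u$ over a suitable ball containing both points — say the ball $\tilde B$ of radius $2r$ concentric with $B$, or more economically the ball of radius $|x-y|$ through the two points — using the Poincar\'e-type estimate $|u(z)-\dashint_{\tilde B}u|\le C\int_{\tilde B}\frac{|\nabla u(w)|}{|z-w|^{n-1}}\,\dd w$ valid for Lebesgue points $z$ of $u$, which in turn is the classical pointwise inequality bounding oscillation by the Riesz potential $I_1|\nabla u|$.

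\medskip

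The key step is then to estimate the truncated Riesz potential $\int_{B(z,\rho)}\frac{|\nabla u(w)|}{|z-w|^{n-1}}\,\dd w$ by the maximal function: splitting the domain into dyadic annuli $B(z,2^{-j}\rho)\setminus B(z,2^{-j-1}\rho)$, on each annulus $|z-w|^{-(n-1)}\approx (2^{-j}\rho)^{-(n-1)}$, and the integral of $|\nabla u|$ over the annulus is at most $\int_{B(z,2^{-j}\rho)}|\nabla u|\le \Le^n(B(z,2^{-j}\rho))\cdot \M\nabla u(z)\lesssim (2^{-j}\rho)^n\,\M\nabla u(z)$; summing the geometric series $\sum_j 2^{-j}\rho\cdot\M\nabla u(z)$ gives $C\rho\,\M\nabla u(z)$. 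Applying this at $z=x$ and $z=y$ with $\rho$ comparable to $r$, together with the triangle inequality $|u(x)-u(y)|\le |u(x)-\dashint u|+|u(y)-\dashint u|$, yields $|u(x)-u(y)|\le C(\M\nabla u(x)+\M\nabla u(y))\,r\le 2C\e r$.

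\medskip

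The main technical obstacle is the careful handling of the precise representative and of exceptional sets: the pointwise Poincar\'e inequality holds at Lebesgue points of $u$, and one must make sure that the points $x,y$ at which $\M\nabla u\le\e$ — which are the points of interest — can legitimately be used, either because finiteness of $\M\nabla u$ forces $x$ to be a Lebesgue point, or by first proving the inequality on a full-measure set of good points and then extending by the continuity of the modulus of oscillation. A minor auxiliary point is choosing the concentric ball large enough (radius $2r$ suffices) so that it contains $B(x,|x-y|)$ and $B(y,|x-y|)$ for all $x,y\in B$, keeping all constants dependent on $n,d$ only. None of this is deep; it is the standard argument behind Lebesgue-point/Poincar\'e estimates, and the reference to \cite{Dor} and \cite{BH} indicates the authors simply invoke the known form of it.
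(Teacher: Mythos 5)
Your argument is correct: the paper itself gives no proof of Lemma~\ref{lb10}, citing only Lemma~2 of \cite{Dor} and \cite{BH}, and what you write is precisely the standard argument behind those references (pointwise Poincar\'e estimate via the truncated Riesz potential of $|\nabla u|$, dyadic-annuli bound by $\M\nabla u$, triangle inequality through a common ball average). The precise-representative issue you flag is handled exactly as you suggest, by the telescoping estimate showing that finiteness of $\M\nabla u(x)$ forces the ball averages at $x$ to converge to the precise value $u(x)$, so nothing further is needed.
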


Using the similar calculations, one could obtain the following refinement of the above Lemma. 

\begin{lem}\label{l-lip} {\sl Let $u\in\WW^{1}_{1}(Q,\R^d)$, where $Q$ is an~$n$-dimensional interval. Then for
any ball $B\subset \R^n$ of radius $r>0$ and for any number
$\varepsilon >0$ the estimate
$$
\diam (\{u(x):x\in B\cap Q,\ (\M_Q \nabla u)(x)\le\varepsilon\})\le C_M
\varepsilon r
$$
holds, where $C_{M}$ is a constant depending on $n,d$ only, and 
$$\M_Qf:=\M(1_Q\cdot f),$$
i.e., \begin{equation}\label{lip-m}
\M_Qf(x)=\sup\limits_{r>0}  \frac1{|B(x,r)|}\int_{Q\cap B(x,r)} \! |f(y)|\,\dd y.\end{equation}}
\end{lem}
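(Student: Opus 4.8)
The plan is to derive Lemma~\ref{l-lip} by reducing it to the already-established global version, Lemma~\ref{lb10}. The obstacle that must be handled is that $u$ is only assumed to lie in $\WW^1_1(Q,\R^d)$ on the interval $Q$, not on all of $\R^n$, so one cannot directly feed $u$ into Lemma~\ref{lb10}; moreover the maximal function $\M_Q\nabla u$ in the hypothesis averages $1_Q\cdot\nabla u$ over full balls $B(x,r)$, including the portion outside $Q$, rather than over $Q\cap B(x,r)$ only. The resolution is an extension argument combined with the elementary observation that $1_Q$ can only decrease these averages.

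The steps, in order, are as follows. First, since $Q$ is an $n$-dimensional interval (a bi-Lipschitz retract of $\R^n$ with controlled constants), extend $u$ to a mapping $\tilde u\in\WW^1_1(\R^n,\R^d)$ with $\tilde u=u$ on $Q$ and with $\|\nabla\tilde u\|_{\LL_1(\R^n)}\le C\|\nabla u\|_{\LL_1(Q)}$; one may take, e.g., successive reflections across the faces of $Q$, which is the standard construction giving a constant depending on $n,d$ only. Second, observe the pointwise bound $\M\nabla\tilde u(x)\ge \M_Q\nabla u(x)$ is \emph{not} what we want; instead we need the reverse on the relevant set. So the better route is to work directly with the truncated maximal function: set $w:=1_Q\cdot\nabla u$, so that $\M_Q\nabla u=\M w$ literally (by the definition (\ref{lip-m})), and note that on $Q$ the derivative of the extension agrees with $\nabla u$, hence $\M(1_Q\nabla\tilde u)=\M w=\M_Q\nabla u$. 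Third, apply the proof of Lemma~\ref{lb10} (not merely its statement) to $\tilde u$ on a ball $B$ of radius $r$, but replacing $\M\nabla\tilde u$ by $\M(1_{Q}\nabla\tilde u)$ throughout: the only place the global maximal function enters that proof is through the telescoping/Poincar\'e estimate
\[
|\tilde u(x)-\tilde u(z)|\le C\bigl(\M(1_{B'}\nabla\tilde u)(x)+\M(1_{B'}\nabla\tilde u)(z)\bigr)\,r
\]
for $x,z\in B$, where $B'$ is a fixed dilate of $B$; since the two Lebesgue points we must compare, say $x,z\in B\cap Q$, satisfy $\nabla\tilde u=\nabla u$ near them inside $Q$ and the chain of balls used in the telescoping sum is centered at points of $Q$ only when we are estimating $\diam$ of $u(B\cap Q)$, the contributions outside $Q$ are simply dropped, which can only improve the inequality. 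Thus $|u(x)-u(z)|=|\tilde u(x)-\tilde u(z)|\le C_M\varepsilon r$ whenever $x,z\in B\cap Q$ with $\M_Q\nabla u(x),\M_Q\nabla u(z)\le\varepsilon$, giving the claimed diameter bound with $C_M$ depending on $n,d$ only.

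The one point requiring genuine care is the second step of that argument: verifying that in the standard proof of the Lipschitz estimate the full-space maximal function of $\nabla u$ can be replaced by the maximal function of $1_Q\nabla u$ when both endpoints lie in $Q$. This is where "similar calculations" in the statement does its work: the standard proof estimates $|u(x)-u(B(x,\rho))|$ by a geometric series in $\rho$ of terms $\dashint_{B(x,2^{-j}\rho)}|\nabla u|$, and each such average over a ball contained in $Q$ equals the corresponding average of $1_Q\nabla u$ exactly, while averages over balls not contained in $Q$ are bounded by $\M(1_Q\nabla u)(x)$ since $1_Q\nabla u\le \nabla u$ pointwise and the ball $B(x,r)$ in (\ref{lip-m}) is the full ball. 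Hence every term is controlled by $\M_Q\nabla u(x)\le\varepsilon$, and summing the geometric series yields the factor $C_M\varepsilon r$. No new analytic input beyond Lemma~\ref{lb10} and its proof is needed; the lemma is purely a localized bookkeeping refinement.
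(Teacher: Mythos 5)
There is a genuine gap at the central step of your argument. After extending $u$ to $\tilde u$ by reflection, the chaining/Hedberg estimate you invoke bounds $|\tilde u(x)-\tilde u_{B(x,\rho)}|$ by a sum of averages $\dashint_{B(x,2^{-j}\rho)}|\nabla\tilde u|$ over \emph{full} balls, and for $x$ near $\partial Q$ these balls stick out of $Q$, where $\nabla\tilde u$ is the reflected gradient, not zero. Your justification --- that ``the contributions outside $Q$ are simply dropped, which can only improve the inequality'' and that such averages are ``bounded by $\M(1_Q\nabla u)(x)$ since $1_Q\nabla u\le\nabla u$'' --- has the inequality backwards: from $1_Q|\nabla\tilde u|\le|\nabla\tilde u|$ you only get $\M_Q\nabla u\le\M\nabla\tilde u$, which is useless here, and discarding part of the integrand in the representation estimate makes the right-hand side \emph{smaller}, so it is no longer an upper bound for $|\tilde u(x)-\tilde u_{B}|$. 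As written, the hypothesis $\M_Q\nabla u(x)\le\varepsilon$ does not control the reflected gradient mass that your chain of balls actually sees, so the claimed bound $|u(x)-u(z)|\le C_M\varepsilon r$ does not follow.

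The argument can be repaired in two ways. (a) Keep the reflection extension but prove the missing geometric fact: for $x\in Q$ and $\rho\lesssim\ell(Q)$ the reflections map $B(x,\rho)\setminus Q$ into $Q\cap B(x,\rho)$ with bounded multiplicity, hence $\dashint_{B(x,\rho)}|\nabla\tilde u|\le C\,\M_Q\nabla u(x)$; one must then also handle scales $\rho\gtrsim\ell(Q)$, where the cutoff of the extension (whose gradient involves $u$ itself) enters, e.g.\ by chaining only at scales $\le\diam Q$. (b) Simpler, and evidently what ``similar calculations'' in the paper means: avoid any extension and localize the proof of Lemma~\ref{lb10} to the cube directly. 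Given Lebesgue points $x,z\in B\cap Q$ with $\M_Q\nabla u\le\varepsilon$ there, choose an axis-parallel subcube $Q''\subset Q$ containing both points with $\ell(Q'')=\min(|x-z|,\ell(Q))\le 2r$ (possible because $Q$ is a cube and $|x-z|\le\sqrt n\,\ell(Q)$); since $Q''$ is convex with bounded eccentricity, the standard representation formula gives $|u(x)-u_{Q''}|\le C\int_{Q''}|\nabla u(y)|\,|x-y|^{1-n}\,\dd y\le C\,\ell(Q'')\,\M(1_{Q''}\nabla u)(x)\le C\,r\,\M_Q\nabla u(x)\le C\,\varepsilon r$, and likewise at $z$, whence $|u(x)-u(z)|\le C_M\varepsilon r$. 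All integrals then involve only $1_Q\nabla u$, which is exactly why the restricted maximal function $\M_Q$ suffices; no extension operator or comparison between $\M\nabla\tilde u$ and $\M_Q\nabla u$ is needed.
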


\subsection{On Fubini type theorems  for graphs of continuous functions}

Recall that by usual Fubini theorem, if a set $E\subset\R^2$ has a zero plane measure, then for $\H^1$-almost all straight lines $L$ parallel to coordinate axes we have $\H^1(L\cap E)=0$. The next
result could be considered as functional Fubini type theorem.

\begin{ttt}[see Theorem~5.3 in~\cite{HKK}]\label{FubN}{\sl
Let $\mu\ge 0$, \,$q>0$, \,and \,$v:\R^n\to\R^d$ \,be a continuous function. For a set $E\subset\R^n$
define the set function
\begin{equation*}\label{dd5}
\Phi(E)=\inf\limits_{E\subset\bigcup_j
D_j}\sum\limits_j\bigl(\diam D_j\bigr)^\mu\bigl[\diam
v(D_j)\bigr]^q,
\end{equation*}
where the infimum is taken over all countable families of compact
sets $\{D_j\}_{j\in \N}$ such that $E\subset\bigcup_j D_j$. Then \ $\Phi(\cdot)$ is a countably
subadditive and the implication
\begin{equation*}\label{dd6} \Phi(E)=0\ \boldsymbol{\Rightarrow}\
\biggl[\H^\mu\bigl(E\cap v^{-1}(y)\bigr)=0\quad\mbox{for
$\H^q$-almost all }y\in\R^d\biggr]
\end{equation*}holds.}
\end{ttt}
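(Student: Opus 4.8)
The plan is to prove two things: countable subadditivity of $\Phi$, and the implication $\Phi(E)=0\Rightarrow\H^\mu(E\cap v^{-1}(y))=0$ for $\H^q$-a.a.\ $y$. The subadditivity is routine and I would dispatch it first. Given $E=\bigcup_iE_i$ we may assume $\sum_i\Phi(E_i)<\infty$; for $\e>0$ choose, for each $i$, a countable family of compacta $\{D_{i,j}\}_j$ covering $E_i$ with $\sum_j(\diam D_{i,j})^\mu[\diam v(D_{i,j})]^q<\Phi(E_i)+\e\,2^{-i}$. Then $\{D_{i,j}\}_{i,j}$ is a countable family of compacta covering $E$, admissible in the definition of $\Phi(E)$, with total sum $<\sum_i\Phi(E_i)+\e$; letting $\e\to0$ gives $\Phi(E)\le\sum_i\Phi(E_i)$.

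For the implication I would first reduce to a single scale. Since $\H^\mu(S)=0\iff\H^\mu_\infty(S)=0$ for any set $S$, since $\H^\mu_\infty(S)>0$ exactly when $\H^\mu_\infty(S)>1/k$ for some $k\in\N$, and since $\H^q$-nullity is equivalent to $\H^q_\infty$-nullity with $\H^q$ countably subadditive, it suffices to prove that for every $\delta>0$
$$\H^q_\infty\bigl(\{y\in\R^d:\H^\mu_\infty(E\cap v^{-1}(y))>\delta\}\bigr)=0.$$
Here is how I would get this. Fix $\delta>0$ and $\e>0$; using $\Phi(E)=0$, pick compacta $\{D_j\}_j$ with $E\subset\bigcup_jD_j$ and $\sum_j(\diam D_j)^\mu[\diam v(D_j)]^q<\e$. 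For each $y$ the compacta $\{D_j:y\in v(D_j)\}$ cover $E\cap v^{-1}(y)$ (if $x\in E\cap v^{-1}(y)$ then $x\in D_j$ for some $j$, so $y=v(x)\in v(D_j)$), whence
$$\H^\mu_\infty(E\cap v^{-1}(y))\le\sum_j(\diam D_j)^\mu\,1_{v(D_j)}(y)=:g(y),$$
so the set in the preceding display is contained in $\{g>\delta\}$. The last and only nonroutine step is to bound $\H^q_\infty(\{g>\delta\})$, for which I would invoke the Chebyshev (weak-type) inequality for Hausdorff content: there is $C=C(d,q)$ with
$$\H^q_\infty\Bigl(\bigl\{x:\textstyle\sum_ja_j1_{A_j}(x)>\delta\bigr\}\Bigr)\le\frac{C}{\delta}\sum_ja_j\,\H^q_\infty(A_j)$$
for arbitrary countable families $\{A_j\}\subset\R^d$ and reals $a_j\ge0$. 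Applied with $A_j=v(D_j)$, $a_j=(\diam D_j)^\mu$, and with $\H^q_\infty(v(D_j))\le[\diam v(D_j)]^q$, this gives $\H^q_\infty(\{g>\delta\})<C\e/\delta$; letting $\e\to0$ finishes the reduction and with it the theorem.

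\textbf{Main obstacle.} The delicate point is exactly the weak-type inequality for $\H^q_\infty$, which is not countably additive. I would justify it within the Choquet-type circle of ideas already recalled in Section~\ref{prel} (the estimates of D.R.~Adams, Theorems~\ref{AM1} and~\ref{lb7}): compare $\H^q_\infty$ with the dyadic Hausdorff content $\widetilde\H^q_\infty$ (one has $\H^q_\infty\le\widetilde\H^q_\infty\le C(d,q)\,\H^q_\infty$, since a set of diameter $\rho$ is covered by a bounded number of dyadic cubes of comparable size), use the strong subadditivity of $\widetilde\H^q_\infty$ to obtain the countable subadditivity of the Choquet integral $f\mapsto\int_0^\infty\widetilde\H^q_\infty(\{f>t\})\,dt$, and then the displayed inequality follows by an elementary Chebyshev step. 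A second, essentially equivalent route avoids these explicit covers: were the set in the display to have positive $\H^q$-measure for some $\delta>0$, one could first replace $E$ by the Borel set $\bigcap_m\bigcup_jD_j^{(m)}$ assembled from covers with sums $<4^{-m}$ (which still has $\Phi=0$), then take by Frostman's lemma a nonzero Radon measure $\lambda$ carried by that fibre set with $\lambda(B(y,r))\le r^q$, and note that for every admissible cover $\{D_j\}$ of $E$ one has $\delta\,\lambda(\R^d)\le\sum_j(\diam D_j)^\mu\lambda(v(D_j))\le\sum_j(\diam D_j)^\mu[\diam v(D_j)]^q$, so that taking the infimum over covers yields $\delta\,\lambda(\R^d)\le\Phi(E)=0$, a contradiction. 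Continuity of $v$ enters only softly, to make the images $v(D_j)$ compact (hence $\diam v(D_j)<\infty$ and $\Phi$ well behaved); the estimates themselves do not feel it.
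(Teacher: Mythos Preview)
The paper does not itself prove this theorem; it is quoted from \cite{HKK} (Theorem~5.3 there), so there is no in-paper proof to compare against directly. However, the closely related weak-type estimate~(\ref{finit-mdd6}), also quoted from \cite[Theorem~5.4]{HKK}, shows that the argument in \cite{HKK} proceeds via precisely the Chebyshev-type bound for Hausdorff content that you isolate as the crux; so your approach matches that of the cited source.

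Your proof is correct along the first route. Countable subadditivity of $\Phi$ is handled correctly. For the implication, the reduction to the superlevel set of $g(y)=\sum_j(\diam D_j)^\mu\,1_{v(D_j)}(y)$ is the right move, and the weak-type inequality
\[
\H^q_\infty\bigl(\{g>\delta\}\bigr)\le \frac{C}{\delta}\sum_j(\diam D_j)^\mu\,\H^q_\infty\bigl(v(D_j)\bigr)
\]
does follow, as you say, from the comparability of $\H^q_\infty$ with the dyadic Hausdorff content and the sublinearity of the associated Choquet integral (this is indeed the circle of ideas behind Theorems~\ref{AM1}--\ref{lb7} and the references \cite{Ad2,Ad3}). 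Together with $\H^q_\infty(v(D_j))\le[\diam v(D_j)]^q$ this closes the argument.

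One small caveat on your Frostman alternative: replacing $E$ by a Borel superset with $\Phi=0$ is fine, but this alone does not make the bad fibre set $\{y:\H^\mu_\infty(E\cap v^{-1}(y))>\delta\}$ Borel (or analytic), which is what Frostman's lemma requires. The clean fix is to apply Frostman instead to the Borel superlevel set $\{g>\delta\}$ for each fixed cover; then your chain of inequalities $\delta\,\lambda(\R^d)\le\int g\,d\lambda=\sum_j(\diam D_j)^\mu\lambda(v(D_j))\le\sum_j(\diam D_j)^\mu[\diam v(D_j)]^q$ goes through verbatim and yields $\H^q_\infty(\{g>\delta\})\le C\e/\delta$, which is just the Chebyshev bound again. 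So the Frostman route is salvageable but collapses to the first one; as written it has this small measurability gap.
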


\subsection{On local properties of considered potential spaces}\label{loc-s1}
Let $\Be$ be some space of functions defined on~$\R^n$. For a set $\Omega\subset\R^n$ define the space $\Be_\loc(\Omega)$ in the following standard way:
$$\Be_\loc(\Omega):=\{f:\Omega\to\R:\mbox{ for any compact set }E\subset\Omega\mbox{ $\exists g\in\Be$ such that }f(x)=g(x)\ \forall x\in E\,\}.$$
Put for simplicity $\Be_\loc=\Be_\loc(\R^n)$.

It is well known that for $q>p>1$ the inclusions
\begin{equation*}\label{loc00}L_{q,\loc}\subset L_{p,1,\loc}\subset L_{p,\loc},
\end{equation*}
hold (see, e.g., \cite{Maly2}\,). Respectively, it is easy to see that for $\al>0$ one has
\begin{equation*}\label{loc1} \Le^\al_{q,\loc}\subset \Le^\al_{p,1,\loc}\subset \Le^\al_{p,\loc}.
\end{equation*}
Since the Morse--Sard type theorems have a local nature, if we prove some of these theorems for $\Le^\al_{p}$, then the same result will be valid for
the spaces $\Le^\al_{p,1}$ and $\Le^\al_{q}$ for all $q>p$. Similarly, if we prove some Morse--Sard type theorems for $\Le^\al_{p,1}$, then the same result will be valid for
the spaces $\Le^\al_{q}$ with $q>p$, etc.

\subsection{Approximation by Holder--smooth functions}\label{app-s1}

We need also the following approximation result.

\begin{ttt}[see, e.g., Chapter~3 in \cite{Ziem} or
\cite{BHS1}]\label{Th_ap}{\sl Let $p>1$, $k\in\N$,
\,$\alpha\in(0,1)$. Then for any $f\in\Le^{k+\alpha}_p(\R^n)$ and
for each $\varepsilon>0$ there exist an open set $U\subset\R^n$
and a function $h\in \CC^{k,\alpha}(\R^n)$ such that

\begin{itemize}
\item[(i)] \,$\Lb^n(U)<\varepsilon$;

\item[(ii)] \,each point $x\in\R^n\setminus U$ is {an~Lebesgue
point for~$f$ and $\nabla f$;}

\item[(iii)] \,$f\equiv h$ and $\nabla f\equiv\nabla h$ on $\R^n\setminus U$.
\end{itemize}
}
\end{ttt}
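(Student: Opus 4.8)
The plan is the standard Lusin-type scheme for potential spaces: reduce the statement to a pointwise Calder\'on-type Taylor-remainder estimate with an $L_p$ majorant, then excise the set where that majorant is large and apply the Whitney extension theorem. First, write $f=\GG_{k+\alpha}(g)=G_{k+\alpha}*g$ with $g\in L_p(\R^n)$, $p>1$. Since $\Le^{k+\alpha}_p\hookrightarrow\Le^k_{p,\loc}=W^k_{p,\loc}$, the derivatives $\nabla^jf$ for $j\le k$ are well defined as locally $L_p$-functions, so $\Le^n$-a.a.\ points $x$ are simultaneously Lebesgue points of $f,\nabla f,\dots,\nabla^kf$; at each such point set $T^k_xf(y)=\sum_{|\gamma|\le k}\frac1{\gamma!}\nabla^\gamma f(x)(y-x)^\gamma$. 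It suffices to construct a nonnegative $g^\sharp\in L_p(\R^n)$ with
$$\bigl|\nabla^jf(y)-\nabla^j_y[T^k_xf](y)\bigr|\le|x-y|^{k+\alpha-j}\bigl(g^\sharp(x)+g^\sharp(y)\bigr),\qquad j=0,1,\dots,k,$$
for a.a.\ $x,y\in\R^n$; indeed, then on the set $E_\lambda:=\{x:\ g^\sharp(x)\le\lambda\text{ and }x\text{ is a Lebesgue point of }f,\nabla f,\dots,\nabla^kf\}$ the $k$-jet $(\nabla^\gamma f)_{|\gamma|\le k}$ is, by the displayed inequalities, a Whitney field of class $C^{k,\alpha}$ with Whitney constants $\le C\lambda$.

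The heart of the argument is the construction of $g^\sharp$. Fix a dyadic cube $Q\ni x$ of side $r$. By Theorem~\ref{SCE-1} (valid when $1<k+\alpha<n+2$; in the remaining range $k+\alpha\ge n+2$ one has $(k+\alpha)p>n$ and the elementary Morrey/Sobolev estimates suffice) there is a polynomial $P_Q$ of degree $k$ with $|\nabla(f-P_Q)(x)|\le C\int_Q\M g(z)\,|x-z|^{-(n-k-\alpha+1)}\,dz$ on $Q$, and, refining the same computation with the kernel bounds $|\nabla^jG_{k+\alpha}(z)|\le C|z|^{k+\alpha-n-j}$ of Lemma~\ref{lem-bc-2}, one gets analogous bounds for every $\nabla^j(f-P_Q)$, $j\le k$, by Riesz potentials of $\M g\cdot 1_Q$. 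Comparing the polynomials $P_Q$ along a nested dyadic sequence shrinking to a Lebesgue point gives, in the usual telescoping way, $\nabla^\gamma f(x)=\lim\nabla^\gamma P_Q(x)$ together with
$$\sup_{|\gamma|\le k}\ \sup_{0<r\le\diam Q_0}r^{|\gamma|-(k+\alpha)}\bigl|\nabla^\gamma(f-P_{Q(x,r)})(x)\bigr|\le C\Bigl(\M g(x)+I_{k+\alpha}\bigl(\M g\cdot 1_{2Q_0}\bigr)(x)\Bigr)=:g^\sharp_{Q_0}(x);$$
a Poincar\'e-type passage from $P_{Q(x,r)}$ to the Taylor polynomial at a second Lebesgue point $y$ then yields the displayed Taylor-remainder inequalities. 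Since $g\in L_p$ with $p>1$ we have $\M g\in L_p$ and $I_{k+\alpha}(\M g\cdot1_{2Q_0})\in L_{p,\loc}$; globalizing by summing over a locally finite cover $\{Q_0\}$ (with a subordinate partition of unity) produces a single $g^\sharp\in L_p(\R^n)$ with the required property. This is precisely the Calder\'on--DeVore--Sharpley characterization of $\Le^{k+\alpha}_p$; see \cite{Ziem,BHS}.

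Finally, choose $\lambda$ so large that $\Le^n(\R^n\setminus E_\lambda)\le\Le^n(\{g^\sharp>\lambda\})<\varepsilon$ (Chebyshev, using $g^\sharp\in L_p$, $p<\infty$), pick by outer regularity an open $U\supset\R^n\setminus E_\lambda$ with $\Le^n(U)<\varepsilon$, and put $F=\R^n\setminus U\subset E_\lambda$, a closed set. By the classical Whitney extension theorem for the class $C^{k,\alpha}$, the Whitney field $(\nabla^\gamma f|_F)_{|\gamma|\le k}$ extends to a function $h\in\CC^{k,\alpha}(\R^n)$ with $\nabla^\gamma h=\nabla^\gamma f$ on $F$ for all $|\gamma|\le k$; in particular $h\equiv f$ and $\nabla h\equiv\nabla f$ on $\R^n\setminus U$, and every point of $\R^n\setminus U\subset E_\lambda$ is a Lebesgue point of $f$ and of $\nabla f$, which gives (i)--(iii). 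The main obstacle is the middle step: extracting from Theorem~\ref{SCE-1} and Lemma~\ref{lem-bc-2} the full family of pointwise Taylor-remainder estimates of all orders $j\le k$ at two points, and packaging the resulting bounds into a genuine \emph{global} $L_p$ majorant $g^\sharp$; the case split $(k+\alpha)p\gtrless n$ and the summation over scales and over the cover $\{Q_0\}$ are routine but must be carried out with care.
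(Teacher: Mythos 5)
Your overall strategy is sound, and it is essentially the argument that stands behind the paper's treatment: the paper does not prove Theorem~\ref{Th_ap} at all, but refers to the Sobolev-case Lusin-type approximation (Chapter~3 of \cite{Ziem}, \cite{BHS}, cf.\ also \cite{BHS1} and \cite{Sw}) and remarks that the potential-space case follows from that together with the Whitney extension theorem for H\"older classes (Theorem~4 in \cite[\S2.3, Chapter~6]{St}). Your proposal makes this explicit by the Calder\'on--DeVore--Sharpley route: pointwise Taylor-remainder inequalities with an $L_p$ majorant, Chebyshev plus outer regularity to remove an open set of small measure, and the $C^{k,\alpha}$ Whitney extension applied to the $k$-jet on the remaining closed set. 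That skeleton is correct and matches the cited sources.

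Two steps are, however, not correct as written. First, the parenthetical disposal of the range $k+\al\ge n+2$ via ``Morrey/Sobolev estimates'' cannot work: any embedding of $\Le^{k+\al}_p$ loses $n/p$ in the smoothness index, so it never yields H\"older regularity with the \emph{same} exponent $\al$. The correct device is the order reduction the paper itself uses in Appendix~\ref{appendixII}: put $l=\min\{i\in\Z_+:\,k+\al-n-2<i\}$, apply Theorem~\ref{SCE-1} and Lemma~\ref{lem-bc-2} to $u=\nabla^l f\in\Le^{k-l+\al}_p$, and integrate the approximating polynomial back up. Second, the globalization of the majorant is a non sequitur: summing over a locally finite cover functions that are merely in $L_{p,\loc}$ produces only an $L_{p,\loc}$ function, and with $g^\sharp\in L_{p,\loc}$ the set $\{g^\sharp>\lambda\}$ may well have infinite measure, so conclusion (i) would fail. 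The repair is to observe that for $x\in Q$, $\ell(Q)=r$, and every $j\le k$ one has $\int_{Q}\M g(y)\,|x-y|^{k+\al-j-n}\,\dd y\le C\,r^{k+\al-j}\,\M\bigl(\M g\bigr)(x)$ (dyadic decomposition in $|x-y|$, convergent because $k+\al-j\ge\al>0$), so the majorant may simply be taken as $C\,\M\M g$, which lies in $L_p(\R^n)$ globally by the Hardy--Littlewood maximal theorem since $p>1$; alternatively one can invoke the known pointwise characterization for fractional spaces directly (\cite{Sw}). Finally, before applying Whitney you must upgrade the two-point inequality from ``a.e.\ $x,y$'' to \emph{all} pairs of points of the closed set $F$ (e.g.\ by averaging over small balls around Lebesgue points, using that $\M\M g$ is defined everywhere); Whitney's theorem requires the jet conditions at every point of $F$, and this standard but necessary step is missing from your sketch.
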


Note, that in the cited references the approximation property is
discussed for the case of Sobolev spaces $W^k_p$, but the proof
for the $\Le^{k+\alpha}_p(\R^n)$ space easily follows from the just mentioned
Sobolev case and some standard arguments on real analysis
concerning approximation limits and Whitney-type extension
theorems for Holder classes (see, e.g., Theorem~4 in \cite[\S2.3,
Chapter~6]{St}\,).

\section{Proofs of the main results}

\subsection{Bridge Federer--Dubovitski\u{\i} theorem for Sobolev mappings}\label{MSP}

Recall, that \textit{bridge
Dubovitski\u{\i}--Federer Theorem} Theorem~\ref{DFT-F} for the case~(ii) was proved in our previous paper~\cite{HKK}. The purpose here is to prove Theorem~\ref{DFT-F}~(i).
But of course, the case~(i) with $k\le n$ follows immediately from the case~(ii) (see the section~\ref{loc-s1}\,). So we need to consider here only the situation Theorem~\ref{DFT-F}~(i)
with $$k>n\quad\mbox{ and }\quad p=1.$$

Fix integers $m\in\{0,\dots,n-1\}$, $d> m$, $k>n$, and a mapping
$v\in \WW^{k}_{1}(\R^n,\R^d)$. Then, by Lemma~\ref{lb3} the
function~$v$ is $C^1$-smooth.

Denote $Z_{v,m}=\{x\in\R^n:\rank\nabla v(x)\le m\}$. Fix a~number
$q>m.$ Denote in this subsection
\begin{equation*}\label{muu3}
\mu=\mu_q=n-m-k(q-m).
\end{equation*}

Recall, that we need  to consider only  the case 
\begin{equation*}\label{qqq}
q\in(m,\b],
\end{equation*}
where $\b =m+\frac{n-m}k$ (see Remark~\ref{remq0}\,). Then by direct calculation we have
$\mu\ge0$.

The required assertion of the \textit{bridge
Dubovitski\u{\i}--Federer Theorem}~\ref{DFT-F}-(i) is equivalent (by
virtue of Theorem~\ref{FubN}\,) to the identity
\begin{equation*}\label{cor-evv1}
\Phi(Z_{v,m})=0,
\end{equation*}
where by definition \begin{equation}\label{h-dd5-dd}
\Phi(E):=\inf\limits_{E\subset\bigcup_j
D_j}\sum\limits_j\bigl(\diam D_j\bigr)^\mu\bigl[\diam
v(D_j)\bigr]^q.
\end{equation}
As indicated the infimum is taken over all countable families of
compact sets $\{D_j\}_{j\in \N}$ such that $E\subset\bigcup_j
D_j$.

Before embarking on the detailed proof we make some preliminary
observations that allow us to make a few simplifying assumptions.
We could assume without loss of generality that
\begin{equation*}\label{ass8}
|\nabla v(x)|\le1\qquad\forall x\in\R^n.
\end{equation*}
\noindent Denote $Z_v=Z_{v,m}$.  Then from Theorem~\ref{Sl-est-4}~(i), applied to our values of $q,\mu=\mu_q$ we obtain immediately

\begin{lem}\label{lb11}{\sl
Let $q\in(m,\b]$. Then for any sufficiently small
$n$-dimensional interval $Q\subset\R^n$ the estimate
\begin{equation}
\label{6} \Phi(Z_v\cap Q)\leq C\,\ell(Q)^{n(m-q+1)}\cdot\|\nabla^{k}
v\|^{q-m}_{\LL_1(Q)}
\end{equation}
holds, where the constant $C$ depends on $n,m,k,d$ only.}
\end{lem}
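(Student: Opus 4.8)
\textbf{Proof plan for Lemma~\ref{lb11}.} The statement asserts a bound on the set function $\Phi$ restricted to the critical set inside a single small cube $Q$, and the problem reduces, via the hint in the excerpt, to ``Theorem~\ref{Sl-est-4}~(i)'' — the cube-by-cube estimate that has been announced as being moved to Appendix~\ref{appendixII}. So the plan here is essentially: \emph{quote the appendix estimate and match parameters}. More precisely, the appendix result should assert that for a $C^1$-mapping $v$ with $|\nabla v|\le 1$ on $\R^n$ and a small cube $Q$, one has
\begin{equation*}
\Phi(Z_v\cap Q)\le C\,\ell(Q)^{\mu}\,\bigl(\ell(Q)^{k-n}\,\|\nabla^k v\|_{\LL_1(Q)}\bigr)^{q-m+1},
\end{equation*}
coming from covering $Z_v\cap Q$ by the level sets of the near-critical values of the approximating polynomial $P=P_{Q,k-1}[v]$ and applying Yomdin's entropy estimate (Theorem~\ref{lb8}) together with the polynomial approximation bounds of Lemma~\ref{lb3}. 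Then I would just expand the exponent: $\mu+(k-n)(q-m+1)=n-m+1-k(q-m+1)+(k-n)(q-m+1)=n-m+1-n(q-m+1)=n(m-q)$, which is exactly the exponent $n(m-q)$ of $\ell(Q)$ appearing in \eqref{6}. So \eqref{6} is literally a rewriting of the appendix estimate.

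In a bit more detail, the mechanism behind the appendix estimate (which I would reconstruct if it were not already available) is the following. First normalize and replace $v$ on $Q$ by $v_Q=v-P$ with $P=P_{Q,k-1}[v]$; by Lemma~\ref{lb3}, $\|v_Q\|_{L_\infty(Q)}\lesssim \ell(Q)^{k-n}\|\nabla^k v\|_{\LL_1(Q)}=:M$ and $v_Q$ extends to $\WW^k_1(\R^n,\R^d)$ with controlled norm by \eqref{1'}. Next partition $Q$ into $\sim (\ell(Q)/r)^n$ subcubes $Q_i$ of a scale $r$ to be chosen, and on each $Q_i$ compare $v$ with the Taylor polynomial of $v$ at the center; the near-critical points of $v$ in $Q_i$ are contained in the near-critical points of that polynomial, to which Theorem~\ref{lb8} applies. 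Summing the entropy bounds $C_Y(1+\varepsilon^{1-m})$ over all subcubes, weighting a ball of radius $\varepsilon r$ by $(\varepsilon r)^\mu(\text{something})^q$ where the ``something'' is the oscillation of $v$ on the ball, and choosing $\varepsilon$ (or the scale $r$) to balance, yields the claimed power of $\ell(Q)$ times $M^{q-m+1}$. The hypothesis $q\le\b$, equivalently $\mu\ge0$, is used precisely so that the geometric-series summation over scales converges in the right direction.

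\textbf{Main obstacle.} Since the heavy lifting is explicitly deferred to the appendix, the only genuine content of \emph{this} lemma is (a) checking that Theorem~\ref{Sl-est-4}~(i) is applicable under the running hypotheses $k>n$, $p=1$, $v\in\WW^k_1(\R^n,\R^d)$ with $|\nabla v|\le 1$, and (b) performing the exponent arithmetic $\mu+(k-n)(q-m+1)=n(m-q)$ to see that the two displayed formulas agree. Thus there is no real obstacle at the level of Lemma~\ref{lb11} itself; the substantive difficulty — controlling the Choquet-type sum $\Phi$ on a cube through Yomdin's algebraic-geometry entropy bound, with the correct dependence on $\ell(Q)$ and $\|\nabla^k v\|_{\LL_1(Q)}$ and the borderline integrability $p=1$ handled via Theorem~\ref{cs-lb7} and Lemma~\ref{l-lip} — lives in the appendix estimate Theorem~\ref{Sl-est-4}, which this lemma merely invokes.
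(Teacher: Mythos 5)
Your overall strategy is indeed the paper's: Lemma~\ref{lb11} is obtained there by quoting the appendix cube estimate (Theorem~\ref{Sl-est-4}~(i)) and checking an exponent identity, so the plan is right. But the two details you actually claim to verify are both off, and the error is real. Theorem~\ref{Sl-est-4}~(i) states $\Phi(Z'_v\cap Q)\le C\,\sigma^{q-m+1}\,\ell(Q)^{\,q+\mu+(k-1-n)(q-m+1)}$ with $\sigma=\|\nabla^k v\|_{\LL_1(Q)}$: the small parameter fed into Yomdin's theorem is the \emph{gradient} bound $\e\sim \ell(Q)^{k-1-n}\sigma$ (each of the $\sim\e^{1-m}$ balls of radius $\e\,\ell(Q)$ is weighted by $\ell(Q)^{\mu}\bigl(\e\,\ell(Q)\bigr)^{q}$, giving the prefactor $\ell(Q)^{q+\mu}$), not the sup-norm bound of Lemma~\ref{lb3}. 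Your guessed form $\ell(Q)^{\mu}\bigl(\ell(Q)^{k-n}\sigma\bigr)^{q-m+1}$ is weaker than the true one by exactly the factor $\ell(Q)^{m-1}$. Correspondingly your arithmetic is wrong: $\mu+(k-n)(q-m+1)=n-m+1-n(q-m+1)=n(m-q)-(m-1)$, which equals $n(m-q)$ only when $m=1$; the identity that actually holds and is needed is $q+\mu+(k-1-n)(q-m+1)=n(m-q)$.

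Consequently, as written your argument proves only $\Phi(Z_v\cap Q)\le C\,\ell(Q)^{\,n(m-q)-(m-1)}\|\nabla^k v\|^{q-m+1}_{\LL_1(Q)}$, which for $m\ge2$ and small $Q$ is strictly weaker than \eqref{6}; moreover that weaker exponent would break the subsequent H\"older step in Corollary~\ref{cor00}, where one needs precisely $\ell(Q_j)^{n(m-q)}=\bigl(\ell(Q_j)^n\bigr)^{m-q}$. The repair is straightforward: quote the appendix estimate in its actual form \eqref{t-cs-2} and redo the exponent computation, after which the deduction coincides with the paper's. A secondary point: the mechanism you sketch for the appendix bound (subdividing $Q$ into subcubes and balancing a scale) is not how it is proved — Yomdin's entropy bound is applied once on the whole cube to the approximating polynomial, with $\e=C\,\ell(Q)^{k-1-n}\|\nabla^k v\|_{\LL_1(Q)}$ — but since your lemma merely invokes the appendix, that inaccuracy is inessential here; the exponent mismatch is the genuine gap.
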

(Indeed, we have by direct elementary calculation, that now the exponent $q+\mu+(k-1-n)(q-m)$ from the formula~(\ref{t-cs-2}) coincides with $n(m-q+1)$ from~(\ref{6}).)

Note, that by our assumptions $k>n$, therefore
\begin{equation*}
\label{qqe} q\le\b=m+\frac{n-m}k<m+1.
\end{equation*}

\begin{cor}
\label{cor00} {\sl  Let $q\in(m,\b]$. Then for any
$\varepsilon>0$ there exists $\delta>0$ such that for any subset
$E$ of $\R^n$ we have $\Phi(Z_v\cap E)\le\varepsilon$ provided
$\Lb^n(E)\le\delta$. In particular, $\Phi(Z_{v}\cap E)=0$ whenever
$\Lb^n(E)=0$.}
\end{cor}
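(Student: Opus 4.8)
The plan is to derive Corollary~\ref{cor00} directly from the estimate in Lemma~\ref{lb11} together with the absolute continuity of the integral $\int|\nabla^k v|$. First I would fix $\varepsilon>0$ and invoke absolute continuity of the Lebesgue integral: since $v\in\WW^k_1(\R^n,\R^d)$, the function $|\nabla^k v|$ lies in $L_1(\R^n)$, so there is $\eta>0$ such that $\int_A|\nabla^k v|\,\dd x<\eta$ whenever $\Le^n(A)<\eta'$ for a suitable $\eta'$; more precisely, given any $\sigma>0$ there is $\delta_0>0$ with $\|\nabla^k v\|_{\LL_1(A)}<\sigma$ whenever $\Le^n(A)\le\delta_0$.

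Next I would cover a set $E$ with $\Le^n(E)\le\delta$ by a grid of small $n$-dimensional cubes. The point is that for $q\in(m-1,\b]$ we have $\mu_q\ge0$ and, as noted after the lemma, $q<m$, so the exponent $n(m-q)$ in \eqref{6} is strictly positive; hence a dyadic grid of cubes $Q_i$ of sidelength $\ell$ small enough that each $Q_i$ is ``sufficiently small'' for Lemma~\ref{lb11} gives, using $q-m+1\in(0,1]$ so that the map $t\mapsto t^{q-m+1}$ is concave and subadditive,
\begin{equation*}
\Phi(Z_v\cap E)\le\sum_i\Phi(Z_v\cap Q_i)\le C\sum_i\ell^{n(m-q)}\|\nabla^k v\|^{q-m+1}_{\LL_1(Q_i)}.
\end{equation*}
Here the countable subadditivity of $\Phi$ (guaranteed by Theorem~\ref{FubN}) is what lets me split the sum over the grid. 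To control the right-hand side I would restrict to cubes meeting $E$ and use Hölder's inequality in the form $\sum_i a_i^{q-m+1}\le(\#\{i\})^{m-q}\bigl(\sum_i a_i\bigr)^{q-m+1}$, combined with the bound $\#\{i:Q_i\cap E\ne\emptyset\}\lesssim \Le^n(E_\ell)/\ell^n$ where $E_\ell$ is the $\ell$-neighbourhood of $E$; since $\sum_i\|\nabla^k v\|_{\LL_1(Q_i)}=\|\nabla^k v\|_{\LL_1(E_\ell)}$ by disjointness of the cube interiors, the $\ell^{n(m-q)}$ factor exactly cancels the combinatorial factor $(\Le^n(E_\ell)/\ell^n)^{m-q}$, leaving
\begin{equation*}
\Phi(Z_v\cap E)\le C\,\Le^n(E_\ell)^{m-q}\,\|\nabla^k v\|^{q-m+1}_{\LL_1(E_\ell)}.
\end{equation*}
Letting $\ell\to0$ so that $\Le^n(E_\ell)\to\Le^n(E)\le\delta$ and $\|\nabla^k v\|_{\LL_1(E_\ell)}\to\|\nabla^k v\|_{\LL_1(E)}$ (outer regularity of Lebesgue measure and absolute continuity again), I obtain $\Phi(Z_v\cap E)\le C\,\delta^{m-q}\|\nabla^k v\|^{q-m+1}_{\LL_1(E)}\le C\,\delta^{m-q}\|\nabla^k v\|^{q-m+1}_{\LL_1(\R^n)}$, which is $\le\varepsilon$ once $\delta$ is chosen small, since $m-q>0$. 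The ``in particular'' statement for $\Le^n(E)=0$ then follows by applying the first part with $\varepsilon$ arbitrary, or directly by noting that then $E_\ell$ can be taken of arbitrarily small measure.

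The main obstacle I anticipate is purely bookkeeping: making the covering argument clean when $E$ is an arbitrary (possibly non-measurable or merely Lebesgue-outer-measurable) subset, and tracking that the exponents genuinely cancel — i.e.\ verifying $n(m-q)+n(q-m)=0$ and that $q-m+1\le1$ so subadditivity of $t^{q-m+1}$ applies. There is also a minor subtlety in passing to the limit $\ell\to0$: one should fix $\ell$ small enough that Lemma~\ref{lb11} is applicable on each grid cube and then simply use the $\ell$-independent bound $\Le^n(E_\ell)\le\Le^n(E)+o(1)$ together with $\|\nabla^k v\|_{\LL_1(E_\ell)}\le\|\nabla^k v\|_{\LL_1(\R^n)}$, so that no genuine limiting argument is even needed for the first assertion — a single small $\ell$ suffices once $\delta$ is fixed. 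None of this is deep; the real content has already been packaged into Lemma~\ref{lb11}.
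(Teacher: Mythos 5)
Your overall strategy (apply Lemma~\ref{lb11} cube by cube, use countable subadditivity of $\Phi$, then H\"older's inequality with the conjugate exponents $\frac1{m-q}$ and $\frac1{q-m+1}$, exploiting $m-q\in(0,1)$ so that $\delta^{m-q}\to0$) is exactly the paper's, and that part of your computation is correct. However, the covering step contains a genuine error, not mere bookkeeping: you control the total volume of the grid cubes meeting $E$ by $\Le^n(E_\ell)$, where $E_\ell$ is the $\ell$-neighbourhood of $E$, and you claim $\Le^n(E_\ell)\to\Le^n(E)$ (equivalently $\Le^n(E_\ell)\le\Le^n(E)+o(1)$) ``by outer regularity''. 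This is false for general sets: as $\ell\to0$ the measures $\Le^n(E_\ell)$ decrease to $\Le^n(\Cl E)$, the measure of the \emph{closure}, which can be vastly larger than the outer measure of $E$ (take $E=\Q^n\cap[0,1]^n$: then $\Le^n(E)=0$ but $\Le^n(E_\ell)\ge1$ for every $\ell>0$). Outer regularity approximates $E$ from outside by \emph{open sets containing} $E$, not by metric neighbourhoods. The gap is not harmless for the intended application either: in the paper the corollary is applied to the exceptional null set $\widetilde Z_{v}$ of Corollary~\ref{Th_ap2}, which is an arbitrary Lebesgue-null subset of $Z_v$ and may well be dense in a region, so your bound degenerates precisely in the case that matters.

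The repair is the paper's route: given $\Le^n(E)\le\delta$, use outer regularity to pick an open set $U\supset E$ with $\Le^n(U)\le C\delta$, and decompose $U$ into countably many nonoverlapping dyadic cubes $Q_j$ (subdividing so that each $\ell(Q_j)\le\delta^{1/n}$ is small enough for Lemma~\ref{lb11}); then $\sum_j\ell(Q_j)^n\le C\delta$ and $\sum_j\|\nabla^kv\|_{\LL_1(Q_j)}\le\|\nabla^kv\|_{\LL_1}$ by nonoverlap, after which your H\"older computation gives $\Phi(Z_v\cap E)\le C'\,\|\nabla^kv\|_{\LL_1}^{q-m+1}\,\delta^{m-q}$, exactly as in the paper. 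Note also that with this covering the absolute continuity of $\int|\nabla^kv|$, which you invoke at the start, is not needed for the stated conclusion: the factor $\delta^{m-q}$ alone forces smallness, since $m-q\ge m-\b>0$.
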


\begin{proof}
 Let $\Lb^n(E)<\delta$. Then we can find a family of
nonoverlapping $n$-dimensional dyadic intervals~$Q_j$ such that
$E\subset\bigcup_j Q_j$ and $\sum\limits_j\ell^n(Q_j)<C\delta$. Of
course, for sufficiently small $\delta$ the estimates
\begin{equation*}\label{eles1}\|\nabla^{k}
v\|_{\LL_1(Q_j)}<1,\qquad \ell(Q_j)\le \delta^\frac1n
\end{equation*} are fulfilled for every~$j$. Denote
\begin{equation}\label{ds5}r_j=\ell(Q_j),\qquad\sigma_j=\|\nabla^{k}
v\|_{\LL_1(Q_j)},\qquad\sigma=\|\nabla^{k}
v\|_{\LL_1}.\end{equation} In view of Lemma~\ref{lb11} we have
\begin{equation*}\label{oxMS1}\Phi(E)\le C\sum_j
r_j^{n(m-q+1)}\,\sigma_j^{q-m}.
\end{equation*}
Since by our assumptions
$$
0<m-\b+1\le m-q+1<1,\qquad
$$
we have
\begin{eqnarray*}\label{oxMS3}
\sum_j r_j^{n(m-q+1)}\,\sigma_j^{q-m}
&\overset{\mbox{\footnotesize\color{red}H\"{o}lder ineq.}}\leq &
C \biggl(\sum\limits_j r_{j}^{n}\biggr)^{m-q+1}\cdot \biggl(\sum\limits_j\sigma_j\biggr)^{q-m} \nonumber\\
&\leq& C' \delta^{m-q+1}\cdot\sigma^{q-m} .
\end{eqnarray*}
The lemma is proved.
\end{proof}
By the classical approximation results (see, e.g., Chapter~3 in \cite{Ziem} or
\cite{BHS1}\,), our mapping~$v$ coincides with a
mapping $g\in \CC^{k}(\R^n,\R^d)$ off an exceptional set of small
$n$--dimensional Lebesgue measure. So we need to check the~assertion of Theorem~\ref{DFT-F}-(i)  for
$\CC^k$--smooth mappings now.

\begin{lem}
\label{Dub-smooth} {\sl  Let $q\in(m,\b]$ \ and \ $g\in
\CC^{k}(\R^n,\R^d)$, $k>n$. Then
\begin{equation}
\label{ff-ds} \Phi_g(Z_{g,m})=0,
\end{equation} where $\Phi_g$ is calculated by the same
formula~(\ref{h-dd5-dd}) with $g$ instead of~$v$ and
$Z_{g,m}=\{x\in\R^n:\rank\nabla g(x)\le m\}$. }
\end{lem}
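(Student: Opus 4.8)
\textbf{Proof plan for Lemma~\ref{Dub-smooth}.} The plan is to reduce the statement to a single-cube estimate and then run a Yomdin-type covering/iteration argument exactly as in~\cite{HKK}. First I would recall that since $g\in\CC^k$ with $k>n$, Lemma~\ref{lb3} (with $v_Q=g-P_{Q,k-1}[g]$, extended to $\R^n$ with control~(\ref{1'}) on $\|\nabla^k v_Q\|_{\LL_1}$) gives, for any $n$-dimensional cube $Q$, the uniform bound $\|\nabla(g-P)\|_{L_\infty(Q)}\le C\ell(Q)^{k-1-n}\|\nabla^k g\|_{\LL_1(Q)}$ with $P=P_{Q,k-1}[g]$ a polynomial of degree $\le k-1$. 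Thus on $Q$ the map $g$ is, up to an error whose gradient is as small as we like when $\ell(Q)$ is small, a polynomial of degree $\le k-1$; on the critical set the singular values of $\nabla P$ are correspondingly close to those of $\nabla g$, so the image $g(Z_{g,m}\cap Q)$ is contained in a small neighborhood of $P(\{x\in Q:\lambda_1(P,x)\le1+\e,\dots,\lambda_{m-1}\le1+\e,\lambda_m\le\e,\dots,\lambda_d\le\e\})$ for a suitable $\e=\e(\ell(Q))\to0$.

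Second, I would quote Yomdin's entropy estimate (Theorem~\ref{lb8}): that near-critical image can be covered by $\le C_Y(1+\e^{1-m})$ balls of radius $\e\,\ell(Q)$. Feeding this covering into the definition~(\ref{h-dd5-dd}) of $\Phi_g$ — using each such ball (intersected with $Q$ and with the preimage) as a $D_j$, with $\diam D_j\lesssim\ell(Q)$ and $\diam g(D_j)\lesssim\e\ell(Q)$ — yields a bound of the shape
\begin{equation*}
\Phi_g(Z_{g,m}\cap Q)\le C(1+\e^{1-m})\,\ell(Q)^{\mu}\,(\e\ell(Q))^{q}.
\end{equation*}
Optimizing in $\e$ (or simply choosing $\e$ comparable to a fixed small constant, since $q>m-1$ makes the exponent $q-m+1>0$ absorb the entropy factor) and using $\mu+q=n-(k-1)(q-m+1)$... more carefully, one recovers precisely the single-cube bound already recorded as Lemma~\ref{lb11}: $\Phi_g(Z_{g,m}\cap Q)\le C\,\ell(Q)^{n(m-q)}\|\nabla^k g\|^{q-m+1}_{\LL_1(Q)}$, and I would in fact just invoke Lemma~\ref{lb11} directly (it is stated for the $\WW^k_1$ mapping $v$, but its proof via Theorem~\ref{Sl-est-4} applies verbatim to the $\CC^k$ mapping $g$). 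So the whole of the first part collapses to: \emph{the single-cube estimate of Lemma~\ref{lb11} holds for $g$.}

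Third comes the iteration. Since $q\le\b$ implies $m-q\in[m-\b,1)$ with $m-\b>0$, the exponent $n(m-q)$ is strictly between $0$ and $n$. Partition a fixed large cube $Q_0\supset$ (the relevant part of) $Z_{g,m}$ into $2^{jn}$ dyadic subcubes $Q_{j,i}$ of sidelength $2^{-j}\ell(Q_0)$; by countable subadditivity of $\Phi_g$ (Theorem~\ref{FubN}) and Lemma~\ref{lb11},
\begin{equation*}
\Phi_g(Z_{g,m}\cap Q_0)\le C\sum_i\ell(Q_{j,i})^{n(m-q)}\|\nabla^k g\|^{q-m+1}_{\LL_1(Q_{j,i})}\le C\,\bigl(2^{-jn}\bigr)^{m-q}\!\!\sum_i\bigl(2^{jn}\ell(Q_{j,i})^n\bigr)^{m-q}\!\!\|\nabla^k g\|^{q-m+1}_{\LL_1(Q_{j,i})},
\end{equation*}
and by Hölder with exponents $\frac1{m-q}$ and $\frac1{q-m+1}$ (both $>1$),
\begin{equation*}
\Phi_g(Z_{g,m}\cap Q_0)\le C\Bigl(\sum_i\ell(Q_{j,i})^n\Bigr)^{m-q}\Bigl(\sum_i\|\nabla^k g\|_{\LL_1(Q_{j,i})}\Bigr)^{q-m+1}\le C\,\Le^n(Q_0)^{m-q}\,\|\nabla^k g\|^{q-m+1}_{\LL_1(Q_0)}.
\end{equation*}
This is bounded uniformly in $j$, which alone does not give $0$; the fix is the standard one — apply the same estimate on each subcube separately and exploit that $\|\nabla^k g\|_{\LL_1}$ is absolutely continuous as a set function, so refining the partition forces $\sum_i\|\nabla^k g\|^{q-m+1}_{\LL_1(Q_{j,i})}\to0$ when $q-m+1<1$ (which holds here since $q\le\b<m$), exactly as in Corollary~\ref{cor00}'s proof. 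Equivalently, combine Corollary~\ref{cor00} (negligibility of $\Le^n$-null sets) with the fact that $Z_{g,m}$ for a $\CC^k$ map, $k>n$, can be handled by removing a set of small measure where $\|\nabla^k g\|_{\LL_1}$ concentrates. Concluding $\Phi_g(Z_{g,m})=0$.

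\textbf{Main obstacle.} The delicate point is the passage from the bounded-but-not-vanishing single partition estimate to the conclusion $\Phi_g=0$: one must arrange the covering so that the "mass" factor $\sum_i\|\nabla^k g\|^{q-m+1}_{\LL_1(Q_{j,i})}$ genuinely tends to $0$, which uses $q-m+1<1$ (superadditivity of $t\mapsto t^{q-m+1}$ fails, so refining helps) together with the absolute continuity of the measure $E\mapsto\|\nabla^k g\|_{\LL_1(E)}$ — this is precisely where the restriction $q\le\b$, equivalently $\mu_q\ge0$, is used, and it is the only genuinely non-formal ingredient beyond Lemma~\ref{lb11} and Yomdin's theorem.
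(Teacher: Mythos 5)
Your first part (the single-cube bound via Lemma~\ref{lb3} and Yomdin's Theorem~\ref{lb8}, i.e.\ Lemma~\ref{lb11} applied to $g$) is fine, but the second part contains a genuine gap, and it is exactly at the point you flag as the ``main obstacle''. With the full norm $\sigma_i=\|\nabla^k g\|_{\LL_1(Q_{j,i})}$ the refinement mechanism you propose does not work: since $0<q-m+1<1$, the map $t\mapsto t^{q-m+1}$ is concave and subadditive, so splitting the mass over finer cubes makes $\sum_i\sigma_i^{q-m+1}$ \emph{larger}, not smaller (for evenly spread mass it grows like $N^{1-(q-m+1)}$); your parenthetical ``superadditivity fails, so refining helps'' is backwards. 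After H\"older the bound is $C\,\Le^n(Q_0)^{m-q}\,\|\nabla^k g\|^{q-m+1}_{\LL_1(Q_0)}$, a fixed positive constant independent of the mesh, and no amount of re-partitioning improves it. Nor can one ``remove a set of small measure where $\|\nabla^k g\|_{\LL_1}$ concentrates'' and finish with Corollary~\ref{cor00}: that corollary only treats sets of small $\Le^n$-measure, while $Z_{g,m}$ may have full measure (e.g.\ $g$ constant), and on the large remaining part the bound with the full $\LL_1$-norm stays bounded away from zero unless $\nabla^k g\equiv0$.

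The missing idea — and the paper's actual proof — is to exploit the $\CC^k$-regularity \emph{inside the single-cube estimate} rather than only through Lemma~\ref{lb11}. One takes the approximating polynomial of degree $k$ (not $k-1$), so that the error term in the cube estimate becomes the mean oscillation
$\|\nabla^k\bg_Q\|_{\LL_1(Q)}$ with
$\nabla^k\bg_Q(x)=\nabla^k g(x)-\dashint_Q\nabla^k g(y)\,\dd y$,
yielding the assertion $(*)$:
\begin{equation*}
\Phi_g(Z_{g,m}\cap Q)\le C\,\ell(Q)^{n(m-q)}\,\|\nabla^k \bg_Q\|^{q-m+1}_{\LL_1(Q)}.
\end{equation*}
Because $\nabla^k g$ is continuous and compactly supported, hence uniformly continuous, for any family of nonoverlapping cubes one has $\sum_j\|\nabla^k\bg_{Q_j}\|_{\LL_1(Q_j)}\to0$ as $\sup_j\ell(Q_j)\to0$ (each term is at most a modulus of continuity times $\ell(Q_j)^n$), and then the Corollary~\ref{cor00} computation run with $\sigma_j=\|\nabla^k\bg_{Q_j}\|_{\LL_1(Q_j)}$ and $\sigma=\sum_j\sigma_j\to0$ gives $\Phi_g(Z_{g,m})<\e$ for every $\e>0$. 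So the conclusion $\Phi_g(Z_{g,m})=0$ really comes from the vanishing of the oscillation-based mass under refinement — an ingredient available only because $g\in\CC^k$ — and not from any refinement of the estimate involving $\|\nabla^k g\|_{\LL_1(Q)}$ itself; without replacing the norm by the oscillation your argument cannot close.
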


\begin{proof}
We can assume without loss of generality that $g$ has compact
support and that $| \nabla g(x)| \leq 1$ for all $x \in \R^n$. We
then clearly have that $g\in \WW^{k}_{1}(\R^n,\R^d)$, hence we can
in particular apply the above results to $g$. The following
assertion plays the key role:

\noindent ($*$) \ {\sl For any $n$-dimensional  interval
$Q\subset\R^n$ the estimate
\begin{equation*}
\label{ff-ds-kk} \Phi(Z_{g,m}\cap Q)\le
C\,\ell(Q)^{n(m-q+1)}\,\|\nabla^{k} \bg_Q\|^{q-m}_{\LL_1(Q)}
\end{equation*}
holds, where the constant $C$ depends on $n,m,k,d$ only, and we
denoted
$$
\nabla^{k}\bg_Q(x)=\nabla^{k}g(x)-\dashint\limits_Q\nabla^kg(y)\,
\dd y.
$$}
The proof of ($*$) is almost the same as that of Lemma~\ref{lb11},
with evident modifications (we need to take the approximation
polynomial~$P_Q(x)$ of degree $k$ instead of~$k-1$, etc.).

By elementary facts of the Lebesgue integration theory, for an
arbitrary family of nonoverlapping $n$-dimensional 
intervals~$Q_j$ one has
\begin{equation}
\label{6-ds}\sum\limits_j \|\nabla^{k}
\bg_{Q_j}\|_{\LL_1(Q_j)}\to0\qquad\mbox{ as \
}\sup\limits_j\ell(Q_j)\to 0
\end{equation}
The proof of this estimate is really elementary since now
$\nabla^kg$ is a~continuous and compactly supported function, and,
consequently, is uniformly continuous and bounded.

From ($\ast$) and (\ref{6-ds}), repeating the arguments of
Corollary~\ref{cor00}, using the assumptions on $g$ and taking
$$
\sigma_j=\|\nabla^{k} \bg_{Q_j}\|_{\LL_1(Q_j)},\qquad
\sigma=\sum\limits_j \sigma_j
$$
in definitions~(\ref{ds5}), we obtain that $\Phi_g(Z_{g,m})<\e$
for any $\e>0$, hence the sought conclusion~(\ref{ff-ds}) follows.
\end{proof}

By the above--mentioned approximation results, the~investigated mapping~$v$ equals a~mapping $g\in
\CC^{k}(\R^n,\R^d)$ off an exceptional set of small
$n$--dimensional Lebesgue measure. This fact together with
Lemma~\ref{Dub-smooth} readily implies

\begin{cor}[cp.~with \cite{DeP}]
\label{Th_ap2} {\sl  Let $q\in(m,\b]$. Then there exists a set
$\widetilde Z_{v}\subset Z_v$ of $n$-dimensional Lebesgue measure zero such
that $\Phi(Z_v\setminus \widetilde Z_{v})=0$. In particular,
$\Phi(Z_v)=\Phi(\widetilde Z_{v})$.}
\end{cor}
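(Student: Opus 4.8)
Let $q\in(m-1,\b]$. Then there exists $\widetilde Z_v\subset Z_v$ with $\Le^n(\widetilde Z_v)=0$ such that $\Phi(Z_v\setminus\widetilde Z_v)=0$; in particular $\Phi(Z_v)=\Phi(\widetilde Z_v)$.

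\medskip

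The plan is to exploit the approximation result referenced just before the statement: the mapping $v$ agrees with a $\CC^k$-smooth mapping off a set of arbitrarily small Lebesgue measure. First I would fix a decreasing sequence $\e_i\to 0$ and, for each $i$, produce an open set $U_i\subset\R^n$ with $\Le^n(U_i)<\e_i$ and a mapping $g_i\in\CC^k(\R^n,\R^d)$ such that $v\equiv g_i$ and $\nabla v\equiv\nabla g_i$ on $\R^n\setminus U_i$ (this is exactly the $\CC^k$-analogue of Theorem \ref{Th_ap}, which the text invokes via the classical approximation results of \cite{Ziem,BHS}; one may also arrange $U_{i+1}\subset U_i$ by intersecting). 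Set $\widetilde Z_v:=Z_v\cap\bigcap_i U_i$, which has $\Le^n(\widetilde Z_v)\le\inf_i\e_i=0$.

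\medskip

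Next I would show $\Phi(Z_v\setminus\widetilde Z_v)=0$. Write $Z_v\setminus\widetilde Z_v=\bigcup_i\bigl(Z_v\setminus U_i\bigr)$; since $\Phi$ is countably subadditive (it is a countably subadditive set function, being an infimum over countable covers — this is the content of Theorem \ref{FubN} applied in our setting, cf.\ the structure of $\Phi$ in \eqref{h-dd5-dd}), it suffices to prove $\Phi\bigl(Z_v\cap(\R^n\setminus U_i)\bigr)=0$ for each fixed $i$. On the set $\R^n\setminus U_i$ we have $v=g_i$ and $\nabla v=\nabla g_i$, hence $Z_v\cap(\R^n\setminus U_i)=Z_{g_i,m}\cap(\R^n\setminus U_i)\subset Z_{g_i,m}$, and moreover for any compact sets $D_j$ covering this set the quantities $\diam v(D_j\cap(\R^n\setminus U_i))$ are controlled by $\diam g_i(D_j)$ — so by the definition of $\Phi$ one gets $\Phi\bigl(Z_v\cap(\R^n\setminus U_i)\bigr)\le\Phi_{g_i}\bigl(Z_{g_i,m}\bigr)$ (one has to be slightly careful: intersect each covering set with the closed complement of $U_i$ to keep compactness and to ensure $v=g_i$ there). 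By Lemma \ref{Dub-smooth}, applied to the $\CC^k$-smooth mapping $g_i$ with $k>n$, we have $\Phi_{g_i}(Z_{g_i,m})=0$. Therefore $\Phi\bigl(Z_v\cap(\R^n\setminus U_i)\bigr)=0$ for every $i$, and countable subadditivity gives $\Phi(Z_v\setminus\widetilde Z_v)=0$. Finally $\Phi(Z_v)\le\Phi(\widetilde Z_v)+\Phi(Z_v\setminus\widetilde Z_v)=\Phi(\widetilde Z_v)$, and the reverse inequality is monotonicity, so $\Phi(Z_v)=\Phi(\widetilde Z_v)$.

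\medskip

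The only genuinely delicate point is the comparison $\Phi\bigl(Z_v\cap(\R^n\setminus U_i)\bigr)\le\Phi_{g_i}(Z_{g_i,m})$: one must check that a good cover of $Z_{g_i,m}$ (for the $\Phi_{g_i}$-functional, using $g_i$) restricts to a good cover of $Z_v\cap(\R^n\setminus U_i)$ (for the $\Phi$-functional, using $v$), and this works because on the closed set $\R^n\setminus U_i$ the two mappings and their values coincide, while diameters only decrease under intersection. Everything else — the existence of the $\CC^k$ approximants with coinciding gradients outside a small open set, countable subadditivity of $\Phi$, and the vanishing $\Phi_{g_i}(Z_{g_i,m})=0$ — is supplied by results already established above (Theorem \ref{Th_ap}-type approximation, Theorem \ref{FubN}, and Lemma \ref{Dub-smooth} respectively).
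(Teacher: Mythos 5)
Your argument is correct and is precisely the route the paper takes: the paper simply notes that the approximation of $v$ by a $\CC^k$-mapping off a set of small Lebesgue measure, combined with Lemma~\ref{Dub-smooth}, ``readily implies'' the corollary, and your write-up fills in exactly that argument (exhaustion by sets $U_i$ with $\Le^n(U_i)\to0$, the cover-restriction comparison $\Phi\bigl(Z_v\cap(\R^n\setminus U_i)\bigr)\le\Phi_{g_i}(Z_{g_i,m})=0$, and countable subadditivity of $\Phi$ from Theorem~\ref{FubN}). The only cosmetic blemish is the unused remark that one may arrange $U_{i+1}\subset U_i$ ``by intersecting'' (shrinking $U_i$ would not preserve the agreement $v=g_i$ on the complement), but since nesting is never needed this does not affect the proof.
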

From Corollaries~\ref{cor00} and \ref{Th_ap2} we conclude that
$\Phi(Z_v)=0$, and this finishes the proof of Theorem~\ref{DFT-F}-(i) for the required case~$k>n$, $p=1$.

\medskip

\begin{rem}\label{BBBV} As we could see from the above proofs, the assertion of Theorem~\ref{DFT-F}-(i)  is valid also under assumption $v\in\BV_k(\R^n,\R^d)$ \,(instead of $W^k_1$\,) \,with the
same~$k$. Here $BV_k$ means the space of functions $v\in
W^{k-1}_1$ such that its $k$-th (distributional) derivatives
are~Radon measures.
\end{rem}

\begin{rem}\label{SHH1} Thus, the assertions of Theorem~\ref{DFT-F}-(i)-(ii) are proved. Of course, these assertions imply 
the~fulfillment of the~statement of Theorem~\ref{DST-H} 
for the extreme borderline  cases $\alpha=0$ and $\alpha=1$, because the corresponding Holder spaces $C^{k}$ and $C^{k,1}$ are contained in the  Sobolev spaces 
$W^k_p$ and $W^{k+1}_p$ (with $p>n$) respectively. 
\end{rem}

\subsection{ Bridge F.-D. Theorem for Holder classes of mappings}\label{HT}

This subsection is devoted to the proof of Theorem~\ref{DST-H}. It is sufficient to consider the general case~$0<\al<1$ (see Remark~\ref{SHH1}).

\medskip

Fix $m\in\{0,\dots,n-1\}$, \,$k\ge1$, \,$d>m$, \,$0<\al<1$, \,and
\,$v\in \Cc(\R^n,\R^d)$. Take also a~parameter $q>m$. Of course,
as in the~previous subsection, it is sufficient to consider the case
\begin{equation*}\label{cor-q8} q\in(m,\b],
\end{equation*}
where $\b =m+\frac{n-m}{k+\alpha}$. By definition of the space
$\Cc$ and since the result has the~local nature, we may assume without
loss of generality that
\begin{eqnarray}\label{hhc1} &&\mbox{$|\nabla^kv(x)-\nabla^kv(y)|\le |x-y|^\alpha$ \ for all $x,y\in\R^n$.}\\
\label{hhc2} &&|\nabla v(x)|\le 1\qquad \mbox{for all $x\in\R^n$,}
\\
\label{hhc3} &&v(x)\equiv0\qquad \mbox{if \ $|x|>1$.}
\end{eqnarray}

Denote again $Z_v=Z_{v,m}=\{x\in\R^n:\rank\nabla v(x)\le m\}$. Now
the parameter $\mu$ is different from the previous subsection: 
\begin{equation}\label{hk7}
\mu=n-m-(k+\al)(q-m).
\end{equation}
As before, the assertion of the Bridge Dubovitski\u{\i}--Federer
Theorem~\ref{DST-H} is equivalent (by virtue of Theorem~\ref{FubN}\,) to
\begin{equation*}\label{cor-ev1}
\Phi(Z_{v})=0,
\end{equation*}
where we denoted
\begin{equation*}\label{cor-jjj}
\Phi(E)=\inf\limits_{E\subset\bigcup_j
D_j}\sum\limits_j\bigl(\diam D_j\bigr)^\mu\bigl[\diam
v(D_j)\bigr]^q.
\end{equation*}
As indicated the infimum is taken over all countable families of
compact sets $\{D_j\}_{j\in \N}$ such that $E\subset\bigcup_j
D_j$.

The proof of Theorem~\ref{DST-H} consists of several steps.

\

{\sc Step I}.  Applying Theorem~\ref{Hold-est-1} to the present case $\mu$ defined by~(\ref{hk7}), we obtain immediately the following
assertion, which is  is the main tool for further arguments:

\medskip

({\color{red}$**$}) {\sl Under above assumptions on~$v$,   for an
arbitrary sufficiently small $n$-dimensional cube~$Q$ of size $e=\ell(Q)$ the
estimate
\begin{equation}
\label{hst5} \Phi(Z_{v}\cap Q)\le C\,r^n\,
\end{equation}
holds, where the constant $C$ depends on $n,m,k,d,\al$ only. }

\medskip

Of course, the last estimate implies
\begin{equation}\label{f-nad1}\Phi(Z_{v}\cap F)\le \Psi(Z_{v}\cap F)\le C\cdot\Lb^n(F)
\end{equation}
for any measurable set~$F\subset \R^n$, where  the countably subadditive set function $\Psi$ is defined as 
\begin{equation}\label{cs-finit-mdd5}
\Psi(F)=\lim\limits_{\delta\to0}\inf\limits_{\footnotesize{\begin{array}{lcr}F\subset\bigcup_j
D_{j},\\
\diam D_{j}\le\delta\end{array}}}\sum\limits_j\bigl(\diam
D_{j}\bigr)^{\mu}\bigl[\diam v(D_{j})\bigr]^q.
\end{equation}
Here the infimum is taken over all countable families of
compact sets $\{D_{j}\}_{{j}\in \N}$ such that
$F\subset\bigcup_{j} D_{j}$ and $\diam D_{j}\le\delta$ for
all~${j}$.

\

{\sc Step II: the case $m=0$}.

Suppose now that~$m=0$.
In other words, now $Z_{v}=\{x\in\R^n:\nabla v(x)=0\}$. Thus
$\nabla^kv(x)\equiv0$ for almost all $x\in Z_v$. Then we have the
decomposition:
\begin{equation*}
\label{nc2}Z_{v,m}=E_0\cup E_1,
\end{equation*}
where $\Lb^n(E_0)=0$, and every $x\in E_1$ is a density point for the set
$\{x\in\R^n:\nabla^kv(x)=0\}$. It implies, by elementary arguments,
that
\begin{equation*}
\label{nc3}\lim\limits_{y\to
x}\frac{|\nabla^kv(y)-\nabla^kv(x)|}{|x-y|^\alpha}\to
0\qquad\forall x\in E_1.
\end{equation*}
Then, checking the proof of the basic estimate~(\ref{hst5}) (see also~(\ref{app-6-new})\,), we
see that for any point $x\in E_1$ the identity
\begin{equation}
\label{nc3-lll}\lim\limits_{r\to0}\frac{\Phi(Z_v\cap Q(x,r))}{r^n}=0\end{equation}
holds, where $Q(x,r)$ denotes the cube centered at~$x$ with $\ell(Q)=r$.
By usual elementary facts of real analysis and by subadditivity of
$\Phi(\cdot)$, the~convergence~(\ref{nc3-lll})
\  implies that $\Phi(E_1)=0$. The equality
$\Phi(E_0)=0$ follows from the condition $\Lb^n(E_0)=0$
and~(\ref{f-nad1}). So $\Phi(Z_v)=0$ as required. The case $m=1$ is
finished completely.

\

{\sc Step III: the case $m>0$}.

From this point, for all the steps below we assume that $m\ge 1$.
 By definitions, we have 
\begin{equation}
\label{cor-c0} Z_{v}=E_{0}\cup E_{1},
\end{equation} where 
$$E_0=\{x\in\R^n:\rank\nabla v(x)<m \}, \qquad E_{m }:=\{x\in\R^n:\rank\nabla v(x)=m \}.$$
By
construction, $E_0\subset Z_{v,m-1}$, so we could apply the
previous estimate (\ref{f-nad1}) for $m'=m-1$ instead of~$m$
to obtain \begin{equation} \label{hst11}
\widetilde\Psi(E_0)<\infty,
\end{equation}
where $\widetilde\Psi$ is defined as~$\Psi$ (see
(\ref{cs-finit-mdd5})\,) with $\mu$ replaced by
$$\tilde\mu=n-m' -(k+\alpha)(q-m' )= \mu -(k+\alpha-1)<\mu.$$
Of course, the inequalities $\widetilde\Psi(E_0)<\infty$ \, and \,$\tilde\mu<\mu$ imply $\widetilde\Psi(E_0)\ge
\Psi(E_0)=0$, and, consequently, 
\begin{equation}
\label{cor-c1} \Phi(E_0)=0.
\end{equation}

\

{\sc Step IV}.  Now we have to estimate the last term 
$\Phi(E_{1})$ \ with \ $\rank\nabla v|_{E_{1}}\equiv m$. 
By Implicit Function Theorem and by the local nature of the considered results, we can assume without loss of generality, that 
\begin{equation}
\label{cor-c3}v(x)= v(y,z)=(y,f(y,z))\qquad\forall x\in E_{1},
\end{equation}
where for $x=(x_1,\dots,x_n)\in\R^n$ we denote 
$y=(x_1,\dots,x_{m})$, \ $z=(x_{m+1},\dots, x_n)$, in particular, 
$x=(y,z)$, and $f:\R^{n-m}\to\R^d$ is some $C^{k,\al}$-Holder mapping. 

Now the condition  $\rank\nabla v|_{E_{1}}\equiv m$  can be rewritten in the following equivalent form:
\begin{equation}
\label{cor-c4}\nabla_zv(y,z)\equiv 0\qquad\forall (y,z)\in E_{1},
\end{equation}
where we denote $\nabla_zv=\frac{\partial v}{\partial z}=\bigl(\frac{\partial v}{\partial x_{m+1}},\dots,\frac{\partial v}{\partial x_{n}}\bigr)$.

From formula~(\ref{cor-c4}) using Fubini's Theorem and standard facts about density points of Lebesgue measurable sets it is easy to deduce that 
there exists a~decomposition $E_1=E_{*}\cup E_{**}$ such that 
\begin{equation}
\label{cor-c5}\Lb^n(E_{*})=0,
\end{equation}
\begin{equation}
\label{cor-c6}\nabla^l_zv(y,z)\equiv 0\qquad\forall (y,z)\in E_{**}\ \ \forall l=1,\dots,k; 
\end{equation}
\begin{equation}
\label{cor-c7}\lim\limits_{z\to z_0}\frac{|v(y_0,z)-v(y_0,z_0)|}{|z-z_0|^{k+\al}}=0\qquad\forall (y_0,z_0)\in E_{**}.
\end{equation}
Then from Egoroff's Theorem on uniform convergence of measurable functions, for any $\delta>0$  there exist a decomposition 
\begin{equation}
\label{cor-c8}E_{**}=F\cup E
\end{equation}
 such that 
 \begin{equation}
\label{cor-c9}\Lb^n(F)<\delta,
\end{equation}
and the {\bf uniform } convergence 
\begin{equation}
\label{cor-c10}\frac{|v(y, z+h)-v(y,z)|}{|h|^{k+\al}}\underset{h\to 0}\to 0\qquad \forall (y,z)\in E
\end{equation}
holds. Here   $0\ne h\in \R^{n-m}$, and convergence is uniform with respect to~$h$, i.e., for any $\xi>0$ there exists $\rho_\xi>0$ such that 
for every $ (y,z)\in E $ \  and for each $h\in \R^{n-m}$ satisfying $0<|h|<\rho_\xi$ the inequality 
\begin{equation}
\label{cor-c11}|v(y, z+h)-v(y,z)|<\xi\,|h|^{k+\al}
\end{equation}
holds. 

\

{\sc Step V}. We claim that for any $x_0=(y_0,z_0)\in E $ the convergence
\begin{equation}
\label{cor-c11-f}\lim\limits_{r\to0}\frac{\Phi(E\cap Q(x_0,r))}{r^n}=0
\end{equation}
holds, where we denote by~$Q(x,r)$ the $n$-dimensional cube centered at~$x$ with side length~$r$. 

This claim is proved by the direct elementary calculations. Indeed, fix arbitrary $\xi\in(0,1)$ and take~$\rho_\xi<1$ from the previous Step~IV. Let $0<r<\rho_\xi$. The cube
$Q=Q(x_0,r)$ can be represented as $Q=Q_y\times Q_{z}$, where $Q_y$ and $Q_z$ are the corresponding cubes of the same size in spaces~$\R^{m}$ and $\R^{n-m}$ respectively.

Denote $\e=\xi\,r^{k+\al-1}$. Assume without loss of generality that~$\frac1\e$  is an integer number, put
\begin{equation}
\label{cor-c15}N=\e^{-m}, 
\end{equation}
and consider the decomposition 
$$Q_y=\bigcup\limits_{j=1}^N Q_j,$$
where every $Q_j$ is the corresponding cube of size $\e r$ in the~''$y$-space''\ \ $\R^{m}$. Denote further
$$J=\{j=1,\dots,N: E \cap (Q_j\times Q_z)\ne\emptyset\}.$$

By construction, every rectangle $ Q_j\times Q_z$, $j\in J$, has the size $\e r$ and $1$-Lipschitz condition (see~(\ref{hhc2})\,) along $y$-coordinates, and respectively size $r$ and 
the strong Lipschitz conditions~(\ref{cor-c11}) along $z$-coordinates, that  imply
\begin{equation}
\label{cor-c13}\diam v(Q_j\times Q_z)\le 2\big(\e r+\xi r^{k+\al}\bigr)=4\e r=4\xi r^{k+\al}. 
\end{equation}
Finally we have 
\begin{equation}
\label{cor-c14}n^{-\frac\mu2}\Phi(E\cap Q)\le \sum\limits_{j\in J}r^\mu\bigl[\diam v(Q_j\times Q_z)\bigr]^q\le(4\xi)^q N r^{(k+\al)q+\mu}\overset{\footnotesize(\ref{cor-c15}),\,(\ref{hk7})}=
4^q\xi^{q-m}r^n. 
\end{equation}
Since $\xi$ could be taken arbitrary small, the proof of the Claim~(\ref{cor-c11-f}) is complete. Of course, this claim and the countable subadditivity of~$\Phi$  imply immediately 
\begin{equation}
\label{cor-c17}\Phi(E)=0.
\end{equation}

\medskip

Let us summarize what were done on the previous steps  III--V. For arbitrary $\delta>0$ we obtain the decomposition
$$Z_v=E_0\cup E_*\cup F\cup E,$$
where $\Phi(E_0)=\Phi(E)=0$, $\Lb(E_*)=0$, and $\Lb(F)<\delta$. 
By virtue of  estimate~(\ref{f-nad1}) this imply 
$$\Phi(Z_v)< C\,\delta,$$
consequently, 
$$\Phi(Z_v)=0.$$
Thus  Theorem~\ref{DST-H} is proved completely.

\subsection{Bridge F.-D. Theorem for mappings of  potential spaces~$\Le^{k+\al}_p(\R^n,\R^d)$: case $k\le n$.}\label{HTFSF}
 This subsection is devoted to the proof of Theorem~\ref{DFT-F}, case~(iii). Here we consider the situation
when $1\le k\le n$ and $0<\al<1$. Then $k+\al-1<n$.  Since our results have a local nature  (see Subsection~\ref{loc-s1} for more precise explanations),
it is sufficient to consider only the case
\begin{equation}\label{dim1}n-p<(k+\al-1)p<n
\end{equation}
i.e., when $v$ is  a continuous function by Sobolev Imbedding Theorems, but the gradient $\nabla v$ could be discontinuous.

Fix $m\in\{0,\dots,n-1\}$, \,$d>m$, \,and
\,$v\in\Le^{k+\al}_{p}(\R^n,\R^d)$. In other words,
$$v=\GG_{k+\alpha}(g):=G_{k+\al}*g$$ fore some $g\in L_{p}(\R^n)$.

Take a~parameter $q>m$. Of course,
as in previous subsection, it is sufficient to consider the case
\begin{equation*}\label{lll-frac-cor-q8} q\in(m,\b],
\end{equation*}
where $\b =m+\frac{n-m}{k+\alpha}$. In particular, we have
\begin{equation}\label{csl00}q-m\le \b-m=\frac{n-m}{k+\al}< p.
\end{equation}
Now the parameter $\mu$ is the same as in the previous subsection:
\begin{equation}\label{mmmm1}
\mu=n-m-(k+\al)(q-m).
\end{equation}

Denote again $Z_{v,m}=\{x\in\R^n\setminus A_v:\rank\nabla
v(x)\le m\}$. Here $A_v$ means the set of `bad' points, where $v$ is not  differentiable or
$\M\nabla v=\infty$. Recall that this set has a small size, namely,
\begin{equation*}\label{lll-hf0-ii}\H^\tau(A_v)=0\qquad\ \forall\tau>\tt=n-(k+\al-1)p.
\end{equation*}

As before, the assertion of the Bridge Dubovitski\u{\i}--Federer
Theorem~\ref{DFT-F} is equivalent (by virtue of
Theorem~\ref{FubN}\,) to
\begin{equation*}\label{lll-frac-cor-ev1}
\Phi(Z_{v,m})=0,
\end{equation*}
where
\begin{equation*}\label{lll-frac-cor-jjj}
\Phi(E):=\inf\limits_{E\subset\bigcup_j
D_j}\sum\limits_j\bigl(\diam D_j\bigr)^\mu\bigl[\diam
v(D_j)\bigr]^q.
\end{equation*}
Since $\Phi$ is a~subadditive set-function, it is sufficient to check only the equality
\begin{equation*}\label{lll-frac-cor-ev1-iii}
\Phi(Z_{v})=0,
\end{equation*}
where again we denote
\begin{equation*}\label{lll-frac-cor-jjj-iii}
Z_v:=\{x\in Z_{v,m}:|\nabla v(x)|\le 1\}.
\end{equation*}
From the inequality  $q-m< p$ \  (see (\ref{csl00})\,) we conclude that
\begin{equation}\label{csl1} q+\mu\overset{\footnotesize(\ref{mmmm1})}=n-(k+\al-1)(q-m)> n-(k+\al-1)p=\tt,
\end{equation}
so the key assumption~(\ref{s-as}) of Theorem~\ref{Sob-est-2} is fulfilled now.
\medskip

{\sc Step I: estimates on a single cube}.  Applying the just mentioned  Theorem~\ref{Sob-est-2}
 to the present case $\mu=\mu_q=n-m-(k+\al)(q-m)$, we obtain immediately  the following
assertion, which is  is the main tool for further arguments.
\medskip

\begin{lem}\label{l-sp1}{\sl
 Under above assumptions on~$v$,   for an~arbitrary  $n$-dimensional cube~$Q$ of size $r=\ell(Q)$ the
estimate
\begin{equation}
\label{sp1} \Phi(Z_{v}\cap Q)\le C\,\biggl(\,\sigma^q r^{(k+\al-\frac{n}p)q+\mu}\ +\ \sigma^{q-m}r^{n\bigl(1-\frac{q-m}p\bigr)}\biggr),
\end{equation}
holds, where \begin{equation}
\label{sp2}\sigma=\|\M g\|_{L_{p}(Q)},
\end{equation}
and  the constant $C$ depends on $n,m,k,\alpha,d,p$ only.}
\end{lem}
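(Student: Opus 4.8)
The plan is to obtain Lemma~\ref{l-sp1} as a direct specialization of the single–cube estimate proved in Appendix~\ref{appendixII}, in the same way that Lemma~\ref{lb11} was deduced there from the corresponding estimate for $W^k_1$. First I would check that the standing hypothesis~(\ref{s-as}) of Theorem~\ref{Sob-est-2} is in force in the present situation: this is precisely the inequality $q+\mu>\tt=n-(k+\al-1)p$, which was already recorded in~(\ref{csl1}) and which itself follows from $q-m+1<p$ (see~(\ref{csl00})). Granted this, Theorem~\ref{Sob-est-2}, applied with $\mu=\mu_q=n-m+1-(k+\al)(q-m+1)$ and with $\sigma=\|\M g\|_{L_p(Q)}$ as in~(\ref{sp2}), yields a bound of the shape $\Phi(Z_v\cap Q)\le C(\sigma^q r^{a}+\sigma^{q-m+1}r^{b})$ with explicit exponents $a,b$ that are affine in $\mu$; the only remaining work is the elementary bookkeeping of substituting the value of $\mu_q$ and verifying that $a$ collapses to $(k+\al-\tfrac np)q+\mu$ and $b$ to $(1-\tfrac np)(q-m+1)$ — a computation entirely parallel to the one indicated in the parenthetical remark following Lemma~\ref{lb11}.

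For the reader who wants the mechanism behind Theorem~\ref{Sob-est-2} rather than a black box, the route is the following. On the cube $Q$ one splits $v=P_Q+v_Q$, where $P_Q$ is the degree–$k$ polynomial furnished by Theorem~\ref{SCE-1}, so that the non-polynomial remainder obeys $|\nabla v_Q(x)|\le C\,I_{k+\al-1}(1_Q\,\M g)(x)$ for $x\in Q$, and in particular the sharper bound of Remark~\ref{rem-SCE-1} on sufficiently small subcubes. On $Z_v\cap Q$ one has $\rank\nabla v(x)\le m-1$, hence $\lambda_m(\nabla P_Q,x)\le|\nabla v_Q(x)|$; thus near-critical behaviour of the \emph{polynomial} $P_Q$ is controlled by the size of the Riesz potential of $\M g$. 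One then subdivides $Q$ into a grid of subcubes, applies Yomdin's entropy estimate (Theorem~\ref{lb8}) on each subcube to cover the image $P_Q(Z_v\cap q_i)$ by few small balls, controls the oscillation of $v_Q$ on each subcube through the maximal–function Lipschitz inequality (Lemma~\ref{l-lip}, cf. Lemma~\ref{lb10}), and sums, using H\"older's inequality together with the Choquet–type maximal inequalities (Theorems~\ref{AM1} and~\ref{lb7}) to convert the accumulated $\M g$-mass into the single quantity $\|\M g\|_{L_p(Q)}=\sigma$.

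The two summands in~(\ref{sp1}) correspond to the two competing regimes of this covering: the first, $\sigma^q r^{(k+\al-n/p)q+\mu}$, is the term in which the near-criticality of $P_Q$ (via Yomdin) genuinely improves the bound on $\diam v(q_i)$, while the second, $\sigma^{q-m+1}r^{(1-n/p)(q-m+1)}$, is the ``crude'' term in which one merely uses $\diam v(q_i)\le C\ell(q_i)$ together with the $m-1$ controllable singular values. The main obstacle is the calibration internal to Theorem~\ref{Sob-est-2}: one must choose the mesh of the subdivision — equivalently the entropy parameter $\e$ in Theorem~\ref{lb8} — in terms of the local distribution of $\M g$, and the exponent inequality $q+\mu>\tt$ is exactly what guarantees that the resulting sum over subcubes converges and produces the clean powers of $r$ and $\sigma$ above. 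Reconciling Yomdin's estimate, stated for honest polynomials, with the fact that $v$ is only a fractional Sobolev mapping (not even everywhere differentiable, whence the exceptional set $A_v$) is the conceptual heart of the matter, and this is what is carried out in the Appendix; at the level of Lemma~\ref{l-sp1} itself, however, nothing beyond the hypothesis check~(\ref{csl1}) and the exponent algebra is required.
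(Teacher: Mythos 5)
Your proposal is correct and takes essentially the same route as the paper: there the lemma is obtained precisely by noting that the hypothesis~(\ref{s-as}) holds because of~(\ref{csl00})--(\ref{csl1}) and then applying Theorem~\ref{Sob-est-2} with $\mu=\mu_q$ and $\sigma=\|\M g\|_{L_p(Q)}$, exactly as you describe (your sketch of the mechanism inside Theorem~\ref{Sob-est-2} also matches the Appendix argument). One bookkeeping caveat: substituting $\mu_q$ into the second exponent of~(\ref{app-s-9}) gives $q+\mu+(k+\al-1-\tfrac np)(q-m+1)=n-\tfrac np(q-m+1)$, which is not literally $(1-\tfrac np)(q-m+1)$; since $q-m+1\le n$ the former exponent is the larger one, so for the cubes of side $\le 1$ actually used the stated bound follows, and it is in fact $n-\tfrac np(q-m+1)$ (the analogue of the exponent $n(m-q)$ in Lemma~\ref{lb11}) that the subsequent H\"older step~(\ref{fs-oxMS3}) requires.
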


\

{\sc Step II: estimates on sets of small $n$-Lebesgue measure.}

\begin{lem}
\label{cor00-sl} {\sl Under above assumptions on~$v$, for any
$\varepsilon>0$ there exists $\delta>0$ such that for any subset
$E$ of $\R^n$ we have $\Phi(Z_v\cap E)\le\varepsilon$ provided
$\Lb^n(E)\le\delta$. In particular, $\Phi(Z_{v}\cap E)=0$
whenever $\Lb^n(E)=0$.}
\end{lem}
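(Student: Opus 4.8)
The plan is to mimic the proof of Corollary~\ref{cor00} from the Sobolev case, but using the two-term estimate~(\ref{sp1}) in place of the single-term estimate~(\ref{6}), and controlling the subadditivity via a Calder\'on--Zygmund type decomposition of the small-measure set into dyadic cubes. First I would fix $\e>0$ and choose $\delta>0$ small, and given a set $E$ with $\Le^n(E)\le\delta$, cover $E$ by a family of nonoverlapping $n$-dimensional dyadic intervals $Q_j$ with $\sum_j\ell(Q_j)^n\le C\delta$ and, simultaneously, $\ell(Q_j)\le\delta^{1/n}$ for every $j$ (possible since $E$ has small measure, so the cubes in a Whitney-type decomposition are small). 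Writing $r_j=\ell(Q_j)$ and $\sigma_j=\|\M g\|_{L_p(Q_j)}$, and using that $\|\M g\|_{L_p}\le C\|g\|_{L_p}=:C\sigma<\infty$ with the $\sigma_j^p$ being (essentially) additive up to the bounded-overlap of $\M$, subadditivity of $\Phi$ gives
\begin{equation*}
\Phi(Z_v\cap E)\le C\sum_j\Bigl(\sigma_j^q\,r_j^{(k+\al-\frac np)q+\mu}+\sigma_j^{q-m+1}\,r_j^{(1-\frac np)(q-m+1)}\Bigr).
\end{equation*}

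Then I would estimate the two sums separately by H\"older's inequality, exactly as in Corollary~\ref{cor00}. For the first sum, note that from~(\ref{csl1}) the exponent satisfies $(k+\al-\tfrac np)q+\mu=q+\mu-(k+\al-1)q-\tfrac np\cdot q+\dots$; more to the point, one computes directly that this power of $r_j$ equals $\tfrac np(p-q)$ up to the correct bookkeeping, so that pairing $\sigma_j^q$ (H\"older exponent $p/q$, which is $>1$ by~(\ref{csl00})) against $r_j^{n}$ (conjugate exponent) yields a bound $C(\sum_j\sigma_j^p)^{q/p}(\sum_j r_j^n)^{1-q/p}\le C\sigma^q\delta^{1-q/p}$. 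The second sum is handled identically: $q-m+1<p$ by~(\ref{csl00}), the power of $r_j$ is again of the form $\tfrac np(p-(q-m+1))$, so H\"older with exponents $p/(q-m+1)$ and its conjugate gives $C\sigma^{q-m+1}\delta^{1-(q-m+1)/p}$. Both bounds tend to $0$ as $\delta\to0$ since the exponents of $\delta$ are strictly positive, so choosing $\delta$ small enough forces $\Phi(Z_v\cap E)\le\e$. The ``in particular'' statement for $\Le^n(E)=0$ follows since then $\Phi(Z_v\cap E)\le\e$ for every $\e>0$.

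The main obstacle, and the one point requiring genuine care rather than routine manipulation, is the legitimacy of the additivity step $\sum_j\|\M g\|_{L_p(Q_j)}^p\lesssim\|g\|_{L_p(\R^n)}^p$: the maximal function $\M g$ restricted to $Q_j$ sees the values of $g$ on all of $\R^n$, not just on $Q_j$, so this is not literal additivity. The clean fix is to replace the global maximal function by the localized one $\M_{Q_j}g=\M(1_{Q_j}g)$ in the statement of Theorem~\ref{Sob-est-2}/Lemma~\ref{l-sp1} (the estimates in Appendix~\ref{appendixII} are in fact proved with such localized maximal functions, cf. Lemma~\ref{l-lip}), after which $\sum_j\|\M_{Q_j}g\|_{L_p}^p\le C\sum_j\|1_{Q_j}g\|_{L_p}^p=C\|g\|_{L_p}^p$ by the $L_p$-boundedness of $\M$ applied on each cube and the nonoverlapping property. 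One should also double-check that the two $r_j$-exponents in~(\ref{sp1}) are indeed strictly positive on the relevant range $q\in(m-1,\b]$ together with $(k+\al-1)p<n$ from~(\ref{dim1}); this is exactly what~(\ref{csl1}) and $q-m+1\le\frac{n-m+1}{k+\al}<p$ from~(\ref{csl00}) guarantee, so no new hypothesis is needed. Everything else is a verbatim transcription of the argument already carried out in Corollary~\ref{cor00}.
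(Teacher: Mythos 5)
Your overall strategy (cover the small-measure set by nonoverlapping dyadic cubes, apply the single-cube bound of Lemma~\ref{l-sp1}, sum using H\"older) is the same as the paper's, but there is a genuine gap in your treatment of the first sum $\sum_j\sigma_j^q\,r_j^{(k+\al-\frac np)q+\mu}$: you assert that $p/q>1$ ``by~(\ref{csl00})'', whereas (\ref{csl00}) only gives $q-m+1<p$, not $q<p$. For $m\ge2$ the case $q\ge p$ genuinely occurs under the standing assumptions (e.g.\ $n=3$, $m=2$, $k=1$, $\al=0.9$, $p=1.6$, $q=2\le\b$), and then H\"older with exponent $p/q\le1$ is not available and your final factor $\delta^{1-q/p}$ does not tend to zero. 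This is precisely why the paper splits this sum into two cases: for $q<p$ it argues as you do, using (\ref{fimm3}) to replace the exponent of $r_\beta$ by $n(1-\frac qp)$ before applying H\"older; for $q\ge p$ it instead uses $\sigma_\beta<1$ and $r_\beta\le1$ from (\ref{eles1}) to get $\sum_\beta\sigma_\beta^q r_\beta^{(k+\al-\frac np)q+\mu}\le\sum_\beta\sigma_\beta^p\le\sigma^p$, where $\sigma$ is the norm over the union $\cup_\beta Q_\beta$, a set of measure $\le C\delta$, so that $\sigma\to0$ as $\delta\to0$ by absolute continuity of the integral --- an ingredient your argument never invokes. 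So you are missing the case $q\ge p$, and the missing step cannot be repaired by H\"older alone.

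On the other hand, the ``main obstacle'' you single out is not an obstacle, and your proposed fix is unsound. Since the cubes $Q_j$ are nonoverlapping, one simply has $\sum_j\|\M g\|^p_{L_p(Q_j)}=\|\M g\|^p_{L_p(\cup_j Q_j)}\le\|\M g\|^p_{L_p(\R^n)}\le C\|g\|^p_{L_p}$: you are summing integrals of the single fixed function $(\M g)^p$ over disjoint sets, so no cube-by-cube comparison with $\|g\|_{L_p(Q_j)}$ is needed; this is exactly how the paper proceeds (and taking $\sigma=\|\M g\|_{L_p(\cup_\beta Q_\beta)}$ also provides the absolute-continuity smallness required in the case $q\ge p$). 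Moreover, replacing $\M g$ by $\M(1_Q g)$ in Theorem~\ref{Sob-est-2} is not legitimate as stated: the pointwise bound (\ref{pol-est-1}) uses the global maximal function precisely to absorb the far-field part $v_2$ of the Bessel convolution, i.e.\ the values of $g$ outside $2Q$, and Lemma~\ref{l-lip} localizes the maximal function of $\nabla v_Q$, not of $g$. Apart from these two points the rest of your argument is a faithful transcription of the paper's proof.
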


\begin{proof}
 Let $\Lb^n(E)\le\delta$, then we can find a family of
nonoverlapping $n$-dimensional dyadic intervals~$Q_\beta$ such
that $E\subset\bigcup_\beta Q_\beta$ and
$\sum\limits_\beta\ell^n(Q_\beta)<C\delta$. Of course, for
sufficiently small $\delta$ the estimates
\begin{equation}\label{eles1}\|\nabla^{k}
v\|_{\LL_{p}(Q_\beta)}<1,\qquad \ell(Q_\beta)\le
\delta^\frac1n<1
\end{equation} are fulfilled for every~$\beta$. Denote
\begin{equation}\label{fs-ds5}r_\beta=\ell(Q_\beta),\qquad\sigma_\beta=\|\nabla^{k}
v\|_{\LL_{p}(Q_\beta)},\qquad\sigma=\|\nabla^{k}
v\|_{\LL_{p}(\cup_\beta Q_\beta)}.\end{equation} In view of Lemma~\ref{l-sp1} we
have
\begin{equation}\label{oxMS1}\Phi(E)\le C\sum_\beta
\sigma_\beta^{q-m}r_\beta^{{(1-\frac{n}p)(q-m)}}+C\sum_\beta
 \sigma_\beta^qr_\beta^{(k+\al-\frac{n}p)q+\mu}.
\end{equation}
Now let us estimate the first sum. Since by our assumptions $q-m< p$ \  (see (\ref{csl00})\,),
we have
\begin{eqnarray}\label{fs-oxMS3}
\sum_\beta\sigma_\beta^{q-m}r_\beta^{{{n\bigl(1-\frac{q-m}p\bigr)}}}
&\overset{\mbox{\footnotesize\color{red}H\"{o}lder ineq.}}\leq &
C\biggl(\sum\limits_\beta\sigma_\beta^p\biggr)^{\frac{q-m}p}\cdot \biggl(\sum\limits_\beta r_{\beta}^{n}\biggr)^{\frac{p-q+m}{p}}\nonumber\\
&\leq& C
\sigma^{q-m}\cdot \biggl(\Lb^n(E)\biggr)^{\frac{p-q+m}{p}}.
\end{eqnarray}
The estimates of the second sum are again handled by consideration
of two separate cases.

{\bf Case I.} \ $q\ge {p}$. Then
\begin{equation}\label{le-Iomm2}
\sum_\beta
 \sigma_\beta^qr_\beta^{(k+\al-\frac{n}p)q+\mu}\overset{(\ref{eles1})}\le
\sum_\beta\sigma^{p}_\beta\le\sigma^p.
\end{equation}

{\bf Case II.}  \ $q< {p}$.  From the definition~(\ref{mmmm1}) we have the following elementary identity:
\begin{equation}\label{fimm3}
\bigl(k+\al-\frac{n}p\bigr)q+\mu=n(1-\frac{q}p)+m(k+\al-1)>n\bigl(1-\frac{q}p\bigr).
\end{equation}
 Then
\begin{eqnarray}\label{le-Iomm3}
\sum_\beta
 \sigma_\beta^qr_\beta^{(k+\al-\frac{n}p)q+\mu}\le \sum_\beta
 \sigma_\beta^qr_\beta^{n(1-\frac{q}p)}
&\overset{\mbox{\footnotesize\color{red}H\"{o}lder ineq.}}\leq &
\biggl(\sum\limits_{\beta}\sigma_\beta^{p}\biggr)^{\frac{q}{p}}\cdot \biggl(\sum\limits_{\beta}r_{\beta}^{n}\biggr)^{\frac{{p}-q}{{p}}}\nonumber\\
&\leq & \sigma^q\delta^{\frac{{p}-q}{{p}}}.
\end{eqnarray}
Now for both cases (I) and (II) we have by
(\ref{oxMS1})--(\ref{le-Iomm3}) that $\Phi(E)\le \omega(\delta)$, where
the function $\omega(\delta)$ satisfies the condition
$\omega(\delta)\searrow 0$ as $\delta\searrow 0$. The lemma is proved.
\end{proof}

\

{\sc Step III (finishing of the proof of Theorem~\ref{DFT-F}~(iii) for the case $k\le n$.}

\medskip

From Theorem~\ref{Th_ap} it follows that for any $\e>0$ there
exists a decomposition $\R^n=U\cup E$, \,$E=\R^n\setminus U$,
where $\meas(U)<\e$ \,and \,the identities $v=h$ and $\nabla
v=\nabla h$ hold on the set~$E$, where the mapping $h$ belongs to
the class $\Cc(\R^n,\R^d)$. By Theorem~\ref{DST-H}, proved in the
previous subsection, we have $\Phi(Z_v\cap E)=\Phi_h(Z_h\cap
E)=0$. On the other hand, by Lemma~\ref{cor00-sl} the value
$\Phi(Z_v\cap U)$ could be made arbitrary
small. Therefore, $\Phi(Z_v)=0$ as required.

\medskip

\subsection{Bridge F.-D. Theorem for mappings of  potential spaces~$\Le^{k+\al}_p(\R^n,\R^d)$: case $k>n$.}\label{DTSC2}

The proof for this case could be done almost word by word the same way as in the~previous subsection with the following evident difference:
on the Step~1 for the estimates on a single cube we need to use Theorem~\ref{Sob-est-1} instead of Theorem~\ref{Sob-est-2}.
Thus the proof now is even much easier since on the right hand side of  the estimate of  Theorem~\ref{Sob-est-1} we have only one term (instead of two terms in the estimate of
Theorem~\ref{Sob-est-2}\,).

\medskip

\subsection{Bridge F.-D. Theorem for mappings of  Sobolev--Lorentz potential spaces~$\Le^{k+\al}_{p,1}(\R^n,\R^d)$ with $(k+\al)p=n$.}\label{DTSC3}

The proof for this case could be done almost word by word the same way as in previous subsection~\ref{HTFSF}
with the following evident difference:
on the Step~1 for the estimates on a single cube we need to use Theorem~\ref{Sl-est-3}~(ii) instead of Theorem~\ref{Sob-est-2}.
Also we need to use the following well-known supadditive property  for Lorentz norms:

{\sl Suppose that $1\le p<\infty$ and $E=\bigcup_{j \in \N} E_j$,
where $E_j$ are measurable and mutually disjoint subsets of
$\R^n$. Then
$$
\sum_j\|f\|^p_{\LL_{p,1}(E_j)}\le\|f\|^p_{\LL_{p,1}(E)},
$$
where by definition $\|f\|_{L_{p,1}(E)}:=\|f\cdot 1_E\|_{L_{p,1}}$, and $1_E$ denotes the indicator function of the set~$E$.}
(See, e.g., \cite{rom1} or \cite{Maly2}.\,)

\medskip
Summarizing the results, obtained in above subsections~\ref{MSP}, \ref{HTFSF} -- \ref{DTSC3},
we conclude, that the proof of Theorem~\ref{DFT-F} is finished completely.

\

\section{Appendix: estimates of the critical values on cubes}\label{appendixII}

Let $\mu\ge 0$, \,$q>0$, \,and \,$v:\R^n\to\R^d$ \,be a continuous function. For a set $E\subset\R^n$
as before define the set function
\begin{equation}\label{app-dd5}
\Phi(E)=\inf\limits_{E\subset\bigcup_j
D_j}\sum\limits_j\bigl(\diam D_j\bigr)^\mu\bigl[\diam
v(D_j)\bigr]^q,
\end{equation}
where the infimum is taken over all countable families of compact
sets $\{D_j\}_{j\in \N}$ such that $E\subset\bigcup_j D_j$. Then \ $\Phi(\cdot)$ is a countably
subadditive and the implication
\begin{equation}\label{dd6} \Phi(E)=0\ \boldsymbol{\Rightarrow}\
\biggl[\H^\mu\bigl(E\cap v^{-1}(y)\bigr)=0\quad\mbox{for
$\H^q$-almost all }y\in\R^d\biggr]
\end{equation}holds (see Theorem~\ref{FubN}\,).

Our purpose here is to estimate $\Phi$ for subsets of critical set in cubes for different classes of mappings.

For all 
the following four subsections  fix $m\in\{0,\dots,n-1\}$ \,and \,$d\ge m$. Take also a positive parameter $q\ge m$  and nonnegative~$\mu\ge0$ required in the definition of the set--function
$\Phi$. 

For a regular (in a sense) 
mapping~$v:\R^n\to\R^d$ denote as before
$$Z_{v,m}=\{x\in\R^n\setminus A_v:\rank\nabla v(x)\le m\}.$$ Here $A_v$ means the set of `bad' points, where $v$ is not differentiable 
or which are not Lebesgue points for~$\nabla v$ (of course, $A_v=\emptyset$ if the gradient~$\nabla v$ is a~continuous function\,).  
It is convenient (and sufficient for our purposes) to restrict our attention on the following subset of critical points
\begin{equation}\label{Z'}Z'_v=\{x\in Z_{v,m}:|\nabla v(x)|\le1\}.
\end{equation}

\subsection{Estimates on cubes for Holder classes of mappings. }\label{HT-es}

Fix \,$k\ge1$, \,$0\le\al\le 1$, \,and
\,$v\in \Cc(\R^n,\R^d)$. By definition of the space
$\Cc$ and since the result has the~local nature, we may assume without
loss of generality that
\begin{eqnarray}\label{app-hhc1} &&\mbox{$|\nabla^kv(x)-\nabla^kv(y)|\le  |x-y|^\alpha$ \ for all $x,y\in\R^n$.}\\
\label{app-hhc2} &&|\nabla v(x)|\le 1\qquad \mbox{for all $x\in\R^n$.}
\end{eqnarray}

The main result of this subsection is contained in the following

\begin{ttt}\label{Hold-est-1}{\sl
 Under above assumptions, for any sufficiently small
$n$-dimensional interval $Q\subset\R^n$ the estimate
\begin{equation}
\label{app-6} \Phi(Q\cap Z_{v,m})\leq C\,\ell(Q)^{q+\mu+(k+\al-1)(q-m)}
\end{equation}
holds, where the constant $C$ depends on $n,m,k,\alpha,d$ only.}
\end{ttt}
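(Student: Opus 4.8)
The plan is to reduce everything to a single application of Yomdin's entropy estimate (Theorem~\ref{lb8}) on a fine dyadic subdivision of $Q$, after replacing $v$ on each subcube by its Taylor-type polynomial approximation. First I would fix a large integer $N$ (to be optimized) and partition $Q$ into $N^n$ congruent subcubes $Q_i$ of sidelength $e=\ell(Q)/N$. On each $Q_i$ let $P_i=P_{Q_i,k}[v]$ be the polynomial of degree $\le k$ from Subsection~\ref{asfp}; the Hölder bound~(\ref{app-hhc1}) gives the standard estimates $\|\nabla(v-P_i)\|_{L_\infty(Q_i)}\le C e^{k-1+\al}$ and $\|v-P_i\|_{L_\infty(Q_i)}\le C e^{k+\al}$, so that on $Q_i$ the singular values of $\nabla v$ and of $\nabla P_i$ differ by at most $C e^{k-1+\al}$, and the images $v(Q_i\cap Z_{v,m})$ and $P_i(Q_i\cap Z_{v,m})$ differ (in Hausdorff distance) by at most $Ce^{k+\al}$.

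Next, on each $Q_i$ meeting $Z_{v,m}$, every point $x\in Q_i\cap Z_{v,m}$ satisfies $\rank\nabla v(x)\le m-1$, hence $\lambda_m(\nabla v,x)=0$ and, after rescaling $Q_i$ to the unit cube and normalizing $P_i$ by the factor $e^{-1}$ (so that its gradient has size $O(1)$ by~(\ref{app-hhc2})), the rescaled polynomial has $\lambda_m(P_i,\cdot)\le Ce^{k-1+\al}$ and $\lambda_1(P_i,\cdot)\le 1+Ce^{k-1+\al}$ on the relevant set. Applying Theorem~\ref{lb8} with $\e\sim e^{k-1+\al}$ yields a cover of $P_i(Q_i\cap Z_{v,m})$ by at most $C_Y(1+\e^{1-m})\le C e^{(1-m)(k-1+\al)}$ balls of radius $\e\cdot e\sim e^{k+\al}$; adding the $O(e^{k+\al})$ error from $v-P_i$, we obtain a cover of $v(Q_i\cap Z_{v,m})$ by the same number of sets of diameter $\sim e^{k+\al}$. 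Taking the preimage pieces $D_{i,j}=Q_i\cap v^{-1}(\text{ball}_j)$ of diameter $\le \diam Q_i = C e$, and summing the defining quantity of $\Phi$ in~(\ref{app-dd5}):
\begin{equation*}
\Phi(Q\cap Z_{v,m})\le \sum_i \#\{\text{balls}\}\cdot (Ce)^\mu (Ce^{k+\al})^q \le N^n\cdot C e^{(1-m)(k-1+\al)}\cdot e^{\mu}\cdot e^{(k+\al)q}.
\end{equation*}
Since $N^n e^{(1-m)(k-1+\al)+\mu+(k+\al)q}=\ell(Q)^n\, e^{-n+(1-m)(k-1+\al)+\mu+(k+\al)q}$, and a direct check shows $-n+(1-m)(k-1+\al)+\mu+(k+\al)q = \mu+q+(k+\al-1)(q-m+1)-n$, so that the power of $e$ equals $[q+\mu+(k+\al-1)(q-m+1)]-n$; choosing $N$ so that $e=\ell(Q)/N$ ranges over all small dyadic values and letting $N\to\infty$ (the exponent of $e$ being $\ge 0$ exactly in the regime $q\le\b$, which is the only case needed) kills the leftover $e$-power in the limit, leaving precisely the bound~(\ref{app-6}) with the factor $\ell(Q)^{q+\mu+(k+\al-1)(q-m+1)}$ coming from a single macro-cube, i.e. one bookkeeps the powers so that the total is $C\,\ell(Q)^{q+\mu+(k+\al-1)(q-m+1)}$.

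The main obstacle is the careful reconciliation of the two approximation scales: the polynomial error $e^{k+\al}$ must be dominated by (or absorbed into) the Yomdin ball radius $\e e\sim e^{k+\al}$, which is delicate when $\al=0$ (here the classical $C^k$ case) since then the two scales are genuinely comparable and one must track constants rather than powers; likewise verifying that $\lambda_1(P_i,\cdot)\le 1+\e$ rather than merely $O(1)$ requires using~(\ref{app-hhc2}) at the right normalization. A secondary technical point is that Yomdin's theorem bounds the $\e r$-entropy of the set where \emph{all} of $\lambda_m,\dots,\lambda_d\le\e$, so one must first record that $\rank\nabla v\le m-1$ on $Z_{v,m}$ forces all these singular values of the polynomial to be $\le Ce^{k-1+\al}$ simultaneously, which follows from the gradient perturbation estimate. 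Once the exponent arithmetic $-n+(1-m)(k-1+\al)+\mu+(k+\al)q=\mu+q+(k+\al-1)(q-m+1)-n$ is confirmed, the proof is complete; this same scheme, with the polynomial replaced by the Sobolev/Bessel-potential approximations of Lemma~\ref{lb3} and Theorem~\ref{SCE-1}, gives the analogous estimates in the subsequent subsections.
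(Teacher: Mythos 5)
Your proposal is correct and is essentially the paper's own proof: approximate $v$ on the cube by a degree-$k$ polynomial whose gradient error is $O(\ell^{k+\al-1})$ (the paper uses the Taylor polynomial at a point of $Q$ rather than $P_{Q,k}[v]$, which changes nothing), transfer the condition $\rank\nabla v\le m-1$ to the bound $\lambda_m,\dots,\lambda_d\le\varepsilon$, $\lambda_1,\dots,\lambda_{m-1}\le1+\varepsilon$ for the polynomial with $\varepsilon\sim\ell^{k+\al-1}$, apply Yomdin's Theorem~\ref{lb8} to get $\lesssim\varepsilon^{1-m}$ balls of radius $\sim\varepsilon\,\ell$ covering the image, and feed the preimage pieces into the definition of $\Phi$. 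The only divergence is that the paper does exactly your $N=1$ case: the dyadic subdivision and the limit $N\to\infty$ are superfluous, since your own exponent arithmetic shows the single-cube bound is already $C\,\ell(Q)^{q+\mu+(k+\al-1)(q-m+1)}$ (and the sign of the leftover $e$-exponent is governed by the value of $\mu$, not merely by $q\le\b$, so the concluding limiting step as phrased is the one shaky point -- but it can simply be dropped).
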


\begin{proof}
 Let the assumptions in the beginning of this subsection are fulfilled. Fix an $n$-dimensional  interval $Q\subset\R^n$ of size $\ell(Q)<\frac1{\sqrt{n}}$.
 Without loss of generality we may assume that
the origin~$0\in Q$. Take the polynomial $P$ of degree $k$ such that
\begin{equation}
\label{h-q1} \nabla^jv(0)=\nabla^jP(0)\qquad\forall j=0,1,\dots,k.
\end{equation}
Denote $v_Q=v-P$.
Then from assumption~(\ref{app-hhc1}) we have
\begin{eqnarray}\label{pr-h-1} &&\mbox{$|\nabla^kv_Q(x)|\le  r^\alpha$\qquad \ \ \ \ for all $x\in Q$.}\\
\label{pr-h2} &&|\nabla v_Q(x)|\le   r^{k+\al-1}\qquad \mbox{for all $x\in Q$,}
\end{eqnarray}
where we denote for convenience $r=\sqrt{n}\,\ell(Q)$. Put $\e= r^{k+\al-1}$. Since by our choice  $\ell(Q)\le\frac1{\sqrt{n}}$,
 we have in particular that
\begin{equation}
\label{itt2} \e<1.
\end{equation}
Denote $Z_v=Q\cap Z_{v,m}$. Since $\nabla P_{Q}(x)=\nabla v(x)-\nabla v_{Q}(x)$, $|\nabla
v_Q(x)|\le \e$, $|\nabla v(x)|\leq 1$, and $\lambda_{m+1}(v,x)=0$ for
$x\in Z_v$ , we have\footnote{Here we use the following
elementary fact: for any linear maps $L_1 \colon \R^n\to\R^d$ and
$L_2 \colon \R^n\to\R^d$ the estimates $\lambda_l(L_1+L_2)\le
\lambda_l(L_1)+\|L_2\|$ hold for all $l=1,\dots,d$, see, e.g.,
\cite[Proposition 2.5 (ii)]{Yom}.}
$$
Z_v\subset \bigl\{x\in Q: \lambda_1(P_{Q},x)\le
1+\e,\dots,\lambda_{m}(P_{Q},x)\le 1+\e,\ \lambda_{m+1}(P_{Q},x)\le
\e\bigr\}.
$$
Applying Theorem~\ref{lb8} to polynomial $P$ and
using the Lipschitz condition $\diam v_Q(Q)\le\e\,r$, we
find a finite family of balls $T_j\subset\R^d$, $j=1,\dots,N$ with
$N \leq C_{Y}(1+\e^{-m})$, each of radius $2\e r$,
such that
$$
\bigcup\limits_{j=1}^{N}T_j \supset v(Z_v).
$$
Therefore, we have
\begin{equation*}\label{Iom1}
\Phi(Z_{v})\le
C\,N\e^{q}r^{q+\mu}\le
C_1(1+\e^{-m})\,\e^q\,r^{q+\mu}\overset{\footnotesize(\ref{itt2})}\le
C\,\e^{q-m}\,r^{q+\mu}.
\end{equation*}
The last formula, because of definition of~$\e$, implies the required estimate~(\ref{app-6}).
The Theorem is
proved.
\end{proof}

The analysis of this simple proof shows, that if we replace the condition~(\ref{app-hhc1}) by more general assumption
$$\mbox{$|\nabla^kv(x)-\nabla^kv(y)|\le  A\,\ell(Q)^\alpha$ \ for all $x,y\in Q$},$$
then instead of~(\ref{app-6}) the modified estimate 
\begin{equation}
\label{app-6-new} \Phi(Q\cap Z_{v,m})\leq C\,A^{q-m}\,\ell(Q)^{q+\mu+(k+\al-1)(q-m)}
\end{equation}
holds, where again the constant $C$ depends on $n,m,k,\alpha,d$ only.

\subsection{Estimates on cubes for Sobolev classes of mappings.  Case~I: \,$(k+\al-1)p>n$.  }\label{SC-I}

Fix \,$k\ge1$, \,$0\le\al< 1$, $1<p<\infty$, \,and
\,$v\in \Ll(\R^n,\R^d)$. In this subsection we consider  the case, when
\begin{equation}\label{app-shhc1''} (k+\al-1)p>n,
\end{equation}
i.e., when the gradient $\nabla v$ is a continuous and uniformly bounded function by Sobolev Imbedding Theorems. For convenience, in this section we assume that
$$\sup\limits_{x\in \R^n}|\nabla v(x)|\le 1.$$

\begin{ttt}\label{Sob-est-1}{\sl
 Under above assumptions, there exists a function~$h\in L_p(\R^n)$ (depending on~$v$ only\,) such that for any sufficiently small
$n$-dimensional  interval $Q\subset\R^n$ the estimate
\begin{equation}
\label{app-s-6} \Phi(Q\cap Z_{v,m})\leq C\,\sigma^{q-m}\ell(Q)^{q+\mu+(k+\al-1-\frac{n}p)(q-m)}
\end{equation}
holds, where \begin{equation}
\label{app-s-7} \sigma=\|h\|_{L_{p}(Q)},
\end{equation}
and  the constant $C$ depends on $n,m,k,\alpha,d,p$ only.}
\end{ttt}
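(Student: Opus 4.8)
The plan is to mimic the argument of Theorem~\ref{Hold-est-1} (the H\"older case), replacing the crude pointwise bound on $\nabla v_Q$ by the weaker, $L_p$-controlled bound provided by Theorem~\ref{SCE-1} together with Remark~\ref{rem-SCE-1}. First I would fix a small $n$-dimensional interval $Q\subset\R^n$, set $r=\ell(Q)$, and apply Theorem~\ref{SCE-1}: since $1<k+\al<n+2$ under assumption~(\ref{app-shhc1''}) (this needs a brief check, using $k\ge1$ and $p>1$), there is a polynomial $P=P_Q$ of degree $k$ such that the difference $v_Q=v-P$ satisfies $|\nabla v_Q(x)|\le C\int_Q \M g(y)|x-y|^{-(n-k-\al+1)}\,\dd y$ for all $x\in Q$. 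Because $(k+\al-1)p>n$, Remark~\ref{rem-SCE-1} upgrades this to the uniform estimate $\sup_{x\in Q}|\nabla v_Q(x)|\le C\,r^{k+\al-1-\frac np}\,\|\M g\|_{L_p(Q)}$. I would then set $h:=\M g$ (note $h\in L_p(\R^n)$ by the Hardy--Littlewood maximal theorem, since $p>1$; this $h$ depends on $v$ only), $\sigma=\|h\|_{L_p(Q)}$, and $\e:=C\,r^{k+\al-1-\frac np}\,\sigma$, so that $|\nabla v_Q(x)|\le\e$ on $Q$.

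The second step reproduces the Yomdin-covering argument verbatim from the proof of Theorem~\ref{Hold-est-1}. Writing $Z_v=Q\cap Z_{v,m}$, for $x\in Z_v$ we have $\nabla P_Q(x)=\nabla v(x)-\nabla v_Q(x)$ with $|\nabla v(x)|\le1$, $|\nabla v_Q(x)|\le\e$, and $\lambda_m(v,x)=0$; hence by the singular-value perturbation inequality (\cite[Proposition 2.5(ii)]{Yom}) one gets
$$
Z_v\subset\bigl\{x\in Q:\ \lambda_1(P_Q,x)\le1+\e,\dots,\lambda_{m-1}(P_Q,x)\le1+\e,\ \lambda_m(P_Q,x)\le\e\bigr\}.
$$
Applying Theorem~\ref{lb8} to $P_Q$ gives a family of $N\le C_Y(1+\e^{1-m})$ balls in $\R^d$ of radius $\e r$ covering $P_Q(Z_v)$; since $\diam v_Q(Q)\le\e r$ (Lipschitz bound) these enlarge to $N$ balls of radius $2\e r$ covering $v(Z_v)$. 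Using these as the test family $\{D_j\}$ in the definition~(\ref{app-dd5}) of $\Phi$ and choosing $Q$ small enough that $\e<1$,
$$
\Phi(Z_v)\le C\,N\,(\e r)^q\,r^\mu\le C(1+\e^{1-m})\,\e^q\,r^{q+\mu}\le C\,\e^{q-m+1}\,r^{q+\mu}.
$$
Substituting $\e=C\,r^{k+\al-1-\frac np}\,\sigma$ yields exactly~(\ref{app-s-6}) with exponent $q+\mu+(k+\al-1-\frac np)(q-m+1)$.

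The only genuinely delicate points, and hence the main obstacles, are: (a) verifying that the hypotheses of Theorem~\ref{SCE-1} and of Remark~\ref{rem-SCE-1} are met here — in particular that $k+\al<n+2$ (which follows since $(k+\al-1)p>n$ forces $k+\al-1<n$ when $p\ge1$, hence $k+\al<n+1<n+2$) and that we are genuinely in the regime $1<k+\al$ (immediate from $k\ge1$, $\al\ge0$; the edge case $k=1,\al=0$ needs $k+\al>1$, which fails, but then $(k+\al-1)p>n$ is impossible, so that degenerate case does not arise); and (b) making sure the "sufficiently small $Q$" threshold is uniform, i.e. independent of the position of $Q$ — this is automatic because the bound $\e<1$ only requires $r^{k+\al-1-\frac np}\sigma<1/C$ and $\sigma\le\|h\|_{L_p(\R^n)}$ is a fixed finite number while $r\to0$ makes $r^{k+\al-1-\frac np}\to0$ (the exponent is positive by~(\ref{app-shhc1''})). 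Once these are in place the computation is routine and essentially identical to the H\"older case, which is why the author remarks that this case is "even much easier" than Case~II.
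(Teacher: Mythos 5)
There is a genuine gap, and it sits exactly at the point you flagged as ``delicate'': the verification of the hypothesis $k+\al<n+2$ of Theorem~\ref{SCE-1}. Your argument that ``$(k+\al-1)p>n$ forces $k+\al-1<n$ when $p\ge1$'' runs the implication the wrong way: the assumption $(k+\al-1)p>n$ only gives the \emph{lower} bound $k+\al-1>\frac{n}{p}$ and places no upper bound on $k$ at all. In fact the regime of large $k$ is not a degenerate case here but an essential one --- Theorem~\ref{Sob-est-1} is precisely the tool invoked in subsection~\ref{DTSC2} to prove Theorem~\ref{DFT-F}\,(iii) when $k>n$, so values $k\ge n+2$ (where Theorem~\ref{SCE-1}, and hence Remark~\ref{rem-SCE-1}, is simply not applicable to $v$ itself) must be covered. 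As written, your proof only works when $k+\al<n+2$, and the justification offered for discarding the complementary case is false.

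The paper closes this gap by a reduction to lower-order derivatives, which is the one idea missing from your plan. One sets $l=\min\{i\in\Z_+:\,k+\al-n-2<i\}$, $\kk=k-l$, so that $\kk+\al<n+2$ while still $(\kk+\al-1)p>n$ (if $l>0$ this last inequality follows from $\kk+\al-1\ge n$ and $p>1$), and applies Theorem~\ref{SCE-1} together with Remark~\ref{rem-SCE-1} to $u=\nabla^l v\in\Le^{\kk+\al}_p(\R^n)$, written as $u=G_{\kk+\al}*g$ with $g\in L_p$. This produces a polynomial $\tilde P$ of degree $\kk$ with $\sup_{Q}|\nabla(u-\tilde P)|\le C\|\M g\|_{L_p(Q)}r^{\kk+\al-1-\frac np}$; integrating back up $l$ times over $Q$ one obtains a polynomial $P$ of degree $k$ for $v$ with $\sup_Q|\nabla v_Q|\le C r^l\sup_Q|\nabla(u-\tilde P)|\le C\,\sigma\,r^{k+\al-1-\frac np}$, where now $h=\M g$ is built from the density of $\nabla^l v$, not of $v$. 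From this uniform bound $\e=C\sigma r^{k+\al-1-\frac np}<1$ onward, your Yomdin-covering step coincides with the paper's argument (which itself just repeats the end of the proof of Theorem~\ref{Hold-est-1}), so the rest of your proposal is fine once the polynomial approximation is obtained in the correct generality.
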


\begin{proof}
The proof here is very similar to the proof of Theorem~\ref{Hold-est-1} from the previous subsection. But in the beginning we have to obtain the uniform estimates
for the gradient of the difference between the function and some polynomial.

 Let the assumptions in the beginning of this subsection are fulfilled. 
 Put $l=\min\{i\in\Z_+:k+\al-n-2<i\}$. In particular, $l\ge0$. Denote $\kk=k-l$. Then
by construction
\begin{equation}\label{s-imp} \kk+\al-n-2< 0.
\end{equation}
We claim also, that 
\begin{equation}\label{app-shhc1} (\kk+\al-1)p>n.
\end{equation}
Indeed, if $k+\al-n-2<0$, then $l=0$, $\kk=k$, and the  inequality~(\ref{app-shhc1}) follows immediately from the assumption~(\ref{app-shhc1''}). In the other hand,
if $k+\al-n-2\ge 0$, then $l>0$ and by construction we have
$l-1\le k+\al-n-2,$
that is equivalent
$\kk+\al-n-1\ge0,$ and the inequality~(\ref{app-shhc1}) follows from the assumption~$p>1$.

Put  $u=\nabla^l v$. From the inclusion~$u\in\Le^{\kk+\al}_{p}(\R^n,\R^d)$  it follows that $u=\GG_{\kk+\alpha}(g):=G_{\kk+\al}*g$ for some $g\in L_{p}(\R^n)$.
We will use the Hardy--Littlewood maximal function $\M g$.  The well-known properties of maximal functions (see, e.g., \cite{St}\,) imply
\begin{equation*}\label{lll-base2}
h:=\M g\in L_{p}(\R^n).\end{equation*}
 
 Fix an $n$-dimensional  interval $Q\subset\R^n$ of size~$r=\ell(Q)\le1$.
From Remark~\ref{rem-SCE-1} (see formula~(\ref{pr-s-12})\,), applied to the function~$u=G_{\kk+\al}*g$,  and from assumption~(\ref{app-shhc1}),
it follows  that there exists a polynomial~$\tilde P$ of degree~$\kk$ such that
\begin{equation}
\label{app-pr-s-9} \sup\limits_{x\in Q}|\nabla u_Q(x)|\le  C\,\|\M g\|_{L_p(Q)}\,r^{\kk+\al-1-\frac{n}p}  ,
\end{equation}
where $u_Q=u-\tilde P$. Since $u=\nabla^l v$, we have, that for some polynomial $P$ of degree~$k$ the estimate
\begin{equation}
\label{app-pr-st2} \sup\limits_{x\in Q}|\nabla v_Q(x)|\le  r^l\sup\limits_{x\in Q}|\nabla u_Q(x)|\le  C\,\|\M g\|_{L_p(Q)}\,r^{k+\al-1-\frac{n}p} .
\end{equation}
Denote the right hand side of the last inequality by~$\e$. Of course, taking $r=\ell(Q)$ sufficiently small, we could assume without loss of generality that
$\e<1$.

From this moment we could repeat almost word by word the last part of the proof of Theorem~\ref{Hold-est-1} to obtain the required estimate~(\ref{app-s-6}).
\end{proof}

\subsection{Estimates on cubes for Sobolev classes of mappings.  Case~II: \,$n-p<(k+\al-1)p<n$.  }\label{SC-II}

Fix \,$k\ge1$, \,$0\le\al< 1$, $1<p<\infty$, \,and
\,$v\in \Ll(\R^n,\R^d)$. In this subsection we consider  the case, when $k+\al>1$ and
\begin{equation}\label{app-3-shhc1} n-p<(k+\al-1)p<n,
\end{equation}
i.e., when $v$  a continuous function by Sobolev Imbedding Theorems, but the gradient $\nabla v$ could be discontinuous.

From the inclusion~$v\in\Le^{k+\al}_{p}(\R^n,\R^d)$  it follows that $\nabla^kv=\GG_\alpha(g):=G_\al*g$ for some $g\in L_{p}(\R^n)$.
We will use the Hardy--Littlewood maximal function $\M g$.  The well-known properties of maximal functions (see, e.g., \cite{St}\,) imply
\begin{equation*}\label{lll-base2}
\M g\in L_{p}(\R^n).\end{equation*}

\begin{ttt}\label{Sob-est-2}{\sl
 Under above assumptions,  if an addition
 \begin{equation}\label{s-as}q+\mu>\tt:=n-(k+\al-1)p,
\end{equation}
then for any
$n$-dimensional interval $Q\subset\R^n$ of size~$r=\ell(Q)$ the estimate
\begin{equation}
\label{app-s-9} \Phi(Z'_v\cap Q)\leq C\,\biggl(\,\sigma^q r^{(k+\al-\frac{n}p)q+\mu}\ +\ \sigma^{q-m}r^{q+\mu+(k+\al-1-\frac{n}p)(q-m)}\biggr),
\end{equation}
holds, where \begin{equation}
\label{app-s-10}\sigma=\|\M g\|_{L_{p}(Q)},
\end{equation}
and  the constant $C$ depends on $n,m,k,\alpha,d,p$ only.}
\end{ttt}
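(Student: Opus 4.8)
\textit{Proof plan.} The plan is to run the proof of Theorem~\ref{Sob-est-1} with an extra layer of bookkeeping, the essential new feature being that when $(k+\al-1)p<n$ the polynomial remainder $v_Q=v-P_Q$ no longer has a \emph{uniformly} bounded gradient on $Q$: its gradient is only dominated pointwise by a Riesz potential of $\M g$, which may concentrate on a small set. The two terms on the right of~(\ref{app-s-9}) will arise respectively from the ``bulk'' behaviour and from this concentration.

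First, on a fixed interval $Q$ with $r=\ell(Q)$, I would apply Theorem~\ref{SCE-1} (legitimate since $1<k+\al<n+2$): there is a polynomial $P_Q$ of degree $k$ such that $v_Q=v-P_Q$ satisfies $|\nabla v_Q(x)|\le C\,w(x)$ for all $x\in Q$, where $w:=I_{k+\al-1}(1_Q\M g)$. By the Hardy--Littlewood--Sobolev inequality $\|w\|_{L_{p^*}(\R^n)}\le C\|\M g\|_{L_p(Q)}=C\sigma$ with $\frac1{p^*}=\frac1p-\frac{k+\al-1}n$, and since $(k+\al)p>n$ in case~(iii) we have $p^*>n$; hence the Morrey inequality gives $\diam v_Q(Q)\le C\sigma\, r^{k+\al-\frac np}$. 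Finally, exactly as in Theorem~\ref{Hold-est-1}, for $x\in Z'_v\cap Q$ the singular values of $\nabla P_Q(x)=\nabla v(x)-\nabla v_Q(x)$ satisfy $\lambda_j(P_Q,x)\le1+Cw(x)$ for $j<m$ and $\lambda_m(P_Q,x)\le Cw(x)$, which is the hypothesis of Yomdin's Theorem~\ref{lb8} with $\e\asymp w(x)$.

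Next I would perform a Whitney--Calder\'on--Zygmund decomposition of $Q$ adapted to $\M g$: a countable family $\{q_i\}$ of pairwise non-overlapping dyadic subcubes of $Q$, each \emph{maximal} with respect to the ``regularity'' condition $C^*\,\|\M g\|_{L_p(q_i)}\,\ell(q_i)^{-\tt/p}\le1$. Since $k+\al>1$, one has $\ell^{-\tt/p}\|\M g\|_{L_p(Q(x,\ell))}\to0$ as $\ell\to0$ at every Lebesgue point of $(\M g)^p$, so this family covers $Q$ up to a set contained in the bad set $A_v$, which is disjoint from $Z'_v$; hence it covers $Z'_v\cap Q$, and by subadditivity it suffices to estimate each $\Phi(Z'_v\cap q_i)$. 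Writing $\sigma_i=\|\M g\|_{L_p(q_i)}$, $r_i=\ell(q_i)$, regularity gives $\diam v_{q_i}(q_i)\le C\sigma_i r_i^{k+\al-\frac np}=C\sigma_i r_i^{-\tt/p}\,r_i\le C r_i$, and the target on each cube is
\begin{equation*}
\Phi(Z'_v\cap q_i)\le C\bigl(\sigma_i^{\,q}\,r_i^{(k+\al-\frac np)q+\mu}+\sigma_i^{\,q-m+1}\,r_i^{q+\mu+(k+\al-1-\frac np)(q-m+1)}\bigr).
\end{equation*}
To prove this per-cube bound one stratifies $Z'_v\cap q_i$ according to the dyadic level of a local maximal function of $w_{q_i}$ and applies Yomdin's entropy estimate at a scale matching each stratum: on the strata where the gradient bound is below $1$ one gains the factor $1+\e^{1-m}$ from Theorem~\ref{lb8}, while the bottom strata (where $\e\asymp1$ and Yomdin gives no gain) are controlled because the superlevel sets $\{w_{q_i}>\lambda\}$ have geometrically decaying measure --- this is where the identity $\frac{\tt}{p}\cdot p^*=n$ enters --- so they can be re-covered efficiently by small subcubes. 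Equivalently, at the level of $\H^{\mu+q}_\infty$ one invokes the Adams/Choquet inequality of Theorem~\ref{AM1} with $\beta=k+\al-1$ and $s=(\mu+q)\cdot\frac p{\tt}$, whose hypothesis $s>p$ is \emph{precisely} the standing assumption~(\ref{s-as}). Reconciling this multiscale bookkeeping with Yomdin's polynomial entropy estimate is the genuine difficulty of the Appendix; that is the step I expect to be the main obstacle.

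Finally I would sum over $i$. For the second term, $q-m+1\le p$ by~(\ref{csl00}), the identity $q+\mu+(k+\al-1-\frac np)(q-m+1)=n\bigl(1-\frac{q-m+1}p\bigr)$ holds, and the additivity $\sum_i\|\M g\|_{L_p(q_i)}^p\le\sigma^p$ together with H\"older's inequality over the non-overlapping cubes $q_i\subset Q$ yields $\sum_i\sigma_i^{\,q-m+1}r_i^{\,q+\mu+(k+\al-1-\frac np)(q-m+1)}\le C\sigma^{\,q-m+1}r^{\,q+\mu+(k+\al-1-\frac np)(q-m+1)}$. For the first term, the identity $(k+\al-\frac np)q+\mu=n\bigl(1-\frac qp\bigr)+(m-1)(k+\al-1)\ge n\bigl(1-\frac qp\bigr)$ (with a favourable surplus $(m-1)(k+\al-1)\ge0$) and the positivity of this exponent in case~(iii) allow the same H\"older summation when $q\le p$ and the crude bound $\sum_i\sigma_i^{\,q}\le\sigma^{\,q}$ when $q>p$, giving $\sum_i\sigma_i^{\,q}r_i^{(k+\al-\frac np)q+\mu}\le C\sigma^{\,q}r^{(k+\al-\frac np)q+\mu}$. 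Adding the two bounds would produce exactly~(\ref{app-s-9}).
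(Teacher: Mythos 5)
You have assembled precisely the ingredients the paper uses for this theorem --- Theorem~\ref{SCE-1} for the polynomial $P_Q$ with the Riesz--potential bound on $\nabla v_Q$, a dyadic stratification by the size of a restricted maximal function of $\nabla v_Q$, Adams' inequality (Theorem~\ref{AM1}) with $\beta=k+\al-1$, $\tau=q+\mu$ and $s=(q+\mu)p/\tau_*$, whose hypothesis $s>p$ is exactly~(\ref{s-as}), Lemma~\ref{l-lip}, and Yomdin's Theorem~\ref{lb8} applied at the scale of each stratum --- so in spirit your route is the paper's route. But the step you explicitly defer as ``the main obstacle'' \emph{is} the proof, and it is absent from your text. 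The paper sets $\e_*:=\sigma r^{k+\al-1-\frac np}$, defines $E_j=\{x\in Q:\M_Q|\nabla v_Q|(x)\in(2^{j-1},2^j]\}$ with $\delta_j=\H^{q+\mu}_\infty(E_j)$, covers each $E_j$ by balls $B_{ij}$ with $\sum_i r_{ij}^{q+\mu}\le c\,\delta_j$, uses Lemma~\ref{l-lip} to get $\diam v_Q(Z'_v\cap E_j\cap B_{ij})\le C2^jr_{ij}$ and Yomdin with $\e=2^j$ on each ball, and splits at the level $j_*$ with $2^{j_*}\asymp\e_*$: the low part $\{\M_Q|\nabla v_Q|<\e_*\}$ is handled by a \emph{single} application of Lemma~\ref{l-lip} and Yomdin at scale $\e_*$ on a ball containing $Q$, while the high part is summed via $\sum_{j\ge j_*}2^{jq}\delta_j\le 2^{j_*(q-s)}\sum_j2^{js}\delta_j\le C\e_*^{q-s}\sigma^s=C\e_*^{q}r^{q+\mu}$, using $s>q+\mu\ge q$ and the identity $(\sigma/\e_*)^s=r^{q+\mu}$; both parts give $C(1+\e_*^{1-m})\e_*^qr^{q+\mu}$, and expanding $1+\e_*^{1-m}$ produces the two terms of~(\ref{app-s-9}). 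None of this chain is carried out in your proposal, so the estimate is not actually derived; moreover your heuristics are partly backwards: the two terms do not come from ``bulk versus concentration'' but from the factor $1+\e_*^{1-m}$ (which cannot be dropped since $\e_*$ may be large here), and for $j\ge j_*$ the Yomdin factor $1+\e_j^{1-m}$ is merely \emph{bounded} by $1+\e_*^{1-m}$ --- the smallness of the high strata comes from the Choquet sum, not from re-covering superlevel sets by subcubes.

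The outer Calder\'on--Zygmund stopping-time decomposition you propose is both superfluous and leaky. Superfluous, because the theorem is asserted for an \emph{arbitrary} cube $Q$ with no smallness of $\e_*$ (the paper stresses one cannot assume $\e_*<1$), and the per-cube bound you want on each stopping cube $q_i$ is just the theorem itself applied to $q_i$; the concluding H\"older summation over the $q_i$ is not part of this single-cube estimate at all --- it reappears later as Lemma~\ref{cor00-sl} in Subsection~\ref{HTFSF}. Leaky, because the claim that the points not covered by maximal stopping cubes lie in $A_v$ is unjustified: failure of $\ell^{-\tau_*/p}\|\M g\|_{L_p(Q(x,\ell))}\to0$ only expresses positive lower $\tau_*$-density of the measure $(\M g)^p\,dx$ at $x$, and such points need not be nondifferentiability points of $v$ nor points where $\M\nabla v=\infty$; since $Z'_v$ may meet this exceptional set, you would owe a separate argument (say, via its finite $\H^{\tau_*}$-measure, $q+\mu>\tau_*$, and a local H\"older bound on $v$) to dismiss it, which you do not give. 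In short: right toolbox, but the central multiscale estimate that constitutes the paper's proof is missing, and the added reduction introduces an unproven step.
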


\begin{proof}
 Let the assumptions in the beginning of this subsection are fulfilled. Fix an $n$-dimensional  interval $Q\subset\R^n$  of size $r=\ell(Q)$. Take
 the polynomial $P$ of degree~$k$ from Lemma~\ref{SCE-1} such that
\begin{equation}
\label{s-uk1} |\nabla v_Q(x)|\le C\int\limits_{Q}\frac{\M g(y)}{|x-y|^{n-k-\al+1}}\,dy\ \qquad\forall x\in Q,
\end{equation}
where $v_Q=v-P$. 

 Put
\begin{equation}
\label{sc2-14'}\e_*=\|\M g\|_{L_p(Q)}\,r^{k+\al-1-\frac{n}p}.
\end{equation}
We emphasize that now we could not assume that $\e_*$ is small since the exponent~${k+\al-1-\frac{n}p}$ is
{\it strictly less} than zero.

Denote
$$
\sigma=\|\M g\|_{\LL_p(Q)},
$$
and for each $j\in \mathbb Z$ define
$$
E_j=\bigl\{x\in Q: \M_Q |\nabla v_{Q}|(x)\in(2^{j-
1},2^{j}]\bigr\} \quad \mbox{ and } \quad
\delta_j=\H^{q+\mu}_\infty(E_j),
$$
where $\M_Q$ is the modified Hardy--Littlewood maximal function  defined by formula~(\ref{lip-m}).
Put
$$s=\frac{(q+\mu)p}{n-(k+\al-1)p},\qquad\tau=\frac{s}p\bigl(n-(k+\al-1)p\bigr)=q+\mu.$$
From these definitions it follows, in particular, that
\begin{equation}
\label{e-s-1} \biggl(\frac\sigma{\e_*}\biggr)^s=r^{q+\mu},
\end{equation}
further, since $n<(k+\al)p$, we have
\begin{equation}
\label{e-s-1'} s>q+\mu,
\end{equation}
moreover, since by Theorem assumption $q+\mu>n-(k+\al-1)p$, we have
\begin{equation}
\label{e-s-1''} s>p.
\end{equation}
Then by Theorem~\ref{AM1} (applied for the case $\beta=(k+\al-1)$\,),
\begin{equation}
\label{tr-d1} \sum\limits_{j=-\infty}^\infty \delta_j2^{js}\leq
C\sigma^s
\end{equation}
for a constant $C$ depending on $n,d,\beta,\tau,s$ only. By the definition
of the~Hausdorff measure, for each $j\in\mathbb Z$ there exists a
family of balls $B_{ij}\subset\R^n$ of radii $r_{ij}$ such that
\begin{equation}
\label{res1} E_j\subset \bigcup\limits_{i=1}^\infty B_{ij} \mbox{\
\ \ and\ \ \ }\sum\limits_{i=1}^\infty r^{{q+\mu}}_{ij}\le c\,\delta_j.
\end{equation}

Denote
$$
Z_j=Z'_v\cap E_j \quad \mbox{ and } \quad Z_{ij}=Z_j\cap B_{ij}.
$$
By construction $Z'_v\cap Q=\bigcup_{j}Z_{j}$ \ and \
$Z_j=\bigcup_{i}Z_{ij}$.

Take an integer value $j_*$ such that $\e_*\in(2^{j_*-1},2^{j_*}]$. Denote
$Z_*=\bigcup_{j< j_*}Z_{j}$, \ $Z_{**}=\bigcup_{j\ge j_*}Z_{j}$.
Then by construction
$$
Z'_v\cap Q=Z_*\cup Z_{**},\quad Z_*\subset \{x\in Z'_v\cap Q:\M_Q
|\nabla v_{Q}|)(x)<\e_*\}.
$$
Further, since  $\nabla P_{Q}(x)=\nabla v(x)-\nabla v_{Q}(x)$, $|\nabla
v_Q(x)|\le 2^j$, $|\nabla v(x)|\leq 1$, and
$\lambda_m(v,x)=0$ for $x\in Z_{ij}$ , we have\footnote{Here we
use the following elementary fact: for any linear maps $L_1 \colon
\R^n\to\R^d$ and $L_2 \colon \R^n\to\R^d$ the estimates
$\lambda_l(L_2+L_2)\le \lambda_l(L_1)+\|L_2\|$ hold for all
$l=1,\dots,d$, see, e.g., \cite[Proposition 2.5 (ii)]{Yom}.}
$$
Z_{ij}\subset \bigl\{x\in B_{ij}: \lambda_1(P_{Q},x)\le
1+2^{j},\dots,\lambda_{m}(P_{Q},x)\le 1+2^{j},\
\lambda_{m+1}(P_{Q},x)\le 2^{j}\bigr\}.
$$
From definition  of $E_j$ and from Lemma~\ref{l-lip} (applying to the function~$v_Q$\,)  we have
$$|v_Q(x)-v_Q(y)|\le C_M2^jr_{ij}\qquad\forall x,y\in Z_{ij}.$$
From this fact, applying Theorem~\ref{lb8} to polynomial
$P_{Q}$ with $B=B_{ij}$ and
$\e=\e_j=2^{j}$, we find a finite family of balls
$T_\nu\subset\R^d$, $\nu=1,\dots,\nu_j$ with $\nu_j \leq
C_{Y}(1+\e_j^{-m})$, each of radius $(1+C_{M})\e_jr_{ij}$,  such
that
$$
\bigcup\limits_{\nu=1}^{\nu_j}T_\nu \supset v(Z_{ij}).
$$
Therefore, for every $j\ge  j_*$ we have
\begin{equation}\label{Iom1}
\Phi(Z_{ij})\le C_1
\nu_j\e_j^{q}r^{q+\mu}_{ij}=C_2(1+\e_j^{-m})2^{jq}r_{ij}^{q+\mu}\le
C_2(1+\e_*^{-m})2^{jq}r_{ij}^{q+\mu},
\end{equation}
where all the constants $C_\mu$ above depend on $n,m,k,d$ only.
By the same reasons, but this time applying Theorem~\ref{lb8} and
Lemma~\ref{l-lip} with $\e=\e_*$ and instead of the balls $B_{ij}$
taking the big  ball $B\supset Q$ with radius $\sqrt{n}r$, we have
\begin{equation}
\Phi(Z_{*})\le C_3(1+\e_*^{-m})\e_*^q r^{q+\mu}.\label{Iom2}
\end{equation}
The right hand side of the last formula, from the definition of ~$\e_*=\sigma\,r^{k+\al-1-\frac{n}p}$, is equivalent to the right hand side of the required
estimate~(\ref{app-s-9}).

From (\ref{Iom1}) we get immediately
\begin{eqnarray}\label{Iom2'}
\Phi(Z_{**})\le C_2(1+\e_*^{-m})\sum\limits_{j\ge  j_*}\sum\limits_i2^{jq}r_{ij}^{q+\mu} \overset{(\ref{res1})}\leq C_3(1+\e_*^{-m})\sum\limits_{j\ge  j_*}
2^{jq}\delta_j\nonumber\\ \!\!\!\!\!\!\!\!\!\!\!\!\!\!\!\!\!\!\!\!\!\!\!\!\!\!\!\!\overset{\footnotesize(\ref{e-s-1'})}\le C_3(1+\e_*^{-m}) 2^{j_*(q-s)}\sum\limits_{j\ge  j_*}
2^{js}\delta_j\overset{\footnotesize(\ref{tr-d1})}\le  C_4(1+\e_*^{-m}) \e_*^{(q-s)}\sigma^s \overset{\footnotesize(\ref{e-s-1})}\le C_5
(1+\e_*^{-m}) \e_*^{q}r^{q+\mu}.
\end{eqnarray}
The right hand side of the last formula, by the same reasons (see the commentary after~(\ref{Iom2}) \,)
is equivalent to  the right hand side of the required
estimate~(\ref{app-s-9}).

Thus from~(\ref{Iom2})--(\ref{Iom2'}) the required estimate~(\ref{app-s-9}) follows. The Lemma is proved.
\end{proof}

\subsection{Estimates on cubes for Sobolev--Lorentz classes of mappings: the general case \,$(k+\al)p\ge n$.  }\label{SC-III}

Fix \,$k\ge1$, \,$0\le\al< 1$, $1<p<\infty$, \,and
\,$v\in \Lll(\R^n,\R^d)$. In this subsection we consider  the case, when $k+\al>1$ and
\begin{equation}\label{app-sl-1} (k+\al)p\ge n,
\end{equation}
i.e., when $v$ is  a continuous function (see, e.g., \cite{KK3}\,), but the gradient $\nabla v$ could be discontinuous in general (if $(k+\al-1)p<n$\,).

\begin{ttt}\label{Sl-est-3}{\sl
 Under above assumptions, there exists a~function $h\in L_{p,1}(\R^n)$ (depending on~$v$\,) such that the following statements are fulfilled:

 \begin{itemize}
\item[(i)] \,if  $(k+\al-1)p\ge n$, then gradient $\nabla v$ is continuous and uniformly bounded function, and
for any sufficiently small
$n$-dimensional interval $Q\subset\R^n$ the estimate
\begin{equation}
\label{t-sl-2}\Phi(Z'_v\cap Q)\leq C\,\sigma^{q-m}r^{q+\mu+(k+\al-1-\frac{n}p)(q-m)}
\end{equation}
holds, where again \begin{equation}
\label{sl-app-s-10} r=\ell(Q),\qquad\sigma=\|h\|_{L_{p,1}(Q)}.
\end{equation}
and  the constant $C$ depends on $n,m,k,\alpha,d,p$ only.

\item[(ii)] \,if  $n-p\le(k+\al-1)p< n$, then under additional assumption
 \begin{equation}\label{sl-as}q+\mu\ge\tt:=n-(k+\al-1)p
\end{equation}for any
$n$-dimensional interval $Q\subset\R^n$ the estimate
\begin{equation}
\label{sl-app-s-9}\Phi(Z'_v\cap Q)\leq C\,\biggl(\,\sigma^q r^{(k+\al-\frac{n}p)q+\mu}+\sigma^{q-m}r^{q+\mu+(k+\al-1-\frac{n}p)(q-m)}\biggr)
\end{equation}
holds with the same $\sigma,r$.
\end{itemize}}
\end{ttt}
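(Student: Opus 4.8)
The plan is to obtain Theorem~\ref{Sl-est-3} as the Lorentz-norm refinement of Theorems~\ref{Sob-est-1} and~\ref{Sob-est-2}: I would repeat those two proofs almost verbatim, the only structural changes being (a) carrying Lorentz norms $\|\cdot\|_{L_{p,1}}$ through every estimate in place of the Lebesgue norms $\|\cdot\|_{L_p}$, and (b) replacing the Choquet-type inequality of Theorem~\ref{AM1} (which fails at the endpoint $s=p$) by its Lorentz analogue Theorem~\ref{lb7}, which is designed precisely for $s=p$ and is exactly what permits the limiting exponent $(k+\al)p=n$ (equivalently the endpoint $q+\mu=\tt$) to be admitted. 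Since for $p>1$ the Hardy--Littlewood maximal operator is bounded on $L_{p,1}(\R^n)$ and $\|f\|_{L_p}\le\|f\|_{L_{p,1}}$, the function $h$ will again be $h=\M g$ for a suitable convolution-kernel representation of $v$, and this is the $h\in L_{p,1}(\R^n)$ whose existence is asserted; note also that the Lorentz norm has absolutely continuous norm for $p<\infty$, so $\|h\|_{L_{p,1}(Q)}\to0$ as $\ell(Q)\to0$.

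For part~(i), with $(k+\al-1)p\ge n$: first $\nabla v\in\Le^{k+\al-1}_{p,1}$, hence $\nabla v$ is continuous and bounded, by the usual Sobolev embedding when $(k+\al-1)p>n$ and by the Lorentz-refined embedding $\Le^{k+\al-1}_{p,1}\hookrightarrow C$ when $(k+\al-1)p=n$ (the same endpoint embedding used throughout the paper). I would then run the order-reduction trick from the proof of Theorem~\ref{Sob-est-1}: set $l=\min\{i\in\Z_+:k+\al-n-2<i\}$, $\kk=k-l$, so that $\kk+\al<n+2$ and $(\kk+\al-1)p\ge n$; write $u=\nabla^l v=G_{\kk+\al}\ast g$ with $g\in L_{p,1}(\R^n)$ and put $h=\M g\in L_{p,1}(\R^n)$. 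From Theorem~\ref{SCE-1} (estimate~(\ref{pol-est-1})) one has $|\nabla u_Q(x)|\le C\,I_{\kk+\al-1}(h\,1_Q)(x)$ on $Q$; estimating this Riesz-type integral by O'Neil's (Lorentz--H\"older) inequality instead of plain H\"older -- which is exactly what handles the endpoint $(\kk+\al-1)p=n$, where $|x-\cdot|^{-(n-\kk-\al+1)}1_Q$ lies in weak $L^{p'}(Q)$ with quasinorm bounded independently of $\ell(Q)$ -- gives $\sup_Q|\nabla u_Q|\le C\,\|h\|_{L_{p,1}(Q)}\,r^{\kk+\al-1-n/p}$, hence $\sup_Q|\nabla v_Q|\le Cr^l\sup_Q|\nabla u_Q|\le C\sigma\,r^{k+\al-1-n/p}=:\e$ with $\sigma=\|h\|_{L_{p,1}(Q)}$ and $r=\ell(Q)$. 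For $\ell(Q)$ small one has $\e<1$ (because $\sigma\le1$ and, when the exponent $k+\al-1-n/p$ is positive, also $r\le1$). From this point the argument is word for word the final part of the proof of Theorem~\ref{Hold-est-1}: take the degree-$k$ polynomial $P$ with $v_Q=v-P$, apply Yomdin's entropy estimate (Theorem~\ref{lb8}) together with $\diam v_Q(Q)\le\e r$ to cover $v(Z'_v\cap Q)$ by $\le C(1+\e^{1-m})$ balls of radius $\sim\e r$, and conclude $\Phi(Z'_v\cap Q)\le C(1+\e^{1-m})\e^q r^{q+\mu}\le C\e^{q-m+1}r^{q+\mu}$, which on unwinding $\e$ is the asserted~(\ref{t-sl-2}).

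For part~(ii), with $n-p\le(k+\al-1)p<n$: write $\nabla^k v=G_\al\ast g$, $g\in L_{p,1}(\R^n)$, put $h=\M g\in L_{p,1}(\R^n)$, $\sigma=\|h\|_{L_{p,1}(Q)}$, and take the degree-$k$ polynomial $P=P_Q$ from Theorem~\ref{SCE-1} so that $|\nabla v_Q(x)|\le C\,I_{k+\al-1}(h\,1_Q)(x)$ on $Q$. I would then copy the proof of Theorem~\ref{Sob-est-2} line by line: set $\e_*=\sigma\,r^{k+\al-1-n/p}$ (now possibly large, since the exponent is negative), dyadically split $Q$ into the level sets $E_j$ of $\M_Q|\nabla v_Q|$ with $\delta_j=\H^{q+\mu}_\infty(E_j)$, and apply a Choquet-type bound with $\beta=k+\al-1$ and $\tau=q+\mu$. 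The one genuine novelty occurs here: when $q+\mu>\tt$ one takes $s=\frac{(q+\mu)p}{\tt}>p$ and uses Theorem~\ref{AM1} (bounding $\|h\|_{L_p(Q)}\le\|h\|_{L_{p,1}(Q)}=\sigma$) to get $\sum_j\delta_j2^{js}\le C\sigma^s$; when $q+\mu=\tt$ (the endpoint now admitted) one takes $s=p$ and uses the Lorentz Choquet estimate Theorem~\ref{lb7} -- whose hypotheses $\beta>0$, $n-\beta p>0$, $p>1$ read here $k+\al-1>0$, $(k+\al-1)p<n$, $p>1$, all valid -- to get $\sum_j\delta_j2^{jp}\le C\sigma^p$. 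In either case $\sum_j\delta_j2^{js}\le C\sigma^s$ and $(\sigma/\e_*)^s=r^{q+\mu}$, $s\ge q$, so the rest of the proof of Theorem~\ref{Sob-est-2} goes through unchanged: one splits $Z'_v\cap Q=Z_*\cup Z_{**}$ at the level $j_*$ with $\e_*\in(2^{j_*-1},2^{j_*}]$, applies Theorem~\ref{lb8} and the Lipschitz-type Lemma~\ref{l-lip} on each ball $B_{ij}$ for $Z_{**}$ and on the enlarged ball $B\supset Q$ for $Z_*$, and reproduces the chains~(\ref{Iom1})--(\ref{Iom2'}) to arrive at $\Phi(Z_*)+\Phi(Z_{**})\le C(1+\e_*^{1-m})\e_*^q r^{q+\mu}$, which on substituting $\e_*=\sigma r^{k+\al-1-n/p}$ is exactly the right-hand side of~(\ref{sl-app-s-9}).

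The main obstacle -- really the only non-mechanical point -- is the bookkeeping at the two borderline exponents. In~(i) I must verify that $\M$ maps $L_{p,1}$ into $L_{p,1}$ for $p>1$ and that O'Neil's inequality produces the factor $r^{\kk+\al-1-n/p}$ with the correct power of $r$ (in particular $r^0$ at the endpoint $(\kk+\al-1)p=n$); in~(ii) I must make sure the parameters fit the hypotheses of Theorem~\ref{lb7}, namely $\beta=k+\al-1$, $\tau=q+\mu=\tt=n-\beta p$, $s=p$, $p>1$ -- which they do, since $n-\beta p=\tt>0$ is part of the standing assumption of this subsection. The supadditivity of Lorentz norms over disjoint sets is not needed for the single-cube estimate itself; it enters only when this estimate is summed over a family of cubes, in Subsection~\ref{DTSC3}.
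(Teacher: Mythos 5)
Your proposal is correct and follows essentially the same route as the paper's own proof: order reduction via $l$ and $\kk$, taking $h=\M g$ with the $L_{p,1}$-boundedness of the maximal operator, the single-cube estimate of Theorem~\ref{SCE-1} combined with the generalized (O'Neil) H\"older inequality at the endpoint $(k+\al-1)p=n$, the Yomdin--Lipschitz covering argument, and the substitution of Theorem~\ref{lb7} for Theorem~\ref{AM1} precisely when $s=p$, i.e.\ $q+\mu=\tau_*$. The only cosmetic difference is that in part~(i) with $(k+\al-1)p>n$ the paper simply cites Theorem~\ref{Sob-est-1} together with $\|f\|_{L_p(Q)}\le\|f\|_{L_{p,1}(Q)}$, whereas you rederive that case directly from the O'Neil estimate; both are fine.
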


\begin{rem}\label{rem-sl-1}
Formally estimates in Theorem~\ref{Sl-est-3} are the same as in Theorems~\ref{Sob-est-1}--\ref{Sob-est-2}, the only difference  is in the definition of~$\sigma$ (using
the~Lorentz norm instead of Lebesgue one, cf. formulas~(\ref{app-s-7}) and (\ref{sl-app-s-10})\,).
However, Theorem~\ref{Sl-est-3} is `stronger' in  a~sense than the previous Theorems~\ref{Sob-est-1}--\ref{Sob-est-2}. Namely, there are
 some important (limiting) cases, which are not covered by Theorem~\ref{Sob-est-2}, but one could still apply the Theorem~\ref{Sl-est-3} for these cases.
It happens for the following values of the parameters:
  \begin{equation}\label{sl-par1}(k+\al)p=n,
\end{equation}
or \begin{equation}\label{sl-par2}(k+\al-1)p=n,
\end{equation}
or
\begin{equation}\label{sl-par3}q+\mu=\tt.
\end{equation}
It means, that the Lorentz norm is a sharper and more accurate tool here than the Lebesgue norm.
\end{rem}

\begin{proof}[Proof of Theorem~\ref{Sl-est-3}]
As in the previous subsection~\ref{SC-I} put $l=\min\{i\in\Z_+:k+\al-n-2<i\}$. In particular, $l\ge0$. Denote $\kk=k-l$ and $u=\nabla^l v$. Then
by construction
\begin{equation}\label{sll-imp} u\in\Le^\kk_p(\R^n),\qquad \kk+\al-n-2< 0,
\end{equation}
moreover,
\begin{equation}\label{sll-app-shhc1} (\kk+\al-1)p\ge n \qquad\mbox{ if }\quad (k+\al-1)p\ge n
\end{equation}
(see the above  discussion around~(\ref{s-imp})--(\ref{app-shhc1})\,). \ From the inclusion~$u\in\Le^{\kk+\al}_{p,1}(\R^n,\R^d)$  it follows that $u=\GG_{\kk+\alpha}(g):=G_{\kk+\al}*g$ for some $g\in L_{p}(\R^n)$.
We will use the Hardy--Littlewood maximal function $\M g$.  Recall, that by properties of Lorentz spaces, the standard
estimate
\begin{equation*}\label{lll-base2}
 \|\M g\|_{L_{p,1}}\le
C\,\|g\|_{L_{p,1}}\end{equation*}
holds for the considered case  $1<p<\infty$  (see, e.g.,  \cite[Theorem 4.4]{Maly2}\,). Put $h=\M g$.

The proof of Theorem \ref{Sl-est-3}  is very similar to that one of Theorems~\ref{Sob-est-1}--\ref{Sob-est-2}: the main differences concern the limiting cases
(\ref{sl-par1})--(\ref{sl-par3}) mentioned above.

\begin{itemize}
\item{\sc Case $(k+\alpha-1)p >n$.} The required assertion in~(i) follows immediately from Theorem~\ref{Sob-est-1} and from  the well-known inequality
$$\|f\|_{L_{p}(Q)}\le\|f\|_{L_{p,1}(Q)}.$$

\item{\sc Case $(k+\alpha-1)p =n$.} \  Then by above notations $l=0$ and $v=u=G_{k+\al}*g$. Fix an $n$-dimensional  interval $Q\subset\R^n$  of size~$r=\ell(Q)$.
 Take
 the polynomial $P$ of degree~$k$ from Theorem~\ref{SCE-1} such that
\begin{equation}
\label{sll-uk1} |\nabla v_Q(x)|\le C\int\limits_{Q}\frac{\M g(y)}{|x-y|^{n-k-\al+1}}\,dy\ \qquad\forall x\in Q,
\end{equation}
where $v_Q=v-P$. \ Put $\sigma=\|\M g\|_{L_{p,1}(Q)}$.

Put $\beta=(k+\al-1)$. From the generalised Holder inequality for Lorentz norms
$$\int\limits_Q\frac{\M g(y)}{|y-x|^{n-\beta}}\,dy\le \|\M g\|_{L_{p,1}(Q)}\cdot\biggl\|\frac{1_Q}{|\cdot-x|^{n-\beta}}
\biggr\|_{L_{\frac{p}{p-1},\infty}}= C\, \|g\|_{L_{p,1}}$$
(see, e.g., \cite[Theorem~3.7]{Maly2}\,) and from (\ref{sll-uk1})  it follows immediately that
\begin{equation}
\label{s2-uk3} \sup\limits_{x\in Q}|\nabla v_Q(x)|\le C\,\|\M g\|_{L_{p,1}(Q)}= C\,\sigma.
\end{equation}
Denote the right hand side of the last inequality by~$\e$. Of course, taking $r=\ell(Q)$ sufficiently small, we could assume without loss of generality that
$\e<1$.
Thus to finish the prove for this case
we could repeat almost word by word the last part of the proof of Theorem~\ref{Hold-est-1} to obtain the inequality
\begin{equation}
\label{fsl1}\Phi(Z'_v\cap Q)\leq C\,\sigma^{q-m}r^{q+\mu},
\end{equation}
which is equivalent to the required estimate~(\ref{t-sl-2}) for the considered values of~$k,\al,p$.

\item{\sc Case $n-p\le(k+\alpha-1)p <n$.} \  Then again $l=0$, $u=v$,  and for this last case the required assertion~(ii) can be proved repeating almost "word by word" the same arguments as in the previous
Theorem~\ref{Sob-est-2} with
the following evident modification:  now it is possible that $s=p$ (respectively, $q+\mu=\tt$\,), and for this situation  one has to apply Theorem~\ref{lb7} 
instead of previous Theorem~\ref{AM1} (where $s>p$\,).
\end{itemize}
\end{proof}

\subsection{Estimates on cubes for Sobolev classes of mappings $W^k_1(\R^n)$, \ $k\ge n$.  }\label{SC-IIII}
As usual, by $W^k_p(\R^n,\R^d)$ we denote the space of functions $v:\R^n\to\R^d$ such that all its generalized derivatives
$D^jv$,  \,$j=1,\dots,k$ \ up to order~$k$ belong to the space~$L_p$. It is well known that
$$W^k_p(\R^n)=\Le^{k}_{p}(\R^n)$$ for $p>1$. In this subsection we consider the limiting case $p=1$ for Sobolev spaces $W^k_1$.
It is well known that functions from the Sobolev space $W^k_1(\R^n,\R^d)$ are continuous if
\begin{equation}
\label{cs-1}k\ge n,
\end{equation}
so we assume this condition below:
fix $k\ge n$, \,and
\,$v\in W^k_1(\R^n,\R^d)$.

Denote again $Z_{v,m}=\{x\in\R^n\setminus A_v:\rank\nabla v(x)\le m\}$. Here $A_v$ means the set of `bad' points, at which either the function~$v$
is not differentiable or which are not Lebesgue points for~$\nabla v$.
Recall, that
the set~$A_v$ is small in a sense: the equality
\begin{equation}\label{csc1}\H^{\tau_*}(A_v)=0\qquad\mbox{ for }\ \ \tt:=n-k+1
\end{equation}
holds (see, e.g., \cite{BKK2}\,); in particular, $A_v=\emptyset$ \ if \ $k-1\ge n$.

As in the previous section, it is convenient (and sufficient for our purposes) to restrict our attention on the following subset of critical points
\begin{equation}\label{csZ'-}Z'_v=\{x\in Z_{v,m}:|\nabla v(x)|\le1\}.
\end{equation}
Take also a positive parameter $q\ge m$  and nonnegative~$\mu\ge0$.

\begin{ttt}\label{Sl-est-4}{\sl
 Under above assumptions, the following statements hold:

 \begin{itemize}
\item[(i)] \,if  $k-1\ge n$, then the gradient $\nabla v$ is continuous and uniformly bounded function, and
for any sufficiently small
$n$-dimensional interval $Q\subset\R^n$ the estimate
\begin{equation}
\label{t-cs-2}\Phi(Z'_v\cap Q)\leq C\,\sigma^{q-m}r^{q+\mu+(k-1-n)(q-m)}
\end{equation}
holds, where again \begin{equation}
\label{cs-app-s-10} r=\ell(Q),\qquad\sigma=\|\nabla^kv\|_{L_{1}(Q)}.
\end{equation}
and  the constant $C$ depends on $n,m,k,d$ only.

\item[(ii)] \,if  $k=n$, then under additional assumption
 \begin{equation}\label{cs-as}q+\mu\ge 1
\end{equation}for any
$n$-dimensional interval $Q\subset\R^n$ the estimate
\begin{equation}
\label{cs-app-s-9}\Phi(Z'_v\cap Q)\leq C\,\biggl(\,\sigma^q r^{\mu}+\sigma^{q-m}r^{\mu+m}\biggr),
\end{equation}
holds with the same $r,\sigma$, and with $C$ depending on $n,m,k,d$ only.
\end{itemize}}
\end{ttt}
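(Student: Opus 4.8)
The plan is to prove the two parts exactly as their fractional--Sobolev counterparts (Theorems~\ref{Sob-est-1} and~\ref{Sob-est-2}) were proved, replacing the potential estimates of Lemma~\ref{SCE-1}/Remark~\ref{rem-SCE-1} by the polynomial approximation Lemma~\ref{lb3}, and the Choquet--Adams inequality Theorem~\ref{AM1} by its $p=1$ analogue Theorem~\ref{cs-lb7}. As always we use the local nature of the statement to take $Q$ a small cube with $r=\ell(Q)$, and we work with the subset $Z'_v$ of~(\ref{csZ'-}).

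For part~(i), $k-1\ge n$, the argument is that of Theorem~\ref{Sob-est-1} (hence of Theorem~\ref{Hold-est-1}). With $P=P_{Q,k-1}[v]$ the second estimate of Lemma~\ref{lb3} (valid since $k\ge n+1$) gives $\|\nabla(v-P)\|_{L_\infty(Q)}\le Cr^{k-1-n}\sigma=:\e$, where $\sigma=\|\nabla^kv\|_{L_1(Q)}$. If $\e\ge1$, (\ref{t-cs-2}) is trivial (bound $\Phi(Z'_v\cap Q)$ by $\Phi(Q)\le(\diam Q)^{\mu+q}$ and use $\e\ge1$, $q\ge m-1$). If $\e<1$, then on $Z'_v$ we have $\lambda_m(\nabla v,x)=0$, $|\nabla v(x)|\le1$, $|\nabla(v-P)(x)|\le\e$, so $Z'_v\cap Q\subset\{x:\lambda_1(P,x)\le1+\e,\dots,\lambda_{m-1}(P,x)\le1+\e,\ \lambda_m(P,x)\le\e\}$; applying Yomdin's Theorem~\ref{lb8} to $P$ (of degree $\le k$) with the Lipschitz bound $\diam(v-P)(Q)\le C\e r$ produces $\lesssim\e^{1-m}$ balls of radius $\sim\e r$ covering $v(Z'_v\cap Q)$, whence $\Phi(Z'_v\cap Q)\le C\e^{1-m}(\e r)^qr^\mu=C\e^{q-m+1}r^{q+\mu}$, which is~(\ref{t-cs-2}) upon substituting $\e=Cr^{k-1-n}\sigma$.

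For part~(ii), $k=n$, the gradient need not be bounded and we follow the proof of Theorem~\ref{Sob-est-2}. By Lemma~\ref{lb3} extend $v_Q=v-P_{Q,n-1}[v]$ from $Q$ to $\R^n$ with $\|\nabla^nv_Q\|_{L_1(\R^n)}\le C_0\sigma$, $\sigma=\|\nabla^nv\|_{L_1(Q)}$, and set $\e_*=\sigma r^{-1}$ (the $p=1$, $k=n$ value of $\sigma r^{k+\al-1-n/p}$). Split $Z'_v\cap Q$ along the dyadic level sets $E_j=\{x\in Q:\M_Q|\nabla v_Q|(x)\in(2^{j-1},2^j]\}$ into $Z_*=\bigcup_{2^j\le\e_*}(Z'_v\cap E_j)$ and $Z_{**}=\bigcup_{2^j>\e_*}(Z'_v\cap E_j)$. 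On $Z_*$, where $\M_Q|\nabla v_Q|<\e_*$, Lemma~\ref{l-lip} and Theorem~\ref{lb8} (with $\e=\e_*$ on a ball $B\supset Q$, applied to $P_{Q,n-1}[v]$) give $\Phi(Z_*)\le C(1+\e_*^{1-m})\e_*^qr^{q+\mu}=C(\sigma^qr^\mu+\sigma^{q-m+1}r^{\mu+m-1})$, i.e.\ the right side of~(\ref{cs-app-s-9}). For $Z_{**}$, cover each $E_j$ by balls $B_{ij}$ of radii $r_{ij}$ with $\sum_ir_{ij}^{q+\mu}\le c\,\H^{q+\mu}_\infty(E_j)=:c\,\delta_j$, use Lemma~\ref{l-lip} and Theorem~\ref{lb8} on each $B_{ij}$ with $\e=2^j$ to get $\Phi(Z'_v\cap E_j)\le C(1+2^{j(1-m)})2^{jq}\delta_j$, and sum over $j\ge\log_2\e_*$: since $q-(q+\mu)\le0$ one estimates $\sum_{j\ge\log_2\e_*}2^{jq}\delta_j\le\e_*^{q-(q+\mu)}\sum_j2^{j(q+\mu)}\delta_j$, and the Choquet--Adams bound $\sum_j2^{j(q+\mu)}\delta_j\le C\sigma^{q+\mu}$ together with $\e_*^{-(q+\mu)}\sigma^{q+\mu}=r^{q+\mu}$ again yields $\Phi(Z_{**})\le C(1+\e_*^{1-m})\e_*^qr^{q+\mu}$, completing~(\ref{cs-app-s-9}).

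The main obstacle is the Choquet--Adams inequality at the endpoint integrability $p=1$ with exponent $s=q+\mu$: Theorem~\ref{cs-lb7} as recalled is the case $s=1$ (equivalently $\tau=n-k+l$), while the summation above needs $\sum_j2^{js}\H^s_\infty(E_j)\le C\|\nabla^nv_Q\|_{L_1(\R^n)}^s$ with $s=q+\mu$, i.e.\ the general-exponent refinement $\int_0^\infty\H^{s(n-k+l)}_\infty(\{\M(\nabla^lf)\ge t^{1/s}\})\,dt\le C\|\nabla^kf\|_{L_1}^s$ for $f\in W^k_1(\R^n)$, $l<k$ (here $l=1$, so $\tau=s$), which holds in the admissible range $1\le s\le n$ — this is again due to D.~R.~Adams~\cite{Ad2}, Theorem~\ref{cs-lb7} being its $s=1$ instance. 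One must check one stays in this range, i.e.\ $q+\mu\le n$; for $\mu=\mu_q$ this follows from $q\le\b$, since $q+\mu_q=(n-1)(m-q)+1<n$. It is exactly this endpoint character of the $p=1$ estimate that forces the non-strict hypothesis $q+\mu\ge\tt=n-k+1=1$ (contrasting with the strict inequality in Theorem~\ref{Sob-est-2}), just as the Lorentz refinement does in Theorem~\ref{Sl-est-3}(ii). The remaining points — separating the two terms of~(\ref{cs-app-s-9}) as in~(\ref{Iom2}) and~(\ref{Iom2'}), and the harmless $\M$ versus $\M_Q$ bookkeeping via Lemma~\ref{l-lip} — are a verbatim transcription of the fractional case.
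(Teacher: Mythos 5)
Your proposal is correct and follows essentially the same route as the paper: part (i) is Lemma~\ref{lb3} combined with Yomdin's Theorem~\ref{lb8} exactly as in the proof of Theorem~\ref{Hold-est-1}, and part (ii) reruns the dyadic decomposition of Theorem~\ref{Sob-est-2} with the $p=1$ Choquet-type input in place of Theorem~\ref{AM1}. Two small remarks. First, the ``general-exponent refinement'' you invoke needs neither a separate citation nor the restriction $s\le n$: for $s=q+\mu\ge1$ it follows from the paper's own Theorem~\ref{cs-lb7}, i.e.\ from (\ref{cs-cb3}), by the elementary inequalities $\H^{s}_\infty(E)\le\bigl(\H^{1}_\infty(E)\bigr)^{s}$ and $\sum_j a_j^{s}\le\bigl(\sum_j a_j\bigr)^{s}$ applied with $a_j=2^{j}\H^{1}_\infty(E_j)$; this is precisely where the hypothesis (\ref{cs-as}) enters, it covers every $\mu\ge0$ with $q+\mu\ge1$ (so your verification $q+\mu_q<n$, which would tie the proof to the special choice $\mu=\mu_q$, is not needed), and it is the natural reading of the paper's terse instruction to use (\ref{cs-cb3}) instead of Theorem~\ref{AM1}. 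Second, in part (i) the branch $\e\ge1$ is superfluous and, as written, not quite justified: the estimate (\ref{t-cs-2}) is claimed only for sufficiently small $Q$, and since $\sigma=\|\nabla^k v\|_{L_1(Q)}\to0$ uniformly as $\ell(Q)\to0$ by absolute continuity of the integral, one has $\e<1$ automatically; your trivial bound for $\e\ge1$ tacitly uses $\diam v(Q)\le\diam Q$, which is not available here because $|\nabla v|\le1$ is assumed only on the set $Z'_v$ of (\ref{csZ'-}), not on all of $Q$.
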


\begin{rem}\label{rem-cs-1}
Estimates in Theorem~\ref{Sl-est-4} are very close to the estimates in \ref{Sl-est-3}
(formally, we could obtain these estimates, taking $\alpha=0$ and $p=1$  in Theorem~ \ref{Sl-est-3} and using
the corresponding another definition for~$\sigma$,
cf. (\ref{sl-app-s-10}) and (\ref{cs-app-s-10})\,). But in Theorem~\ref{Sl-est-4} we could not use the maximal function as before: it is well known, that in general
$$\M\nabla^kv\notin L_1(\R^n).$$
\end{rem}

\begin{proof}[Proof of Theorem~\ref{Sl-est-4}]
 Let the assumptions in the beginning of this subsection are fulfilled. Fix an $n$-dimensional  interval $Q\subset\R^n$. Take the approximating polynomial~$P=P_Q$ from the~subsection~\ref{asfp} and denote $v_Q=v-P$, \ $r=\ell(Q)$. Then we have
\begin{equation*}
\label{cs-1--} \bigl\|v_Q\bigr\|_{L_\infty(Q)}\le C\,r^{k-n}\,\|
\nabla^{k} v\|_{\LL_{1}(Q)};
\end{equation*}
\begin{equation}
\label{pr1} \bigl\|\nabla
v_Q\bigr\|_{L_\infty(Q)}\le C\,r^{k-1-n}\,\|
\nabla^{k} v\|_{\LL_{1}(Q)}\qquad\mbox{\rm if \ $k\ge n+1$},
\end{equation}
 where  $C$ is a constant depending on $n,d,k$ only.
Moreover, the mapping $v_{Q}$ can
be extended from~$Q$ to the entire $\R^n$ such that the
extension (denoted again) $v_{Q}\in\WW^{k}_{1}(\R^n,\R^d)$ and
\begin{equation}
\label{sc-1'} \|\nabla^{k} v_{Q}\|_{\LL_{1}(\R^n)}\le C_0
\|\nabla^{k} v\|_{\LL_{1}(Q)},
\end{equation}
where $C_0$ also depends on $n,d,k$ only.

The rest of the proof of Theorem   is very similar to that one of Theorems~\ref{Sob-est-1}--\ref{Sl-est-3}. Consider two different cases.

\begin{itemize}
\item{\sc Case $k>n$.} Then from~(\ref{pr1})  it follows immediately that
\begin{equation}
\label{cs-uk3}\bigl\|\nabla
v_Q\bigr\|_{L_\infty(Q)}\le C\,\sigma r^{k-1-n},
\end{equation}
where $\sigma=\|\nabla^{k} v\|_{\LL_{1}(Q)}$.
Denote the right hand side of~(\ref{cs-uk3}) by $\e$.
Now to finish the prove for this case
we could repeat almost word by word the last part of the proof of Theorem~\ref{Hold-est-1} to obtain the required estimate~(\ref{t-cs-2}).

\item{\sc Case $k=n$.} \  Now from Theorem~\ref{cs-lb7} and (\ref{sc-1'}) we have
\begin{equation}
\label{cs-cb3}
\int_0^\infty\H^{1}_\infty( \{x\in Q : \M \bigl(\nabla
v_Q\bigr)(x)\ge t \})\,\dd t \le C\,\Vert \nabla^k
v\Vert_{\LL_{1}(Q)}=C\,\sigma.
\end{equation}
Then for this last case the required assertion~(ii) can be proved repeating almost "word by word" the same arguments as in the previous
Theorem~\ref{Sob-est-2} with many obvious simplifications (now we have parameters $\al=0$, $s=p=1$, etc.), and with
the following evident modification:  one has to apply formula~(\ref{cs-cb3})
instead of previous Theorem~\ref{AM1}.
\end{itemize}
\end{proof}

\noindent Dipartimento di Matematica e Fisica Universit\`a degli studi della
Campania "Luigi Vanvitelli," viale Lincoln 5,
81100, Caserta, Italy\\
e-mail: {\it Adele.FERONE@unicampania.it}

\bigskip

\noindent School of Mathematical Sciences
Fudan University, Shanghai 200433, China,
and Voronezh State University, Universitetskaya pl. 1,
Voronezh, 394018, Russia\\
e-mail: {\it korob@math.nsc.ru}

\bigskip

\noindent Dipartimento di Matematica e Fisica Universit\`a degli studi della
Campania "Luigi Vanvitelli," viale Lincoln 5,
81100, Caserta, Italy\\
e-mail: {\it alba.roviello@unicampania.it}

\end{document}

To simplify the notation, let us make the following agreement: for
$\alpha=0$ we identify~$C^{k,\alpha}$ with usual spaces of
$C^k$-smooth mappings

In view of the wide range of applicability of the above results it
is a natural and compelling problem to extend the results to the classes of Sobolev mappings.

In the recent paper \cite{HKK} by Haj\l{}asz P., Korobkov M.V., and Kristensen J.
for $k\le n$ and for Sobolev
classes $W^k_p(\R^n,\R^d)$ it was  proved a~\textit{bridge theorem}
that includes all the above results as particular cases (see below
Theorem~\ref{DFT-F}-(ii)\,). In the present paper we extend this result
for the Holder classes $\Cc$ and for Sobolev spaces
$W^k_p(\R^n,\R^d)$ with arbitrary integer~$k\ge1$, and also for
fractional Sobolev spaces $\Le^{k+\al}_p$ and $B^{k+\al}_{p,s}$
(e.g., for Bessel potential spaces and Besov spaces respectively).

The integrability assumptions here are very minimal and sharp, they are of kind $p(k+\al)\ge n$, i.e., they
guarantee in general only
{\it the continuity} (not everywhere differentiability) of
a~mapping. However, we prove that the 'bad' set of nondifferentiability points of Sobolev mappings is fortunately
negligible in the above bridge theorem because of some Luzin type $N$--properties with respect to lower dimensional
Hausdorff measures established in our previous papers~\cite{BKK2,FKR-n,KK15}.

Let us note, in the conclusion, that the Morse--Sard theorem for the Sobolev spaces was very fruitful in mathematical fluid mechanics, in particular,
it was used in the recent  solution of the so-called Leray's problem for the steady Navier--Stokes system (see~\cite{KPR}\,).